\newcommand{\FF}{{\mathcal  F}}
\newcommand{\TT}{{\mathcal  T}}
\def\XXint#1#2#3{{\setbox0=\hbox{$#1{#2#3}{\int}$ }
\vcenter{\hbox{$#2#3$ }}\kern-.6\wd0}}
\newtheorem{theorem}{\bf Theorem}[section]
\newtheorem{proposition}[theorem]{\bf Proposition}
\newtheorem{lemma}[theorem]{\bf Lemma}
\newtheorem{corollary}[theorem]{\bf Corollary}
\theoremstyle{definition}
\newtheorem{definition}[theorem]{Definition}
\newtheorem{remark}[theorem]{Remark}
\numberwithin{equation}{section}
\begin{document}

\title[The existence result for  BSDEs with two optional Doob's class barriers]{Nonlinear BSDEs with two optional Doob's class barriers satisfying weak Mokobodzki's condition and  extended Dynkin games}

\maketitle
\begin{center}
 \normalsize
  TOMASZ KLIMSIAK\footnote{e-mail: {\tt tomas@mat.umk.pl}}\textsuperscript{1,2} \,\,\,
  MAURYCY RZYMOWSKI\footnote{e-mail: {\tt maurycyrzymowski@mat.umk.pl}}\textsuperscript{2}   \par \bigskip
  \textsuperscript{\tiny 1} {\tiny Institute of Mathematics, Polish Academy of Sciences,\\
 \'{S}niadeckich 8,   00-656 Warsaw, Poland} \par \medskip
 
  \textsuperscript{\tiny 2} {\tiny Faculty of
Mathematics and Computer Science, Nicolaus Copernicus University,\\
Chopina 12/18, 87-100 Toru\'n, Poland }\par
\end{center}

\begin{abstract}
We study reflected backward stochastic differential equations (RBSDEs) on the probability space equipped with a Brownian motion. 
The main novelty of the paper lies in the fact that we consider the following weak assumptions on the data: barriers are  optional 
of class (D) satisfying weak Mokobodzki’s condition,   generator is   continuous and non-increasing with respect to the value-variable 
(no restrictions on the growth) and Lipschitz continuous with respect to the control-variable, and the terminal condition and the generator 
at zero are supposed to be merely integrable. We prove that under these conditions on the data there exists a  solution to corresponding  RBSDE. 
In the second part of the paper, we  apply the theory of RBSDEs to solve basic problems  in  
Dynkin games driven by  nonlinear expectation based on the generator mentioned above. We prove that the 
main component of a solution to RBSDE represents  the value process in corresponding extended nonlinear Dynkin game.   
Moreover, we provide sufficient conditions on the barriers guaranteeing the existence of the value for nonlinear Dynkin games and the existence of a saddle point.
\end{abstract}
\maketitle


\section{Introduction}
\label{wstep}
Let $B$ be a standard $d$-dimensional Brownian motion on a given probability space $(\Omega,\FF,P)$,
$T$ be a strictly positive real number (horizon time) and let $\mathbb{F}:=(\FF_t)_{0\le t\le T}$
be the standard augmentation of the  filtration generated by $B$. In the present paper, we study Reflected Backward Stochastic Differential Equations (RBSDEs for short) of the following form
\begin{equation}\label{2marca1}
\begin{cases}
Y_t=\xi+\int^T_t f(r,Y_r,Z_r)\,dr+R_T-R_t-\int^T_t Z_r\,dB_r,\quad t\in[0,T],\\
L_t\le Y_t\le U_t,\quad t\in[0,T],
\end{cases}
\end{equation}
where $\xi$ (terminal value) is an $\mathcal{F}_T$-measurable random variable, the mapping 
$f:\Omega\times[0,T]\times\mathbb{R}\times\mathbb{R}^d\to\mathbb{R}$ (generator) 
is an $\mathbb{F}$-progressively measurable process with respect to the first two variables and 
$L,U$ (barries) are $\mathbb{F}$-optional processes of class (D). We look for a triple $(Y,Z,R)$ of 
$\mathbb{F}$-progressively measurable processes, with $R$ of finite variation and $R_0=0$, that satisfies \eqref{2marca1}. 
Given a solution  $(Y,Z,R)$ to \eqref{2marca1}, we call the process $Y$   the {\em main part} of the solution.
The role of $R$ is to keep $Y$ between barriers $L,U$,
and the role of $Z$ is to keep $Y$ adapted to $\mathbb F$.  In order to get the uniqueness for problem \eqref{2marca1} one requires  from
$R$ to satisfy the so called {\em minimality condition} which stands that
\begin{equation}
\label{eq.2903}
\begin{split}
&\int^T_0(Y_{r-}-\limsup_{s\uparrow r}L_s)\,dR^{*,+}_r+\int^T_0(\liminf_{s\uparrow r}U_s-Y_{r-})\,dR^{*,-}_r=0\\&
\sum_{0\le r<T}(Y_r-L_r)\max\{R_{r+}-R_r,0\}+\sum_{0\le r<T}(U_r-Y_r)\max\{R_r-R_{r+},0\}=0,
\end{split}
\end{equation}
where $R^*$ is the c\`adl\`ag part of $R$ and $R^{*,+}, R^{*,-}$ its Jordan decomposition.

\medskip
{\bf  Formulation of the problems.} In the  paper, we merely assume that
\begin{enumerate}
\item[(A1)] $\mathbb E|\xi|+\mathbb E\int_0^T|f(r,0,0)|\,dr<\infty$,
\item[(A2)] there is $\lambda\ge0$ such that
$|f(t,y,z)-f(t,y,z')|\le\lambda|z-z'|$ for $t\in[0,T]$, $y\in\mathbb{R}$, $z,z'\in\mathbb{R}^d$,
\item[(A3)]  $y\mapsto f(t,y,z)$ in non-increasing and continuous  for fixed  $t\in[0,T]$, $z\in\mathbb{R}^d$,
\item[(A4)] $\int^T_0 |f(r,y,0)|\,dr<\infty$ for every $y\in\mathbb{R}$,
\item[(Z)] there exist  $\gamma\ge 0$, $\kappa\in [0,1)$ and  a non-negative  $\mathbb F$-progressively measurable process $g$,
satisfying $\mathbb E\int_0^Tg_r\,dr<\infty$, such that
\begin{align*}
|f(t,y,z)-f(t,y,0)|\le\gamma(g_t+|y|+|z|)^\kappa,\quad t\in [0,T], \,y\in\mathbb{R},\,z\in\mathbb{R}^d.
\end{align*}
\end{enumerate}
By \cite{bdh}, under (A1)--(A4),(Z), there exists a solution $(Y,Z)$  to  \eqref{2marca1} without barriers (BSDE), i.e. 
\begin{equation}
\label{2marca1pr}
Y_t=\xi+\int^T_t f(r,Y_r,Z_r)\,dr-\int^T_t Z_r\,dB_r,\quad t\in[0,T].
\end{equation}
Moreover, it is unique provided $Y$ is of class (D) and $Z\in \mathcal H^s_{\mathbb F}(0,T)$ - the class of $\mathbb F$-progressively measurable 
processes satisfying $\mathbb E[\int_0^T|Z_r|^2\,dr]^{s/2}<\infty$ - 
for some $s>\kappa$. In the present  paper, we focus on the existence and uniqueness problem for \eqref{2marca1}--\eqref{eq.2903}
under conditions (A1)--(A4),(Z).  We  shall also study the representation of the process $Y$ as the  value process in nonlinear Dynkin games.

\medskip
{\bf The existence problem.} First, observe that by the very definition of a solution to \eqref{2marca1} its {\em main part}
is a semimartingale. Consequently, we deduce at once, that the existence of  a semimartingale between the barriers $L, U$
is a necessary  condition for the existence of a solution to \eqref{2marca1} (intrinsic condition).  The said condition is known in the literature 
as {\em weak Mokobodzki's condition} (see \cite{HH1}):
\vspace*{0.15cm}
\begin{enumerate}
\item[(WM)] there exists a semimartingale $X$ such that $L_t\le X_t\le U_t,\, t\in [0,T]$.  
\end{enumerate}
\vspace*{0.15cm}
It is a natural question whether under (A1)--(A4), (Z) the  above condition is also sufficient for the existence of a solution to \eqref{2marca1}--\eqref{eq.2903}.
We give a positive answer to this question, and prove even more, that condition (Z) can be dropped. 

\vspace*{0.10cm}
\begin{center}
\begin{minipage}[c][1,15cm][t]{0,85\textwidth}
{\em 
\textbf{Theorem 1.} Assume that \textnormal{(A1)--(A4)} hold and weak Mokobodzki's condition \textnormal{(WM)} is in force.
Then there exist a solution $(Y,Z,R)$ to \eqref{2marca1}--\eqref{eq.2903}.
}
\end{minipage}
\end{center}
It appears, and it may seem  surprising at first,  that   the  above result does not hold for  BSDEs \eqref{2marca1pr} (see Remark \ref{remark.z}).
The explanation of this phenomenon  is that in the case of reflected BSDEs barriers keep the {\em main part} of a solution   
in the class (D).  At this point it is worth mentioning that the following condition (complete separation)
\begin{equation}
\label{cond1}
L,U\text{ are c\`adl\`ag,}\quad L_t< U_t,\,\, t\in [0,T],\quad L_{t-}<U_{t-},\,\, t\in (0,T]
\end{equation}
implies (WM) (see \cite[Lemma 3.1]{Topolewski}). 
We provide a generalization of this condition, by dropping c\`adl\`ag regularity assumption  on $L,U$, and we prove that
the following condition implies (WM):
\begin{equation}
\label{cond2}
L,U\text{ are left-limited,}\quad  L_{t-}< U_{t-},\quad \limsup_{s\downarrow t} L_s< \liminf_{s\downarrow t}U_s,\quad t\in [0,T].
\end{equation}

\medskip
{\bf The uniqueness problem.} An interesting issue is also the problem of the uniqueness
for solutions to \eqref{2marca1}--\eqref{eq.2903}. In the proof of the uniqueness for   BSDEs \eqref{2marca1pr} (see \cite{bdh} and Theorem \ref{12sierpnia1})
the crucial roles 
were played by condition (Z)
and the fact that for any solution $(Y,Z)$  to \eqref{2marca1pr} we have, under conditions (A1)--(A4),(Z), that  $Z\in \mathcal H_{\mathbb F}^s(0,T)$ for some $s>\kappa$
provided $Y$ is of class (D). For reflected BSDEs this property does not hold
even if $f\equiv 0$ (see \cite[Example 5.6]{kl2}). Nevertheless, we are able to prove the following result.
\begin{center}
\begin{minipage}[c][0,9cm][t]{0,85\textwidth}
{\em 
\textbf{Theorem 2.} Assume that (A1)--(A4),(Z) are in force.
Then there exists at most one  solution  to RBSDE \eqref{2marca1}--\eqref{eq.2903}. 
}
\end{minipage}
\end{center}

{\bf Solutions to RBSDEs  as  value processes in Dynkin games.} 
The above theorem is a consequence of a much deeper result, which is our third main result of the paper. 
In order to  formulate it, we use the notion of the nonlinear expectation introduced by Peng in \cite{Peng}. 
For given stopping times $\alpha\le \beta\le T$ consider mapping
\[
\mathbb{E}^{f}_{\alpha,\beta}:L^1(\mathcal{F}_{\beta})\to L^1(\mathcal{F}_{\alpha}),
\]
by letting $\mathbb{E}^{f}_{\alpha,\beta}\xi:=Y^\beta_\alpha$, where $(Y^\beta,Z^\beta)$ is a (unique) solution  
to \eqref{2marca1pr}, with $T$ replaced by $\beta$, such that $Y^\beta$ is of class (D). 
For given stopping times $\tau,\sigma\le T$ and sets $H\in\mathcal F_\tau, G\in\mathcal F_\sigma$, we let
\begin{equation*}
\begin{split}
J(\tau,H;\sigma, G)&:=(L_{\tau}\mathbf1_H+\limsup_{h\searrow 0}L_{\tau+h}\mathbf1_{H^c})\mathbf{1}_{\{\tau \le\sigma<T\}}+
(U_{\sigma}\mathbf1_G\\
&\quad+\liminf_{h\searrow 0}U_{\sigma+h}\mathbf1_{G^c})\mathbf{1}_{\{\sigma<\tau\}}+\xi\mathbf{1}_{\{\tau=\sigma=T\}},
\end{split}
\end{equation*}
with the convention that $L_t=L_{t\wedge T}, U_t:=U_{t\wedge T},\, t\ge 0$.
We prove the following representation theorem.
\vspace*{0.10cm}
\begin{center}
\begin{minipage}[c][2,55cm][t]{0,85\textwidth}
{\em
\textbf{Theorem 3.} Assume that \textnormal{(A1)--(A4), (Z)} are in force.
If $(Y,Z,R)$ is a solution to \textnormal{RBSDE} \eqref{2marca1}--\eqref{eq.2903}, then 
\begin{equation*}
\begin{split}
Y_\theta&=\mathop{\mathrm{ess\,inf}}_{\sigma\ge \theta,  G\in\mathcal F_\sigma}\mathop{\mathrm{ess\,sup}}_{\tau\ge \theta,H\in\mathcal{F}_{\tau}}\mathbb{E}^f_{\theta,\tau\wedge\sigma}J(\tau,H;\sigma,G)\\
&=\mathop{\mathrm{ess\,sup}}_{\tau\ge\theta, H\in\mathcal{F}_{\tau}}\mathop{\mathrm{ess\,inf}}_{\sigma\ge\theta, G\in\mathcal{F}_{\sigma}}\mathbb{E}^f_{\theta,\tau\wedge\sigma}J(\tau,H;\sigma,G)
\end{split}
\end{equation*}
for any stopping time $\theta\le T$.
}
\end{minipage}
\end{center}
\vspace{1cm}
In other words, we show that $Y$ is a value process in an {\em extended nonlinear  Dynkin game};  "nonlinear" since we consider the 
nonlinear expectation,
and "extended" since players may 
change payoffs $L,U$ on  sets $H^c,G^c$, respectively, which extends the set of their strategies
(in the classical Dynkin games the players are not allowed to choose sets $G,H$). Observe that the above extended nonlinear Dynkin game 
reduces to the  nonlinear Dynkin game provided $L$ and $U$ are right-continuous. We prove however a stronger result.
\vspace*{0.20cm}
\begin{center}
\begin{minipage}[c][2,55cm][t]{0,85\textwidth}
{\em 
\textbf{Theorem 4.} Assume that \textnormal{(A1)--(A4), (Z)} are in force.
Moreover, suppose  that  $L$ is right upper semicontinuous and $U$ is right lower semicontinuous.
If $(Y,Z,R)$ is a solution to \textnormal{RBSDE} \eqref{2marca1}--\eqref{eq.2903}, then 
\begin{equation*}
\begin{split}
Y_\theta&=\mathop{\mathrm{ess\,inf}}_{\sigma\ge \theta}\mathop{\mathrm{ess\,sup}}_{\tau\ge \theta}\mathbb{E}^f_{\theta,\tau\wedge\sigma}J(\tau,\Omega;\sigma,\Omega)\\
&=\mathop{\mathrm{ess\,sup}}_{\tau\ge\theta}\mathop{\mathrm{ess\,inf}}_{\sigma\ge\theta}\mathbb{E}^f_{\theta,\tau\wedge\sigma}J(\tau,\Omega;\sigma,\Omega).
\end{split}
\end{equation*}
for any stopping time $\theta\le T$.
}
\end{minipage}
\end{center}
\vspace{1.5cm}
Thus,  $Y$ represents  the  value process in a  nonlinear  Dynkin game provided $L, U$ are sufficiently  regular as mentioned above. The above result was achieved by Bayraktar and Yao  in \cite{BY} for  continuous barriers $L,U$ satisfying \eqref{cond1} and under the following additional conditions: $\mathbb E\sup_{t\le T}|L_t|+\mathbb E\sup_{t\le T}|U_t|<\infty$,   generator $f$ admits  the linear
growth with respect to $Y$-variable, i.e.   $|f(t,y,0)|\le g_t+\psi|y|$ for some $\psi\ge 0$. Note that in the present paper growth of $f$ 
with respect to $Y$-variable is subject to no restriction. 

Finally, we show that further regularity assumptions on barriers $L,U$ allow one to indicate saddle points for nonlinear Dynkin games.
For    any stopping time $\theta\le T$ set:
\begin{equation}\label{dyn14intr}
\tau^*_{\theta}:=\inf\{t\ge\theta,\,Y_t=L_t\}\wedge T;\quad\sigma^*_{\theta}:=\inf\{t\ge\theta,\,Y_t=U_t\}\wedge T
\end{equation}
and
\begin{equation}\label{dyn15intr}
\begin{split}
&\bar{\tau}_{\theta}:=\inf\{t\ge\theta,\,R^{+}_t>R^{+}_{\theta}\}\wedge T;\quad\bar{\sigma}_{\theta}:=\inf\{t\ge\theta,\,R^{-}_t>R^{-}_{\theta}\}\wedge T.
\end{split}
\end{equation}

\vspace*{0.20cm}
\begin{center}
\begin{minipage}[c][2,65cm][t]{0,85\textwidth}
{\em 
\textbf{Theorem 5.} Assume that \textnormal{(A1)--(A4), (Z)} are in force.
Moreover, suppose  that  $L$ is  upper semicontinuous and $U$ is  lower semicontinuous.
Then
\begin{equation*}
\begin{split}
\mathbb{E}^f_{\theta,\tau^*_\theta\wedge\sigma^*_\theta}J(\tau^*_\theta,\Omega;\sigma^*_\theta,\Omega)&=
\mathbb{E}^f_{\theta,\bar\tau_\theta\wedge\bar\sigma_\theta}J(\bar\tau_\theta,\Omega;\bar\sigma_\theta,\Omega)\\
&=\mathop{\mathrm{ess\,inf}}_{\sigma\ge \theta}\mathop{\mathrm{ess\,sup}}_{\tau\ge \theta}\mathbb{E}^f_{\theta,\tau\wedge\sigma}J(\tau,\Omega;\sigma,\Omega).
\end{split}
\end{equation*}
for any stopping time $\theta\le T$.
}
\end{minipage}
\end{center}

\vspace{1cm}
{\bf Proof techniques and relations of main results  to the existing literature.}  First, note that 
Mokobodzki's condition, additionally to  (WM),  requires from   the semimartingale $X$, lying between the barriers,
some integrability of its   finite variation and  martingale part
(depending on the authors you may find different integrability conditions for  the process $X$, nonetheless  it  is always  assumed  to be at least  the  difference 
of positive supermartingales).
This additional requirement is the reason why the complete separation  condition \eqref{cond1} does not imply  Mokobodzki's condition.
The fact that \eqref{cond1} implies {\em weak Mokobodzki's condition} is an easy calculation and may be found e.g. in \cite{Topolewski} and \cite{HH1} (in the case of continuous barriers).
Reflected BSDEs  with condition \eqref{cond1} imposed on the barriers have been  considered in many papers
(see \cite{BY,BL,EHW,HH,HHO,HHd,Hassairi,Topolewski}). In the papers \cite{BL}, \cite{HH} and \cite{HHO}, $L^2$-data and sublinear growth of the generator with respect to $Y$-variable
are required. In \cite{HHd} authors considered bounded data and  continuous generator with  quadratic-growth  with respect to $Z$-variable. 
Some results for RBSDEs with generators subject to sublinear growth   with respect to $Y$-variable are described also in  \cite{BY,Hassairi} ($L^1$-data) and \cite{EHW} ($L^p$-data, $p\in(1,2)$). $L^1$-data and generator being merely monotone and continuous with respect to $Y$-variable were considered in \cite{Topolewski}. 

In all the mentioned  papers (besides \cite{HHd}), the method of local solutions
and pasting local solutions, introduced by Hamad\`ene and Hassani in \cite{HH}, has been applied to achieve  the existence for underlying RBSDEs. 
This method is rather complicated,  and this is perhaps the reason why the development of theory of RBSDEs with barriers satisfying complete 
separation condition   is far from being satisfactory. The second drawback of the method is that it is based on the penalization
scheme which is not available for RBSDEs with  optional barriers.
 In \cite{kl}, the  author proposed a different method which applies to
RBSDEs with barriers satisfying even more general  than \eqref{cond1} {\em weak Mokobodzki's condition} (WM).
We call this method  {\em localization procedure}.  The advantage of the method is its simplicity and wide applicability.  
The method is based on the following simple observation: for any   {\em chain} $(\tau_k)$, i.e.  non-decreasing 
sequence of stopping times satisfying
\[
P(\tau_k<T,\, k\ge 1)=0,
\]
we have
\begin{equation*}
\begin{split}
&(Y,Z,R)\, \text{ solves  RBSDE}^T(\xi,f,L,U)\\
&\quad\quad\quad\quad\quad\quad\quad\text{iff}\\
&(Y,Z,R)\, \text{ solves  RBSDE}^{\tau_k}(Y_{\tau_k},f,L,U),\, \,k\ge 1.
\end{split}
\end{equation*}
The   method consists of finding a proper regular approximation $(Y^n)$, on the whole interval $[0,T]$, of a potential solution $Y$
of a given problem (by "proper" we mean an approximation which does not blow up when passing to the  limit). 
The terms of approximating sequence may solve  BSDEs or RBSDEs of the generic form
\[
Y^n_t=\xi_n+\int_t^Tf_n(r,Y^n_r,Z^n_r)\,dr+R^n_T-R^n_t-\int_t^TZ^n_r\,dB_r\quad t\in [0,T],
\]
with suitable chosen $\xi_n,f_n,R^n$.  In the first step one shows that $(Y^n)$  converges to a process $Y$.
After   that, we show that  $Y$ is the {\em main part} of a solution to RBSDE$^{\tau_k}(Y_{\tau_k},f,L,U)$ for each $k\ge 1$.
Since $(\tau_k)$ is a {\em chain}, we conclude  that  $Y$ is the {\em main part} of a solution to  RBSDE$^T(\xi,f,L,U)$ .

By using {\em localization procedure} in \cite{kl}, the first author of the present paper was able to provide an existence result for 
RBSDEs with merely c\`adl\`ag barriers of class (D) satisfying (WM), $L^1$-data,  and  generator being
continuous and non-increasing with respect to $Y$-variable (with no restrictions on the growth of the generator with respect to $Y$-variable).   

As far as we know the only papers in the literature concerned with  RBSDEs of the form  \eqref{2marca1} with non-c\`adl\`ag barriers satisfying (WM) are \cite{KRz,MM0}.
In \cite{KRz} RBSDEs on a general filtered space are  studied under (WM) but with $f$ independent of $Z$-variable.
In The generalization of \eqref{cond1} to the case of l\`adl\`ag barriers was presented in \cite{MM0} the authors considered l\`adl\`ag barriers
and stochastic Lipschitz generator $f$ (on the Brownian-Poisson filtration).

As to the nonlinear Dynkin games, to the best of  our knowledge,  there are only  few papers  in the literature:  \cite{DQS,DQS2,DQS3,GIOQ2,GQ} - all with  $L^2$-data and Lipschitz generator -  and \cite{K7,KRz} - with $L^1$-data and continuous and monotone generator with respect to $Y$-variable and independent of $Z$-variable.


\medskip
{\bf Comments on the related literature.}
Reflected backward stochastic differential equations with two barriers have been  introduced by Cvitani\'{c} and Karatzas in \cite{CK} 
as a generalization of backward stochastic differential equations introduced by Pardoux and Peng in \cite{PP} (analogous results for one reflecting barrier, i.e. in  case $U\equiv\infty$, have been  presented for the first time by El Karoui et al. in \cite{EKPPQ}). 
In \cite{CK} the authors considered \eqref{2marca1} with barriers being continuous processes satisfying {\em  Mokobodzki's condition} i.e. there exists $L_t\le X_t\le U_t$, $t\in[0,T]$, such that $X=X^1-X^2$ and $X^i$ is a  positive supermartingale satisfying $\mathbb E\sup_{t\le T}|X^i|^2_t<\infty,\, i=1,2$. Moreover, they assumed that data are $L^2$-integrable (i.e. $\sup_{t\le T}|L_t|, \sup_{t\le T}|U_t|, |\xi|, \int_0^T|f(r,0,0)|\,dr$
have second moments) and $f$ is  Lipschitz continuous with respect to $(Y,Z)$-variable (uniformly in $(\omega,t)$). Under these assumptions a solution to \eqref{2marca1}
has been defined in \cite{CK} as a  triple $(Y,Z,R)$  of $\mathbb F$-progressively measurable processes such that $Y$ is  continuous,  and $R$ is a continuous  finite variation process, with $R_0=0$, satisfying the minimality condition of the form
\[
\int^T_0(Y_r-L_r)\,dR^+_r=\int^T_0(U_r-Y_r)\,dR^-_r=0,
\]
where $R=R^+-R^-$ is the Jordan decomposition of $R$. Observe that with continuous  $Y,L,U,R$, condition  \eqref{eq.2903} reduces to  the above condition.

BSDEs and Reflected BSDEs are of great interest to scientists because of their numerous applications in various fields of mathematics and problems (e.g.  partial differential equations, integro-differential equations,  variational inequalities, optimization theory, control theory, mathematical finance etc., see \cite{Crepey,PR,Zhang} and the references therein). 
Over the past two decades, many interesting results have been obtained regarding RSBDEs.
In particular, numerous existence results for RBSDEs, which strengthen the result  of  \cite{CK}  by weakening  assumptions on generator $f$, filtration $\mathbb{F}$, barriers $L,U$ and horizon time $T$, have been provided.

Despite of intensive research, until  2016, only RBSDEs with c\`adl\`ag barriers were  considered  in the literature. 
With the work by Grigorova et al. in \cite{GIOOQ} there  was a change in this regard and papers on less regular barriers began to appear.
Equations of that type with $L^2$-data and Lipschitz generator were studied in \cite{MM} (Brownian filtration), in \cite{GIOOQ,GIOQ2} (Brownian-Poisson filtration) and \cite{BO,BO2,GIOQ} (general filtration). RBSDEs with optional barriers and $L^1$-data were considered only in \cite{KRzS,KRzS2}, in the case of Brownian filtration. Results on optional barriers, $L^1$-data and possibly infinite horizon time were presented in \cite{KRz} but with $f$ independent of $Z$-variable.
The case of $L^2$-data and $f$ being stochastic Lipschitz driver was presented in \cite{MM2} (Brownian-Poisson filtration) and in \cite{Marzogue,MM0} (general filtration).

\section{Basic notation}\label{roz2}

We say that a function $y:[0,T]\to\mathbb{R}^d$ is regulated on $[0,T]$ if for any $t\in[0,T)$, there exists the limit $y_{t+}:=\lim_{u\downarrow t}y_u$ and for any $s\in(0,T]$ there exists the limit $y_{s-}:=\lim_{u\uparrow s}y_u$. For any regulated function $y$ on $[0,T]$ we define $\Delta^+y_t:=y_{t+}-y_t$, $t\in[0,T)$ and $\Delta^-y_s:=y_s-y_{s-}$, $s\in(0,T]$.

For $x\in\mathbb{R}^d$ by $|x|$ we denote the euclidean norm. As mentioned in Section \ref{wstep}, $\mathcal{T}$ stands for the set of all stopping times taking values in $[0,T]$. What is more, for $\alpha,\beta\in\mathcal{T}$, $\mathcal{T}_{\alpha,\beta}:=\{\tau\in\mathcal{T},\,\alpha\le\tau\le\beta\}$, $\mathcal{T}_{\alpha}:=\mathcal{T}_{\alpha,T}$, $\mathcal{T}^{\beta}:=\mathcal{T}_{0,\beta}$.

Let $\alpha,\beta\in\mathcal{T}$, $\alpha\le\beta$, and $p\ge 1$. By $\mathcal{S}^p_{\mathbb{F}}(\alpha,\beta)$ we denote all $\mathbb{F}$-progessively measurable, $\mathbb{R}$-valued processes $Y=(Y_t)_{t\in[0,T]}$ such that
\[
||Y||_{\mathcal{S}^p_{\mathbb{F}}(\alpha,\beta)}:=\Big(\mathbb{E}\sup_{\alpha\le t\le\beta}|Y_t|^p\Big)^{\frac{1}{p}}<\infty.
\]
$\mathcal{M}_{loc}(\alpha,\beta)$ is the space of all $\mathbb{F}$-local martingales on $[[\alpha,\beta]]$. Let $q\ge 1$. By $L^{p,q}_{\mathbb{F}}(\alpha,\beta)$ we denote the set of all $\mathbb{F}$-progressively measurable, $\mathbb{R}$-valued processes $X=(X_t)_{t\in[0,T]}$ such that
\[
||X||_{L^{p,q}_{\mathbb{F}}(\alpha,\beta)}:=\Bigg(\mathbb{E}\Big(\int^{\beta}_{\alpha}|X_r|^p\,dr\Big)^{\frac{q}{p}}\Bigg)^{\frac{1}{p}}<\infty.
\]
$L^p_{\mathbb{F}}(\alpha,\beta)$ is a shorthand for $L^{p,p}_{\mathbb{F}}(\alpha,\beta)$.

Let $\mathcal{G}\subset\mathcal{F}$. $L^p(\mathcal{G})$ is the set of all $\mathcal{G}$-measurable random variables $X$ such that
\[
||X||_{L^p}:=\Big(\mathbb{E}|X|^p\Big)^{\frac{1}{p}}<\infty.
\]
By $\mathcal{H}_{\mathbb{F}}(\alpha,\beta)$, we denote the space of all $\mathbb{F}$-progessively measurable, $\mathbb{R}^d$-valued processes $Z=(Z_t)_{t\in[0,T]}$ such that
\[
\int^{\beta}_{\alpha}|Z_r|^2\,dr<\infty\quad P\mbox{-a.s.}
\]
$\mathcal{H}^s_{\mathbb{F}}(\alpha,\beta)$, $s>0$, is a  subspace of $\mathcal{H}_{\mathbb{F}}(\alpha,\beta)$ consisting of $Z$ satisfying
\[
\mathbb{E}\Big(\int^{\beta}_{\alpha}|Z_r|^2\,dr\Big)^{\frac{s}{2}}<\infty.
\] 
We say that $\mathbb{F}$-progessively measurable process $X=(X_t)_{t\in[0,T]}$ is of class (D) on $[[\alpha,\beta]]$ if the family $\{X_{\tau},\,\tau\in\mathcal{T}_{\alpha,\beta}\}$ is uniformly integrable. By $\mathcal{D}^2_{\mathbb{F}}(\alpha,\beta)$ we denote the set of all $\mathbb{F}$-progressively measurable, $\mathbb{R}$-valued processes $Y=(Y_t)_{t\in[0,T]}$ such that $|Y|^2$ is of class (D) on $[[\alpha,\beta]]$. We equip $\mathcal D^2_{\mathbb{F}}(\alpha,\beta)$ with the norm
\[
||Y||_{\mathcal{D}^2(\alpha,\beta)}:=\Big(\sup_{\sigma\in\mathcal{T}_{\alpha,\beta}}\mathbb E|Y_{\sigma}|^2\Big)^{\frac{1}{2}}.
\]

A sequence $(\tau_k)_{k\ge 1}\subset\mathcal{T}_{\alpha,\beta}$ is called a chain on $[[\alpha,\beta]]$ if
\[
\forall_{\omega\in\Omega}\exists_{n\in\mathbb{N}}\forall_{k\ge n}\,\tau_k(\omega)=\beta(\omega).
\]

By $\mathcal{V}_{\mathbb{F}}(\alpha,\beta)$ (resp. $\mathcal{V}^+_{\mathbb{F}}(\alpha,\beta)$) we denote a space of $\mathbb{F}$-progessively measurable, $\mathbb{R}$-valued processes $V=(V_t)_{t\in[0,T]}$ with finite variation (resp. nondecreasing) on $[[\alpha,\beta]]$ and $\mathcal{V}_{0,\mathbb{F}}(\alpha,\beta)$ (resp. $\mathcal{V}^+_{0,\mathbb{F}}(\alpha,\beta)$) is a subspace of  $\mathcal{V}_{\mathbb{F}}(\alpha,\beta)$ (resp. $\mathcal{V}^+_{\mathbb{F}}(\alpha,\beta)$) 
consisting of  processes $V$ such that $V_{\alpha}=0$. $\mathcal{V}^p_{\mathbb{F}}(\alpha,\beta)$ (resp. $\mathcal{V}^{+,p}_{\mathbb{F}}(\alpha,\beta)$) is the set of all $V\in\mathcal{V}_{\mathbb{F}}(\alpha,\beta)$ (resp. $V\in\mathcal{V}^+_{\mathbb{F}}(\alpha,\beta)$) such that $E|V|^p_{\alpha,\beta}<\infty$, where $|V|_{\alpha,\beta}$ denotes the total variation of $V$ on $[[\alpha,\beta]]$.

Let $V\in\mathcal{V}_{\mathbb{F}}(0,T)$. By $V^*$ we denote the c\`adl\`ag part of the process $V$, i.e.
\[
V^*_t=V_t-\sum_{0\le r<t}\Delta^+V_r.
\]

Throughout the paper all relations between random variables are supposed to hold $P$-a.s. For processes $X^1=(X^1_t)_{t\in[0,T]}$ and $X^2=(X^2_t)_{t\in[0,T]}$ we write $X^1\le X^2$ if $X^1_t\le X^2_t$, $t\in[0,T]$, $P$-a.s.

Let $V^1,V^2\in\mathcal{V}_{0,\mathbb{F}}(0,\tau)$. We write $dV^1\le dV^2$, if $dV^{1,*}\le dV^{2,*}$ and $\Delta^+V^1\le\Delta^+V^2$ on $[0,\tau]$. 

For an $\mathbb{F}$-optional process $X=(X_t)_{t\in[0,T]}$ we set  $\overrightarrow{X}_s=\limsup_{r\uparrow s}X_r$, $\underrightarrow{X}_s=\liminf_{r\uparrow s}X_r$, $s\in(0,T]$ and $\overleftarrow{X}_s=\limsup_{r\downarrow s}X_r$, $\underleftarrow{X}_s=\liminf_{r\downarrow s}X_r$, $s\in[0,T]$, $s\in[0,T)$.

\section{Backward SDEs}\label{BSDE}\label{roz3}

Let $p\ge 1$. We shall need the following hypotheses:
\begin{enumerate}
\item[(H1)] there is $\lambda\ge0$ such that
$|f(t,y,z)-f(t,y,z')|\le\lambda|z-z'|$ for $t\in[0,T]$, $y\in\mathbb{R}$, $z,z'\in\mathbb{R}^d$,
\item[(H2)] there is $\mu\in\mathbb{R}$ such that
$(y-y')(f(t,y,z)-f(t,y',z))\leq\mu(y-y')^2$ for $t\in[0,T]$, $y,y'\in\mathbb{R}$, $z\in\mathbb{R}^d$,
\item[(H3)] for every $(t,z)\in[0,T]\times\mathbb{R}^d$ the mapping $\mathbb{R}\ni y\rightarrow f(t,y,z)$ is continuous,
\item[(H4)] $\int^T_0 |f(r,y,0)|\,dr<\infty$ for every $y\in\mathbb{R}$,
\item[(H5)] $\xi\in L^p(\mathcal{F}_T)$, $f(\cdot,0,0)\in L^{1,p}_{\mathbb F}(0,T)$,
\end{enumerate}





Let $\alpha,\beta\in\mathcal{T}$, $\alpha\le\beta$, and $\hat{\xi}\in\mathcal{F}_{\beta}$. 

\begin{definition}
We say that a pair $(Y,Z)$ of $\mathbb{F}$-adapted processes is a solution to  backward stochastic differential equation
on the interval $[[\alpha,\beta]]$ with right-hand side $f$ and terminal value $\hat{\xi}$ (BSDE$^{\alpha,\beta}(\hat{\xi},f)$ for short) if
\begin{enumerate}
\item[(a)] $Y$ is a continuous process  and $Z\in\mathcal{H}_{\mathbb{F}}(\alpha,\beta)$,
\item[(b)] $\int^{\beta}_{\alpha}|f(r,Y_r,Z_r)|\,dr<\infty$,
\item[(c)] $Y_t=\hat{\xi}+\int^{\beta}_t f(r,Y_r,Z_r)\,dr-\int^{\beta}_t Z_r\,dB_r$, $t\in[\alpha,\beta]$.
\end{enumerate}
\end{definition}

Let $V\in\mathcal{V}_{0,\mathbb F}(\alpha,\beta)$. 

\begin{definition}
We say that a pair $(Y,Z)$ of $\mathbb{F}$-adapted processes is a solution to  backward stochastic differential equation
on the interval $[[\alpha,\beta]]$ with right-hand side $f+dV$ and terminal value $\hat{\xi}$ (BSDE$^{\alpha,\beta}(\hat{\xi},f+dV)$ for short) if
$(Y-V,Z)$ is a solution to BSDE$^{\alpha,\beta}(\hat{\xi},f_V)$, where $f_V(t,y,z)=f(t,y+V_t,z)$.
\end{definition}

Let us adopt the shorthand BSDE$^{\beta}:=$BSDE$^{0,\beta}$.

The following results follow from  \cite[Proposition 3.2, Theorem 4.2]{bdh}.

\begin{theorem}\label{5grudnia3}
 Assume that \textnormal{(H1)--(H4)} are in force. Suppose that \textnormal{(H5)} holds with  $p>1$. 
 Then the following assertions hold. 
 \begin{enumerate}
 \item[(i)] There exists a  solution $(Y,Z)\in\mathcal{S}^p_{\mathbb{F}}(0,T)\times \mathcal H^p_{\mathbb{F}}(0,T)$ to \textnormal{BSDE}$^T(\xi,f)$.
 \item[(ii)] There exists at most one solution  $(Y,Z)$ to \textnormal{BSDE}$^T(\xi,f)$ such that $Y\in \mathcal S^p_{\mathbb F}(0,T)$.
 \end{enumerate} 
\end{theorem}

\begin{proposition}\label{13stycznia19}
 Assume that \textnormal{(H5)}, with $p>1$, and \textnormal{(H1),(H2)} are satisfied. Let $(Y,Z)$ be a solution to \textnormal{BSDE}$^T(\xi,f)$ such that $Y\in\mathcal{S}^p_{\mathbb{F}}(0,T)$. Then there exists $c>0$, depending only on $\mu,\lambda,T,p$, such that
\begin{equation}
\label{13stycznia20}
\begin{split}
\mathbb E\Big[\sup_{0\le t\le T}|Y_t|^p+\Big(\int^T_0 |Z_r|^2\,dr\Big)^{\frac{p}{2}}&+\Big(\int^T_0 |f(r,Y_r,Z_r)|\,dr\Big)^p\Big]\\&
\le c\mathbb E\Big[|\xi|^p+\Big(\int^T_0 |f(r,0,0)|\,dr\Big)^p\Big].
\end{split}
\end{equation}
\end{proposition}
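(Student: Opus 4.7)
The proposition is a classical $L^p$ a priori estimate of Briand--Delyon--Hu--Pardoux--Stoica type for BSDEs whose generator is monotone in $y$ (H2) and Lipschitz in $z$ (H1); the argument combines It\^o's formula, Young's inequality, Burkholder--Davis--Gundy, and Gronwall's lemma.

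First I would apply It\^o's formula to $|Y_t|^p$. Since $x\mapsto|x|^p$ is not $C^2$ when $p\in(1,2)$, I would regularize by $(|Y_t|^2+\varepsilon)^{p/2}$ and pass to the limit $\varepsilon\downarrow0$. This produces the identity
\[
|Y_0|^p+\frac{p(p-1)}{2}\int_0^T|Y_s|^{p-2}|Z_s|^2\,ds=|\xi|^p+p\int_0^T|Y_s|^{p-2}Y_sf(s,Y_s,Z_s)\,ds-p\int_0^T|Y_s|^{p-2}Y_sZ_s\,dB_s,
\]
with the convention $|Y|^{p-2}Y:=0$ at $Y=0$. Combining (H2) with $y'=0$ and (H1), the drift integrand is bounded by $|Y|^{p-1}|f(s,0,0)|+\lambda|Y|^{p-1}|Z|+\mu|Y|^p$. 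Young's inequality on $\lambda|Y|^{p-1}|Z|$ absorbs a fraction of the $\tfrac{p(p-1)}{2}|Y|^{p-2}|Z|^2$ term on the left; BDG applied to the stochastic integral, followed by a further Young step, absorbs $\mathbb E\sup_t|Y_t|^p$. A Gronwall argument on the remaining $\int|Y|^p\,ds$ term, and a final Young inequality of the form $\mathbb E[\sup|Y|^{p-1}\int|f(s,0,0)|\,ds]\le\varepsilon\mathbb E\sup|Y|^p+c_\varepsilon\mathbb E(\int|f(s,0,0)|\,ds)^p$, yield
\[
\mathbb E\sup_{t\le T}|Y_t|^p+\mathbb E\int_0^T|Y_s|^{p-2}|Z_s|^2\,ds\le c\,\mathbb E\Big[|\xi|^p+\Big(\int_0^T|f(s,0,0)|\,ds\Big)^p\Big].
\]

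For $\mathbb E(\int|Z|^2)^{p/2}$ I would apply BDG to $\int_0^{\cdot}Z\,dB=Y-Y_0+\int_0^{\cdot}f\,ds$ (via the BSDE) when $p\ge 2$, and use the elementary bound $\int_0^T|Z|^2\,ds\le(\sup_t|Y_t|)^{2-p}\int_0^T|Y_s|^{p-2}|Z_s|^2\,ds$ combined with Young's inequality with exponents $\tfrac{2}{2-p},\tfrac{2}{p}$ when $p\in(1,2)$. For $\mathbb E(\int|f(s,Y,Z)|\,ds)^p$ I would decompose $|f(s,Y,Z)|\le|f(s,0,0)|+\lambda|Z|+|f(s,Y,0)-f(s,0,0)|$; the first two summands are already controlled, and the last is handled via Tanaka's formula for $|Y|$,
\[
|Y_T|+\int_0^T\mathrm{sign}(Y_s)f(s,Y_s,Z_s)\,ds=|Y_0|+\int_0^T\mathrm{sign}(Y_s)Z_s\,dB_s+L_T^0(Y),
\]
together with the one-sided bound $\mathrm{sign}(Y_s)(f(s,Y_s,Z_s)-f(s,0,Z_s))\le\mu|Y_s|$ from (H2) and the relation $\int_0^Tf\,ds=\xi-Y_0+\int_0^TZ\,dB$ coming from the BSDE itself.

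The hardest step will be controlling $\mathbb E(\int|f|)^p$. Condition (H2) is only a one-sided monotonicity in $y$, so $|f(s,y,0)-f(s,0,0)|$ cannot be bounded pointwise by a linear function of $|y|$. The role of the Tanaka expansion of $|Y|$ is to convert the one-sided information from (H2) into a two-sided bound on the integrated expression by exploiting the non-negativity of the local time $L_T^0(Y)$; the missing directional part is then recovered from the signed identity $\int_0^Tf\,ds=\xi-Y_0+\int_0^TZ\,dB$, so that $\mathbb E(\int|f|)^p$ is finally bounded by $\mathbb E\sup|Y|^p$, $\mathbb E(\int|Z|^2)^{p/2}$, and $\mathbb E(\int|f(s,0,0)|\,ds)^p$, all of which have already been estimated.
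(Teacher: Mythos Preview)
The paper does not supply its own proof of this proposition; it is stated immediately after Theorem~\ref{5grudnia3}, which is attributed to \cite{bdh}, and is clearly meant to be taken as a known a~priori estimate from that reference. Your outline is precisely the Briand--Delyon--Hu--Pardoux--Stoica argument and is correct.

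A couple of remarks on details. For the bound on $\mathbb{E}\big(\int_0^T|Z_r|^2\,dr\big)^{p/2}$ when $p\in(1,2)$, the inequality $\int|Z|^2\le(\sup|Y|)^{2-p}\int|Y|^{p-2}|Z|^2$ is delicate on $\{Y=0\}$; the cleaner route (also used in \cite{bdh}) is to apply It\^o to $|Y|^2$, raise the resulting pathwise bound on $\int|Z|^2$ to the power $p/2$, and then use BDG and Young's inequality. For the $\int|f|$ term, your decomposition and the Tanaka idea are right, but the argument is tighter than you describe: writing $F_s=f(s,Y_s,Z_s)$, one has $|F_s|=(\mathrm{sign}(Y_s)F_s)^{+}+(\mathrm{sign}(Y_s)F_s)^{-}$ on $\{Y_s\neq0\}$; the one-sided bound from (H2) controls $(\mathrm{sign}(Y_s)F_s)^{+}$ by $G_s:=\mu|Y_s|+|f(s,0,0)|+\lambda|Z_s|$, and Tanaka's identity for $|Y|$ (with $L^0\ge0$) then converts this into a bound on $\int(\mathrm{sign}(Y_s)F_s)^{-}\,ds$ as well. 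The plain BSDE identity $\int f\,ds=Y_0-\xi+\int Z\,dB$ is not actually needed for this step.
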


In case (H5) is satisfied with $p=1$, we  shall need for the existence and uniqueness of solutions to BSDEs  additional hypothesis.
\begin{enumerate}
\item[(Z)] There exists an $\mathbb{F}$-progressively measurable process $g\in L^1_{\mathbb F}(0,T)$ and $\gamma\ge 0$, $\kappa\in [0,1)$ such that
\begin{align*}
|f(t,y,z)-f(t,y,0)|\le\gamma(g_t+|y|+|z|)^\kappa,\quad t\in [0,T], \,y\in\mathbb{R},\,z\in\mathbb{R}^d.
\end{align*}
\end{enumerate}

\begin{remark}
\label{remark.z}
Condition (Z) says that
driver $f$ is allowed to have at most  sublinear growth with respect to $z$-variable.
A typical example of a driver satisfying (H1)--(H5), (Z) is of the following  form
\[
f(t,y,z):= f_0(t,y)+ b(y) (1+|z|)^\kappa,
\]
where $f_0$ satisfies (H2)--(H5), $b$ is continuous, non-increasing and bounded, and $\kappa\in (0,1)$.

Observe that, in general, under merely (H1)--(H4) and  (H5) with $p=1$, we cannot expect the existence of a  solution $(Y,Z)$
to BSDE$^T(\xi,f)$ with positive $\xi$ such that $Y$ is positive and of class (D). Indeed, assume that $(Y,Z)$ is a solution to  the following BSDE
\[
Y_t=\xi+\int_t^T Z_r\,dr-\int_t^TZ_r\,dB_r,\quad t\in [0,T],
\]
with positive $\xi\in L^1(\mathcal{F}_T)$, and $Y$ is positive of class (D).  Let $(\tau_k)$ be
chain such that $Y\in \mathcal S^2(0,\tau_k), Z\in \mathcal H^2_{\mathbb F}(0,\tau_k),\, k\ge 1$. Then,
by It\^o's formula
\[
Y_0=\mathbb E\Big[Y_{\tau_k}\exp(-\frac{\tau_k}{2}+B_{\tau_k})\Big]
\]
Therefore, by applying Fatou's lemma, we find that
\[
e^TY_0\ge\mathbb E[\xi \exp(B_T)].
\]
If the above inequality  was true for any positive $\xi\in L^1(\mathcal{F}_T)$, then $\exp(B_T)$ would be bounded, a contradiction.
\end{remark}


\begin{theorem}\label{12sierpnia1}
Assume that \textnormal{(H1)-(H4), (Z)} are in force. Moreover, assume that \textnormal{(H5)} is satisfied with $p=1$. 
 Then the following assertions hold.
 \begin{enumerate}
 \item[(i)]  There exists a  solution $(Y,Z)$ of \textnormal{BSDE}$^T(\xi,f)$ such that $Y$ is of class \textnormal{(D)} and $Z\in\mathcal{H}^s_{\mathbb{F}}(0,T)$, $s\in(0,1)$.
 \item[(ii)] There exists at most one solution  $(Y,Z)$ to \textnormal{BSDE}$^T(\xi,f)$ such that $Y$ is of class  \textnormal{(D)}.
 \end{enumerate} 
\end{theorem}
\begin{proof}
The assertion (i) follows from \cite[Theorem 6.3]{bdh}. As to  (ii), by \cite[Theorem 6.2]{bdh},
there exists at most one solution  $(Y,Z)$ to \textnormal{BSDE}$^T(\xi,f)$ such that $Y$ is of class  \textnormal{(D)}
and $Z\in\mathcal{H}^s_{\mathbb{F}}(0,T)$, $s\in(0,1)$. So, it is enough to show that if 
$(Y,Z)$ is a solution  to \textnormal{BSDE}$^T(\xi,f)$ such that $Y$ is of class  \textnormal{(D)}, then $Z\in\mathcal{H}^s_{\mathbb{F}}(0,T)$, $s\in(0,1)$.
This follows at once from \cite[Remark 2.1]{KRzS2} and \cite[Lemma 3.1]{bdh}.
\end{proof}

\section{Reflected BSDEs with two optional barriers under  Mokobodzki's condition}\label{roz4}

In this section we assume that processes $L$ and $U$ are merely $\mathbb{F}$-optional. 
Let $\alpha,\beta\in\mathcal{T}$, $\alpha\le\beta$, and $\hat{\xi}\in\mathcal{F}_{\beta}$ such that $L_{\beta}\le\hat{\xi}\le U_{\beta}$. 

\begin{definition}
We say that a triple $(Y,Z,R)$ of $\mathbb{F}$-adapted processes is a solution to  reflected backward stochastic differential equation
on the interval $[[\alpha,\beta]]$ with right-hand side $f$, terminal value $\hat{\xi}$, lower barrier $L$ and upper barrier $U$ (RBSDE$^{\alpha,\beta}(\hat{\xi},f,L,U)$ for short) if
\begin{enumerate}
\item[(a)] $Y$ is a regulated process and $Z\in\mathcal{H}_{\mathbb{F}}(\alpha,\beta)$,
\item[(b)] $R\in\mathcal{V}_{0,\mathbb{F}}(\alpha,\beta)$, $L_t\le Y_t\le U_t$, $t\in[\alpha,\beta]$, and
\begin{equation*}
\begin{split}
&\int^{\beta}_{\alpha}(Y_{r-}-\overrightarrow{L}_r)\,dR^{*,+}_r+\sum_{\alpha\le r<\beta}(Y_r-L_r)(\Delta^+R_r)^+\\&
=\int^{\beta}_{\alpha}(\underrightarrow{U}_r-Y_{r-})\,dR^{*,-}_r+\sum_{\alpha\le r<\beta}(U_r-Y_r)(\Delta^+R_r)^-=0,
\end{split}
\end{equation*}
where $R^*=R^{*,+}-R^{*,-}$ is the Jordan decomposition of $R^*$,
\item[(c)] $\int^{\beta}_{\alpha}|f(r,Y_r,Z_r)|\,dr<\infty$,
\item[(d)] $Y_t=\hat{\xi}+\int^{\beta}_t f(r,Y_r,Z_r)\,dr+R_{\beta}-R_t-\int^{\beta}_t Z_r\,dB_r$, $t\in[\alpha,\beta]$.
\end{enumerate}
\end{definition}

In what follows we refer to condition (b) as the {\em minimality condition}.

Let us adopt the shorthand RBSDE$^{\beta}:=$RBSDE$^{0,\beta}$.

We consider the following  condition, which we call
{\em  strong Mokobodzki's condition}:

\begin{enumerate}
\item[(H6)] there exists a process $X\in\mathcal{M}_{loc}(0,T)+\mathcal{V}^p_{\mathbb F}(0,T)$ such that
 $L\le X\le U$, $X\in \mathcal S^p(0,T)$ and $f(\cdot,X,0)\in L^{1,p}_{\mathbb F}(0,T)$.
\end{enumerate}

Assume that $L_T\le\xi\le U_T$. The following result has been proven in \cite[Proposition 3.2,Theorem 3.9]{KRzS2}.

\begin{theorem}
\label{18listopada1}
Let $p>1$.  Assume  \mbox{\rm(H1)-(H5)}. 
\begin{enumerate}
\item[(i)] There exists at most one solution $(Y,Z,R)$ to \textnormal{RBSDE}$^T(\xi,f,L,U)$  
such that $Y\in\mathcal{S}^p_{\mathbb{F}}(0,T)$.
\item[(ii)] There exists a  solution $(Y,Z,R)$ to \textnormal{RBSDE}$^T(\xi,f,L,U)$ 
such that $Y\in\mathcal{S}^p_{\mathbb{F}}(0,T)$, $Z\in\mathcal{H}^p_{\mathbb{F}}(0,T)$ and $R\in\mathcal{V}^p_{0,\mathbb{F}}(0,T)$
if and only if  \mbox{\rm(H6)} holds.
\end{enumerate}

\end{theorem}

In the case of $p=1$, we consider the following version of 
{\em  strong Mokobodzki's condition}:

\begin{enumerate}
\item[(H6*)] there exists a process $X\in\mathcal{M}_{loc}(0,T)+\mathcal{V}_{\mathbb{F}}^1(0,T)$ such that $X$ is of class (D), $L\le X\le U$ and $f(\cdot,X,0)\in L^1_{\mathbb{F}}(0,T)$.
\end{enumerate}

The following result has been proven in \cite[Theorem 3.8]{KRzS2}.

\begin{theorem}\label{28wrzesnia1}
Let $p=1$.  Assume  \textnormal{(H1)-(H5), (H6*), (Z)}. Then there exists a unique solution $(Y,Z,R)$ of \textnormal{RBSDE}$^T(\xi,f,L,U)$ such that $Y$ is of class {\rm (D)}, $Z\in\mathcal{H}^q_{\mathbb{F}}(0,T)$, $q\in(0,1)$ and $R\in\mathcal{V}^1_{0,\mathbb{F}}(0,T)$.
\end{theorem}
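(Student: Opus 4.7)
The plan is to combine a truncation/approximation scheme for existence with the localization procedure described in the introduction for uniqueness, leveraging the three results already at our disposal: Theorem \ref{18listopada1} (which handles the $L^p$-case with $p>1$), Theorem \ref{12sierpnia1} (which provides the $L^1$ theory for unreflected BSDEs under (Z) and yields the natural regularity class $Z\in\mathcal H^q_{\mathbb F}$ for $q\in(0,1)$), and the Mokobodzki-type barrier $X$ supplied by (H6*).

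For existence, I would approximate by truncating the data: set $\xi_n:=(-n)\vee\xi\wedge n$ and let $f_n(t,y,z):=f(t,y,z)-f(t,0,0)+[(-n)\vee f(t,0,0)\wedge n]$, so that $(\xi_n,f_n(\cdot,0,0))\in L^2(\mathcal F_T)\times L^{1,2}_{\mathbb F}(0,T)$ and all hypotheses (H1)--(H4),(Z) are preserved uniformly in $n$. Since $X$ lies between $L$ and $U$ and is in $\mathcal M_{loc}+\mathcal V^1$ with $f(\cdot,X,0)\in L^1_{\mathbb F}$, one can upgrade it to a process satisfying (H6) with $p=2$ after a stopping-time localization, and then invoke Theorem \ref{18listopada1} on each piece to obtain solutions $(Y^n,Z^n,R^n)$ with $Y^n\in\mathcal S^2$. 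The central a priori estimates — obtained by applying It\^o--Tanaka to $|Y^n-X|$ and using the minimality condition together with (H2)(H3) — yield uniform class (D) bounds on $Y^n$, uniform $L^1$ bounds on $|R^n|_{0,T}$, and, via the estimates of \cite{bdh} applied to the BSDE satisfied by $Y^n$ between the barriers, uniform $\mathcal H^q$-bounds on $Z^n$ for any $q\in(0,1)$.

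Next I would establish that the sequence is Cauchy. Applying It\^o--Tanaka to $|Y^n-Y^m|$ on intervals $[0,\tau_k]$ of a chain on which $Z^n,Z^m$ are square integrable, the monotonicity (H2) kills the $y$-part of the generator difference, condition (Z) controls the $z$-part by $\gamma(g+|Y^n|+|Y^m|+|Z^n|+|Z^m|)^\eta$ which is $L^1$ because $\eta<1<q$, and the minimality conditions for $R^n,R^m$ give $\int(Y^n-Y^m)\,d(R^n-R^m)\le 0$ in the regulated-path sense of condition (b). Passing $k\to\infty$ and using uniform integrability, $(Y^n)$ converges in class (D), $(Z^n)$ in $\mathcal H^q_{\mathbb F}(0,T)$ for every $q<1$, and $(R^n)$ in $\mathcal V^1_{0,\mathbb F}(0,T)$. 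The limit triple $(Y,Z,R)$ then satisfies the integral equation (d) pointwise. The delicate point is that the minimality condition (b) involves $\overrightarrow L,\underrightarrow U$ and the c\`adl\`ag parts $R^{\pm,*}$; one shows it survives the limit by verifying it first for $n$ via Theorem \ref{18listopada1} and then using weak lower-semicontinuity of the obstacle-violation functionals along the regulated trajectories, together with the fact that the predictable and discrete parts of $R^n$ converge separately.

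For uniqueness, suppose $(Y,Z,R)$ and $(\tilde Y,\tilde Z,\tilde R)$ are two solutions of class (D). Choose a chain $(\tau_k)$ on which both $Z,\tilde Z\in\mathcal H^2(0,\tau_k)$ (possible because $Z,\tilde Z\in\mathcal H^q$ for $q<1$ and both are finite). On $[0,\tau_k]$ apply It\^o--Tanaka to $|Y-\tilde Y|$; the generator difference splits as $(f(\cdot,Y,Z)-f(\cdot,\tilde Y,Z))+(f(\cdot,\tilde Y,Z)-f(\cdot,\tilde Y,\tilde Z))$, the first sign-controlled by (H2)(H3), the second sub-linearly controlled by (Z). A standard argument using that $Y,\tilde Y$ both lie in $[L,U]$ and satisfy (b) shows $(Y_{r-}-\tilde Y_{r-})d(R-\tilde R)^*_r\le 0$ and the analogous bound on the jump part, so the $dR$-term in It\^o's formula is non-positive. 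Taking expectations, using that the martingale part is a true martingale on $[0,\tau_k]$, and then letting $k\to\infty$ with uniform integrability provided by the class (D) hypothesis and the $\eta<1$ growth in (Z), gives $\mathbb E|Y_\theta-\tilde Y_\theta|=0$ for every $\theta\in\mathcal T$. Then $Y=\tilde Y$, the integral equation forces $R=\tilde R$ and the martingale representation gives $Z=\tilde Z$.

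The main obstacle is the passage to the limit in the minimality condition (b) under non-c\`adl\`ag barriers, where the $\overrightarrow L,\underrightarrow U$ envelopes and the $\Delta^+R^\pm$ jump terms require careful bookkeeping that the approximating triples cannot be allowed to push the limit $Y$ away from the true left/right limit envelopes of $L,U$; the second obstacle, technically more delicate, is that the $Z$-part of a solution is only in $\mathcal H^q$ for $q<1$, which forces both the a priori estimates and the uniqueness calculation to be performed along a chain and with hypothesis (Z) used to keep the limiting residual $f(\cdot,\tilde Y,Z)-f(\cdot,\tilde Y,\tilde Z)$ integrable.
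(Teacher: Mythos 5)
First, a point of order: the paper does not prove Theorem \ref{28wrzesnia1} at all --- it is imported verbatim from \cite{KRzS2} --- so there is no in-paper argument to compare yours against. Judged on its own terms, your outline has the right general shape (truncate to $L^2$ data, solve via Theorem \ref{18listopada1} after localization, pass to the limit, then prove uniqueness by an It\^o--Tanaka comparison), but two steps as written do not go through.

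The more serious gap is your treatment of the $z$-part of the generator difference, both in the Cauchy estimate for $(Y^n)$ and in the uniqueness argument. After It\^o--Tanaka on $|Y^n-Y^m|$ (or $|Y-\tilde Y|$) the drift contains a term bounded by $\lambda|Z^n_r-Z^m_r|$, which has no sign and is not small as $n,m\to\infty$; knowing that it is dominated by an integrable quantity of the form $\gamma(g+|Y|+|Z|)^{\eta}$ via (Z) does not let you discard it. The standard way out is the linearization $f(r,\tilde Y_r,Z_r)-f(r,\tilde Y_r,\tilde Z_r)=\beta_r\cdot(Z_r-\tilde Z_r)$ with $|\beta|\le\lambda$ followed by a Girsanov change of measure, but then the class (D) property of $|Y-\tilde Y|$ under $P$ does not transfer to the new measure --- this is exactly the obstruction illustrated in Remark \ref{remark.z}, and it is the reason (Z) is needed at all. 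Making this work requires the interpolation argument of \cite{bdh} (using simultaneously the Lipschitz bound and the sublinear bound, together with the a priori $\mathcal{H}^q$-estimate on $Z$ for $q>\eta$), not merely ``uniform integrability provided by the class (D) hypothesis.'' As written, your conclusion $\mathbb{E}|Y_\theta-\tilde Y_\theta|=0$ does not follow from the inequality that precedes it. A second, independent gap is the passage to the limit in the minimality condition (b): for merely optional barriers the functional involves $\overrightarrow{L}$, $\underrightarrow{U}$, the c\`adl\`ag parts $R^{\pm,*}$ and the right-jump sums $\Delta^+R^{\pm}$ separately, and ``weak lower-semicontinuity of the obstacle-violation functionals'' is an assertion, not an argument --- one must in particular show that the decomposition of $R^n$ into its c\`adl\`ag and purely right-discontinuous parts converges componentwise, which is the technical heart of the matter and is left unaddressed. (Two smaller points: after truncating $\xi$ you must also ensure $L_T\le\xi_n\le U_T$, and the inequality ``$\eta<1<q$'' should read $\eta<q<1$.)
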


\section{Nonlinear expectation}\label{roz5}
Let $p\ge 1$. Throughout this section, we assume that  either  $p=1$ and (H1)-(H5), (Z) are in force
or $p>1$ and (H1)-(H5) are in force. Let $\alpha,\beta\in\mathcal{T}$, $\alpha\le\beta$. We define the operator
\[
\mathbb{E}^{(1),f}_{\alpha,\beta}:L^1(\mathcal{F}_{\beta})\to L^1(\mathcal{F}_{\alpha}),
\]
by letting $\mathbb{E}^{(1),f}_{\alpha,\beta}(\xi):=Y_{\alpha}$,
where $(Y,Z)$ is a  solution to BSDE$^{\beta}(\xi,f)$ such that $Y$ is of class (D).
By Theorem \ref{12sierpnia1}, the operator  $\mathbb{E}^{(1),f}_{\alpha,\beta}$ is well defined under conditions (H1)-(H5), (Z). By  Theorem \ref{5grudnia3}, under (H1)--(H5) (with $p>1$), we  may  define the  operator
\[
\mathbb{E}^{(p),f}_{\alpha,\beta}:L^p(\mathcal{F}_{\beta})\to L^p(\mathcal{F}_{\alpha}),
\]
with $\mathbb{E}^{(p),f}_{\alpha,\beta}(\xi):=Y_{\alpha}$, where $(Y,Z)$ is a  solution to BSDE$^{\beta}(\xi,f)$ such that $Y\in\mathcal S^p_{\mathbb{F}}(0,\beta)$.
Finally, we define operator 
\[
\mathbb{E}^{f}_{\alpha,\beta}:L^1(\mathcal{F}_{\beta})\to L^1(\mathcal{F}_{\alpha}),
\]
by letting

\begin{equation}
\mathbb E^{f}_{\alpha,\beta}(\xi):=
\begin{cases}
\mathbb{E}^{(1),f}_{\alpha,\beta}(\xi) & \text{for } \xi\in L^1(\mathcal{F}_{\beta})\setminus \bigcup_{p>1}L^p(\mathcal{F}_{\beta})\\
\mathbb{E}^{(p),f}_{\alpha,\beta}(\xi) & \text{for } \xi  \in \bigcup_{p>1}L^p(\mathcal{F}_{\beta}).
\end{cases}
\end{equation}

We say that a process $X$ of class (D) is an $\mathbb{E}^f$-supermartingale (resp. $\mathbb{E}^f$-submartingale) on $[[\alpha,\beta]]$, if $\mathbb{E}^f_{\sigma,\tau}(X_{\tau})\le X_{\sigma}$ (resp. $\mathbb{E}^f_{\sigma,\tau}(X_{\tau})\ge X_{\sigma}$) for every $\sigma,\tau\in\mathcal{T}_{\alpha,\beta}$, $\sigma\le\tau$. $X$ is an $\mathbb{E}^f$-martingale on $[[\alpha,\beta]]$, if $X$ is at the same time an $\mathbb{E}^f$-supermartingale and an $\mathbb{E}^f$-submartingale on $[[\alpha,\beta]]$.

\begin{remark}\label{7grudnia8}
Note that the process $Y$ of class \textnormal{(D)} is an $\mathbb{E}^f$-martingale on $[[\alpha,\beta]]$ if and only if $Y$ is indistinguishable from the first component of the solution to BSDE$^{\alpha,\beta}(Y^\beta,f)$ on $[[\alpha,\beta]]$.
Thus, in order to prove that   $Y$ is an $\mathbb{E}^f$-martingale on $[[\alpha,\beta]]$, it suffices to show that $Y_{\sigma}=\mathbb{E}^f_{\sigma,\beta}(Y_{\beta})$, for any  $\sigma\in\mathcal{T}_{\alpha,\beta}$.
\end{remark}

\begin{proposition}\label{nonlinprop}
Let  $\alpha,\beta\in\mathcal{T}$, $\alpha\le\beta$.
\begin{enumerate}
\item[(i)] Let $\xi\in L^p(\mathcal{F}_{\beta})$ and let $V$ be an $\mathbb{F}$-adapted, finite variation process such that $V_{\alpha}=0$. Let $(X,H)$ be a solution to \textnormal{BSDE}$^{\alpha,\beta}(\xi,f+dV)$ such that $X$ is of class \textnormal{(D)}, in case $p=1$, and $X\in \mathcal S^p_{\mathbb F}(\alpha,\beta)$, in case $p>1$.
If $V$ (resp. $-V$) is increasing, then $X$ is $\mathbb{E}^f$-supermartingale (resp. $\mathbb{E}^f$-submartingale) on $[[\alpha,\beta]]$.
\item[(ii)] If $\xi_1,\xi_2\in L^p(\mathcal{F}_{\beta})$ and $\xi_1\le\xi_2$, then $\mathbb{E}^f_{\alpha,\beta}(\xi_1)\le\mathbb{E}^f_{\alpha,\beta}(\xi_2)$.
\item[(iii)] Let $\xi\in L^p(\mathcal{F}_{\beta})$. For every $A\in\mathcal{F}_{\alpha}$,
\[
\mathbf{1}_A\mathbb{E}^f_{\alpha,\beta}(\xi)=\mathbb{E}^{f_A}_{\alpha,\beta}(\mathbf{1}_A\xi),
\]
where $f_A(t,y,z)=f(t,y,z)\mathbf{1}_A\mathbf{1}_{\{t\ge\alpha\}}$.
\item[(iv)] Let $\xi\in L^p(\mathcal{F}_{\beta})$. For every $\gamma\in\mathcal{T}$ such that $\gamma\ge\beta$,
\[
\mathbb{E}^f_{\alpha,\beta}(\xi)=\mathbb{E}^{f^{\beta}}_{\alpha,\gamma}(\xi),
\]
where $f^{\beta}(t,y,z)=f(t,y,z)\mathbf{1}_{\{t\le\beta\}}$.

\item[(v)] Let $p=1$. Assume that $f_1,f_2$ satisfies \textnormal{(H1)-(H5),(Z)} 
and let $\alpha,\beta_1,\beta_2\in\mathcal{T}$, $\alpha\le\beta_1\le\beta_2$. 
Assume that $\xi_1\in L^1(\mathcal{F}_{\beta_1})$ and $\xi_2\in L^1(\mathcal{F}_{\beta_2})$. 
Moreover, let $(Y^1,Z^1)$ be a solution to \textnormal{BSDE}$^{\alpha,\beta_2}(\xi_2,f_1^{\beta_1})$, 
where $f_1^{\beta_1}(t,y,z)=f_1(t,y,z)\mathbf{1}_{\{t\le\beta\}}$, and $(Y^2,Z^2)$ 
be a solution to \textnormal{BSDE}$^{\alpha,\beta_2}(\xi_2,f_2)$,
such that $Y^1, Y^2$ are of class (D). If $(Y^1-Y^2)\in\mathcal{S}^2_{\mathbb{F}}(\alpha,\beta_2)$, then
\begin{equation*}
\begin{split}
&|\mathbb{E}^{f_1}_{\alpha,\beta_1}(\xi_1)-\mathbb{E}^{f_2}_{\alpha,\beta_2}(\xi_2)|^2\le C \mathbb{E}\Big(\int^{\beta_1}_\alpha|Y^1_r-Y^2_r||f_1-f_2|(r,Y^2_r,Z^2_r)\,dr\\
&\quad+|\xi_1-\xi_2|^2+\int^{\beta_2}_{\beta_1}|Y^1_r-Y^2_r||f_2(r,Y^2_r,Z^2_r)|\,dr|\mathcal{F}_{\alpha}\Big),
\end{split}
\end{equation*}
for some $C$ depending only on $\lambda,\mu, T$.
\item[(vi)] Let $p>1$. Assume that $f_1,f_2$ satisfies \textnormal{(H1)-(H5)} and let 
$\alpha,\beta_1,\beta_2\in\mathcal{T}$, $\alpha\le\beta_1\le\beta_2$. 
Assume that $\xi_1\in L^p(\mathcal{F}_{\beta_1})$ and $\xi_2\in L^p(\mathcal{F}_{\beta_2})$. 
Moreover, let $(Y^1,Z^1)$ be a solution to \textnormal{BSDE}$^{\beta_2}(\xi_1,f_1^{\beta_1})$, with $Y^1\in \mathcal S^p_{\mathbb F}(0,\beta_2)$,
where $f_1^{\beta_1}(t,y,z)=f_1(t,y,z)\mathbf{1}_{\{t\le\beta_1\}}$, and $(Y^2,Z^2)$ 
be a solution to \textnormal{BSDE}$^{\beta_2}(\xi_2,f_2)$, with  $Y^2\in \mathcal S^p_{\mathbb F}(0,\beta_2)$. Then there exists $c>0$, depending only on $T,\mu,\lambda,p$, such that 
such that 
\begin{equation*}
\begin{split}
&\|\mathbb{E}^{f_1}_{\cdot,\beta_1}(\xi_1)-\mathbb{E}^{f_2}_{\cdot,\beta_2}(\xi_2)\|_{\mathcal S^p(0,\beta_1)}\le c \Big[ \mathbb E\Big(\int^{\beta_1}_0|f_1-f_2|(r,Y^2_r,Z^2_r)\,dr\Big)^p\\
&\quad+\mathbb E|\xi_1-\xi_2|^p+\mathbb E\Big(\int^{\beta_2}_{\beta_1}|f_2(r,Y^2_r,Z^2_r)|\,dr\Big)^p\Big]^{1/p}.
\end{split}
\end{equation*}
\end{enumerate}
\end{proposition}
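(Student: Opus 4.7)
The plan is to prove each of the six statements by reducing to standard BSDE techniques -- comparison, uniqueness, and a priori $L^p$-estimates via It\^o's formula -- applied to the BSDEs that define $\mathbb{E}^f$. Throughout I will invoke Theorems \ref{5grudnia3} and \ref{12sierpnia1} to guarantee existence and uniqueness of solutions in the relevant classes, and check that the auxiliary drivers appearing below still satisfy (H1)--(H5) (and (Z) when $p=1$).

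For part (i), I would let $(\tilde Y, \tilde Z)$ be the unique solution to BSDE$^{\sigma,\tau}(X_\tau, f)$ in the appropriate class, so that $\tilde Y_\sigma = \mathbb{E}^f_{\sigma,\tau}(X_\tau)$. On $[\sigma, \tau]$ the pair $(X, H)$ satisfies $X_t = X_\tau + \int_t^\tau f(r, X_r, H_r)\,dr + (V_\tau - V_t) - \int_t^\tau H_r\,dB_r$, i.e.\ a BSDE with driver $f$ augmented by the increasing process $V$. Applying It\^o--Tanaka to $(\tilde Y - X)^+$ and linearizing $f(r,\tilde Y_r,\tilde Z_r)-f(r,X_r,H_r)$ via (H1)--(H2), together with $dV \ge 0$, forces $(\tilde Y - X)^+ \equiv 0$, whence $X \ge \tilde Y$ pointwise and $X_\sigma \ge \mathbb{E}^f_{\sigma,\tau}(X_\tau)$. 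Part (ii) is the parallel comparison applied to two BSDE solutions with the same driver and terminals $\xi_1 \le \xi_2$.

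Parts (iii) and (iv) are uniqueness arguments. For (iii), multiplying the BSDE for $\xi$ by the $\mathcal{F}_\alpha$-measurable $\mathbf{1}_A$ shows that $(\mathbf{1}_A Y, \mathbf{1}_A Z)$ solves BSDE$^{\alpha,\beta}(\mathbf{1}_A \xi, f_A)$; here $f_A$ is designed precisely so that $\mathbf{1}_A f(r, Y_r, Z_r) = f_A(r, \mathbf{1}_A Y_r, \mathbf{1}_A Z_r)$ on $[\alpha, T]$, and the inherited estimates $|f_A| \le |f|$ show that $f_A$ satisfies the same hypotheses as $f$. Uniqueness then identifies the value at $\alpha$ with $\mathbb{E}^{f_A}_{\alpha,\beta}(\mathbf{1}_A \xi)$. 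For (iv), I extend $(Y, Z)$ past $\beta$ by $Y_t := \xi$, $Z_t := 0$ on $[\beta, \gamma]$; since $f^\beta$ vanishes on $(\beta, \gamma]$, the extension satisfies BSDE$^{\alpha,\gamma}(\xi, f^\beta)$, and uniqueness yields the claim.

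For (v) and (vi), the trivial extension from (iv) lets me assume both $(Y^i, Z^i)$ are defined on a common interval $[\alpha, \beta_2]$ (with $(Y^1, Z^1)$ constant past $\beta_1$). Then I apply It\^o's formula to $|Y^1 - Y^2|^2$ for (v), respectively to $|Y^1 - Y^2|^p$ for (vi), linearize the driver differences $f_1^{\beta_1}(r, Y^1, Z^1) - f_2(r, Y^2, Z^2)$ using (H1)--(H2), and use Young's inequality together with the BDG inequality to absorb the quadratic $Z$-term; the splitting of the forcing term into integrals over $[\alpha, \beta_1]$ and $[\beta_1, \beta_2]$ reflects precisely that $f_1^{\beta_1} \equiv 0$ on the latter interval. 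The main obstacle will be the $L^1$ case (v): under hypothesis (Z) one has $Z^i \in \mathcal{H}^q_{\mathbb{F}}$ only for $q < 1$, so expectations of the full It\^o expansion cannot be taken directly. I would handle this in the standard way by introducing a localizing chain $(\tau_k)$ on which each $Z^i$ lies in $\mathcal{H}^2_{\mathbb{F}}(\alpha, \tau_k)$, performing the conditional estimate on $[\alpha, \tau_k]$, and then passing to the limit using the class-(D) property of $Y^1, Y^2$ and the hypothesis $Y^1 - Y^2 \in \mathcal{S}^2_{\mathbb{F}}(\alpha,\beta_2)$ to ensure uniform integrability of the boundary and martingale terms. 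Once this localization is in place the remaining computations for both (v) and (vi) are routine.
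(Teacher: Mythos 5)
Your proposal is correct and follows essentially the same route as the paper: comparison for (i)--(ii), uniqueness after identifying $(\mathbf{1}_AY,\mathbf{1}_AZ)$ resp.\ the trivially extended/restricted solution for (iii)--(iv), and a localized It\^o estimate on $|Y^1-Y^2|^2$ with the forcing term split at $\beta_1$ for (v)--(vi). The only difference is that where you re-derive the comparison theorem and the $L^p$ a priori bound by hand (It\^o--Tanaka, BDG), the paper simply cites them (\cite[Proposition 3.2, Lemma 3.3]{KRzS2} and Proposition \ref{13stycznia19}).
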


\begin{proof}
(i) Assume that $V$ is increasing. Let $\sigma,\tau\in \mathcal{T}$ be such that 
$\alpha\le\sigma\le\tau\le\beta$. Obviously, $(X,H)$ is a solution to BSDE$^{\alpha,\tau}(X_{\tau},f+dV)$. 
Let $(\tilde{X},\tilde{H})$ be a solution to BSDE$^{\alpha,\tau}(X_{\tau},f)$  such that $\tilde X$
is of class \textnormal{(D)}, in case $p=1$, and $\tilde X\in \mathcal S^p_{\mathbb F}(\alpha,\tau)$, in case $p>1$. 
By \cite[Proposition 3.2, Lemma 3.3]{KRzS2} and  Theorem \ref{12sierpnia1}, $X\ge\tilde{X}$ on $[[\alpha,\tau]]$, 
in particular, $X_{\sigma}\ge\tilde{X}_{\sigma}$. Therefore, we have  
$\mathbb{E}^f_{\sigma,\tau}(X_{\tau})=\tilde{X}_{\sigma}\le X_{\sigma}$, 
hence $X$ is $\mathbb{E}^f$-supermartingale. Analogously,  we show that if $-V$ is 
increasing, then $X$ is $\mathbb{E}^f$-submartingale. This completes the proof of (i).
The assertion (ii)  follows directly from \cite[Proposition 3.2, Lemma 3.3]{KRzS2}
and Theorem \ref{12sierpnia1}.
As to (iii), let $A\in\mathcal{F}_{\alpha}$. Let $(Y,Z)$, $(\bar{Y},\bar{Z})$ be solutions to 
BSDE$^{\beta}(\xi,f)$ and BSDE$^{\beta}(\mathbf{1}_A\xi,f_A)$, respectively, such that 
$Y_{\tau}=\mathbb{E}^f_{\tau,\beta}(\xi)$, $\tau\in \TT^\beta$
and $\bar{Y}_{\tau}=\mathbb{E}^{f_A}_{\tau,\beta}(\mathbf{1}_A\xi)$, $\tau\in\TT^\beta$. 
It is easy to see that $(\mathbf{1}_AY,\mathbf{1}_AZ)$ is a solution to 
BSDE$^{\alpha,\beta}(\mathbf{1}_A\xi,f_A)$. Indeed, for $\sigma\in\mathcal{T}_{\alpha,\beta}$,
\begin{equation*}
\begin{split}
\mathbf{1}_A Y_t&=\mathbf{1}_A\xi+\int^{\beta}_{\sigma}\mathbf{1}_Af(r,Y_r,Z_r)\,dr-\int^{\beta}_{\sigma}\mathbf{1}_AZ_r\,dB_r\\
&=\mathbf{1}_A\xi+\int^{\beta}_{\sigma}\mathbf{1}_Af(r,\mathbf{1}_AY_r,\mathbf{1}_AZ_r)\,dr-\int^{\beta}_{\sigma}\mathbf{1}_AZ_r\,dB_r.
\end{split}
\end{equation*}
By the uniqueness  for BSDEs (see Theorems \ref{5grudnia3},\ref{12sierpnia1}) $\mathbf{1}_AY=\bar{Y}$ on $[[\alpha,\beta]]$, which implies (iii).
For (iv), let $(Y,Z)$ be a solution to BSDE$^{\beta}(\xi,f)$ and let $(\bar{Y},\bar{Z})$ be a solution to BSDE$^{\gamma}(\xi,f^{\beta})$ 
such that  $Y_{\tau}=\mathbb{E}^f_{\tau,\beta}(\xi)$, $\tau\in\TT^\beta$ and $\bar{Y}_{\tau}=\mathbb{E}^{f^{\beta}}_{\tau,\gamma}(\xi)$,
$\tau\in\TT^\gamma$. 
Note that, $(\bar{Y},\bar{Z})$ is a solution to BSDE$^{\alpha,\beta}(\bar{Y}_{\beta},f)$. What is more, for $\sigma\in\mathcal{T}_{\alpha,\beta}$,
\[
\bar{Y}_{\sigma}=\mathbb{E}\Big(\xi+\int^{\gamma}_{\sigma} f^{\beta}(r,\bar{Y}_r,\bar{Z}_r)\,dr|\mathcal{F}_{\sigma}\Big)=\mathbb{E}\Big(\xi+\int^{\beta}_{\sigma} f(r,\bar{Y}_r,\bar{Z}_r)\,dr|\mathcal{F}_{\sigma}\Big),
\]
therefore $\bar{Y}_{\beta}=\xi$. Hence, $(\bar{Y},\bar{Z})$ is a solution to BSDE$^{\alpha,\beta}(\xi,f)$, which results,  by the uniqueness argument, that $\bar{Y}=Y$ on $[[\alpha,\beta]]$ and $\mathbb{E}^f_{\alpha,\beta}(\xi)=\bar{Y}_{\alpha}=\mathbb{E}^{f^{\beta}}_{\alpha,\gamma}$, $\alpha\le\beta$. This concludes the proof of (iv).  Now, we shall proceed to the proof of (v).
Let $(Y^1,Z^1)$, $(Y^2,Z^2)$ be defined as in the assertion (v).
By (iv), we know that $\mathbb{E}^{f_1}_{\alpha,\beta_1}(\xi_1)=Y^1_{\alpha}$, for $\alpha\in\mathcal{T}^{\beta_1}$. Let us define
\[
\tau_k=\inf\Big\{t\ge 0:\,\int^t_0|Z^1_r-Z^2_r|^2\,dr\ge k\Big\}\wedge \beta_2,\quad k\in\mathbb{N}.
\]
From   the definition of a  solution to BSDE, we have that $\{\tau_k\}_{k\ge 1}$ is a chain. 
By Ito's formula, for $\alpha\in\mathcal{T}^{\beta_1}$,
\begin{equation}\label{4grudnia3}
\begin{split}
&e^{a \alpha}|Y^1_{\alpha}-Y^2_{\alpha}|^2+\int^{\tau_k}_{\alpha} e^{a r}|Z^1_r-Z^2_r|^2\,dr+a\int^{\tau_k}_{\alpha} e^{a r}|Y^1_r-Y^2_r|^2\,dr\\
&\quad\le e^{a\tau_k}|Y^1_{\tau_k}-Y^2_{\tau_k}|^2+2\int^{\tau_k}_{\alpha} e^{a r}(Y^1_r-Y^2_r)(f^{\beta_1}_1(r,Y^1_r,Z^1_r)-f_2(r,Y^2_r,Z^2_r))\,dr\\
&\quad-2\int^{\tau_k}_{\alpha} e^{a r}(Y^1_r-Y^2_r)(Z^1_r-Z^2_r)\,dB_r,\quad a\ge 0.
\end{split}
\end{equation}
By (H1), (H2) (without loss of generality we may assume that $\mu=0$), we have 
\begin{align*}
&(Y^1_r-Y^2_r)(f(r,Y^1_r,Z^1_r)-f(r,Y^2_r,Z^2_r))\le \lambda|Y^1_r-Y^2_r||Z^1_r-Z^2_r|\\
&\quad+|Y^1_r-Y^2_r||f^{\beta_1}_1-f_2|(r,Y^2_r,Z^2_r)\le 4\lambda^2|Y^1_r-Y^2_r|^2+\frac{1}{4}|Z^1_r-Z^2_r|^2\\
&\quad+|Y^1_r-Y^2_r||f^{\beta_1}_1-f_2|(r,Y^2_r,Z^2_r).
\end{align*}
Therefore,  by \eqref{4grudnia3}, we have 
\begin{equation*}
\begin{split}
&e^{a \alpha}|Y^1_{\alpha}-Y^2_{\alpha}|^2+\int^{\tau_k}_{\alpha} e^{a r}|Z^1_r-Z^2_r|^2\,dr+a\int^{\tau_k}_{\alpha} e^{a r}|Y^1_r-Y^2_r|^2\,dr\\
&\quad\le e^{a \tau_k}|Y^1_{\tau_k}-Y^2_{\tau_k}|^2+8\lambda^2\int^{\beta_2}_{\alpha}e^{a r}|Y^1_r-Y^2_r|^2\,dr+\frac{1}{2}\int^{\beta_2}_{\alpha}e^{a r}|Z^1_r-Z^2_r|^2\,dr\\
&\quad+2\int^{\beta_2}_{\alpha}e^{a r}|Y^1_r-Y^2_r||f^{\beta_1}_1-f_2|(r,Y^2_r,Z^2_r)\,dr\\
&\quad-2\int^{\tau_k}_{\alpha} e^{a r}(Y^1_r-Y^2_r)(Z^1_r-Z^2_r)\,dB_r.
\end{split}
\end{equation*}
Consequently, by the fact that $\int^{\cdot}_0e^{\alpha r}(Y^1_r-Y^2_r)(Z^1_r-Z^2_r)\,dB_r$ is a martingale on $[[\alpha,\tau_k]]$,
we find that for $a\ge 8\lambda^2$,
\begin{equation}\label{4grudnia4}
\begin{split}
e^{a \alpha}|Y^1_{\alpha}-Y^2_{\alpha}|^2&\le \mathbb{E}\Big(e^{a \tau_k}|Y^1_{\tau_k}-Y^2_{\tau_k}|^2\\
&\quad+2\int^{\beta_2}_{\alpha}e^{a r}|Y^1_r-Y^2_r||f^{\beta_1}_1-f_2|(r,Y^2_r,Z^2_r)\,dr|\mathcal{F}_{\alpha}\Big).
\end{split}
\end{equation}
Since $Y^1-Y^2\in\mathcal{S}^2_{\mathbb{F}}(\alpha,\beta_2)$, 
we may conclude, by letting  $k\to\infty$ in the right-hand side of \eqref{4grudnia4}  and applying the Lebesgue dominated convergence theorem, that
\begin{equation*}
\begin{split}
e^{a \alpha}|Y^1_{\alpha}-Y^2_{\alpha}|^2&\le \mathbb{E}\Big(e^{a \beta_2}|\xi_1-\xi_2|^2\\
&\quad+2\int^{\beta_2}_{\alpha}e^{a r}|Y^1_r-Y^2_r||f^{\beta_1}_1-f_2|(r,Y^2_r,Z^2_r)\,dr|\mathcal{F}_{\alpha}\Big),
\end{split}
\end{equation*}
which implies that
\begin{equation}\label{4grudnia5}
\begin{split}
|Y^1_{\alpha}-Y^2_{\alpha}|^2&\le C \mathbb{E}\Big(|\xi_1-\xi_2|^2+\int^{\beta_2}_{\alpha}|Y^1_r-Y^2_r||f^{\beta_1}_1-f_2|(r,Y^2_r,Z^2_r)\,dr|\mathcal{F}_{\alpha}\Big)
\end{split}
\end{equation}
for some $C>0$ depending only on $\lambda$ and $\beta_2$. Finally, note that
\begin{equation*}
\begin{split}
\int^{\beta_2}_{\alpha}|Y^1_r-Y^2_r||f^{\beta_1}_1-f_2|(r,Y^2_r,Z^2_r)\,dr&=\int^{\beta_1}_{\alpha}|Y^1_r-Y^2_r||f^{\beta_1}_1-f_2|(r,Y^2_r,Z^2_r)\,dr\\
&\quad+\int^{\beta_2}_{\beta_1}|Y^1_r-Y^2_r||f_2(r,Y^2_r,Z^2_r)|\,dr,
\end{split}
\end{equation*}
which combined with \eqref{4grudnia5} completes the proof of (v). The inequality asserted in (vi) follows directly from Proposition \ref{13stycznia19}.
\end{proof}

\section{Extended nonlinear Dynkin games}\label{roz6}
\label{sec9}

In the whole section, we assume that (H1)--(H5), (Z) are in force and that $L$ and $U$ are $\mathbb{F}$-optional processes of class (D).

\begin{definition}
Let $\tau\in\mathcal{T}$ and $H\in\mathcal{F}_{\tau}$. A pair $\rho=(\tau,H)$ is called a {\em stopping system} if $\{\tau=T\}\subset H$.
For brevity, we write $\tau\lfloor H$.
\end{definition}

By $\mathcal{U}$ we denote the set of all stoping systems.  We then have $\mathcal{T}\subset\mathcal U$, by using embedding  $\mathcal{T}\ni \tau\mapsto \tau\lfloor\Omega\in \mathcal U$.
For given $\theta\in\mathcal{T}$, we denote  by $\mathcal{U}_{\theta}$ the set of all stopping systems $\tau\lfloor H$ such that $\tau \ge\theta$.
For an optional, right-limited process $\phi$ and  $\tau\lfloor H\in\mathcal U$ we put
\[
\phi_{\tau\lfloor H}:=\phi_{\tau}\mathbf{1}_{H}+\phi_{\tau+}\mathbf{1}_{H^c}.
\]
In particular, we have  $\phi_{\tau\lfloor \Omega}=\phi_{\tau}$.
For an optional process $\phi$ we let
\[
 \phi^u_{\tau\lfloor H}:=\phi_{\tau}\mathbf{1}_{H}+\overleftarrow{\phi}_{\tau}\mathbf{1}_{H^c}\quad\mathrm{and}\quad\phi^l_{\tau\lfloor H}:=\phi_{\tau}\mathbf{1}_{H}+\underleftarrow{\phi}_{\tau}\mathbf{1}_{H^c}.
\]
Note that, when $\phi$ is right-limited, then $\phi^u_{\tau\lfloor H}=\phi^l_{\tau\lfloor H}=\phi_{\tau\lfloor H}$.

For two stopping systems $\tau\lfloor H,\sigma\lfloor G\in\mathcal{U}$ we define the pay-off 
\begin{equation}\label{dyn27}
J(\tau\lfloor H,\sigma\lfloor G):=L^u_{\tau\lfloor H}\mathbf{1}_{\{\tau \le\sigma<T\}}+U^l_{\sigma\lfloor G}\mathbf{1}_{\{\sigma<\tau\}}+\xi\mathbf{1}_{\{\tau=\sigma=T\}}.
\end{equation}
Note that $J(\tau\lfloor H,\sigma\lfloor G)$ is $\mathcal{F}_{\tau\wedge\sigma}$-measurable random variable. 
Now, we shall proceed to the so called {\em extended Dynkin games}.

\begin{definition}
Let $\theta\in\mathcal{T}$.
\begin{enumerate}
\item[(i)] Upper and lower value of the game are defined respectively as
\begin{equation}\label{13sierpnia1}
\begin{split}
&\overline{V}(\theta):=\mathop{\mathrm{ess\,inf}}_{\sigma\lfloor G\in\mathcal{U}_{\theta}}\mathop{\mathrm{ess\,sup}}_{\tau\lfloor H\in\mathcal{U}_{\theta}}\mathbb{E}^f_{\theta,\tau\wedge\sigma}J(\tau\lfloor H,\sigma\lfloor G);\\
&\underline{V}(\theta):=\mathop{\mathrm{ess\,sup}}_{\tau\lfloor H\in\mathcal{U}_{\theta}}\mathop{\mathrm{ess\,inf}}_{\sigma\lfloor G\in\mathcal{U}_{\theta}}\mathbb{E}^f_{\theta,\tau\wedge\sigma}J(\tau\lfloor H,\sigma\lfloor G).
\end{split}
\end{equation}
\item[(ii)] We say that an extended $\mathbb{E}^f$-Dynkin game with pay-off function $J$ has a value  if $\overline{V}(\theta)=\underline{V}(\theta)$
for any $\theta\in\mathcal{T}$.
\end{enumerate}
\end{definition}

Let $(Y,Z,R)$ be a solution to RBSDE$^T(\xi,f,L,U)$. For every $\theta\in\mathcal{T}$ and $\varepsilon>0$ we define the following sets
\begin{equation*}
\begin{split}
&A^{\varepsilon}:=\{(\omega,t)\in\Omega\times[0,T]: \,Y_t(\omega)\le L^{\xi}_t(\omega)+\varepsilon\};\\ &B^{\varepsilon}:=\{(\omega,t)\in\Omega\times[0,T]:\,Y_t(\omega)\ge U^{\xi}_t(\omega)-\varepsilon\},
\end{split}
\end{equation*}
where $L^{\xi}_{\theta}:=L_{\theta}\mathbf{1}_{\{\theta<T\}}+\xi\mathbf{1}_{\{\theta=T\}}$, $U^{\xi}_{\theta}:=U_{\theta}\mathbf{1}_{\{\theta<T\}}+\xi\mathbf{1}_{\{\theta=T\}}$ for  $\theta \in \mathcal{T}$.
Let us also define the following stopping times
\begin{equation*}
\tau^{\varepsilon}_{\theta}:=\inf\{t\ge\theta,\,Y_t\le L^{\xi}_t+\varepsilon\}\wedge T,\quad\sigma^{\varepsilon}_{\theta}:=\inf\{t\ge\theta,\,Y_t\ge U^{\xi}_t-\varepsilon\}\wedge T.
\end{equation*} 
We let
\[
H^{\varepsilon}:=\{\omega\in\Omega:\,(\omega,\tau^{\varepsilon}_{\theta}(\omega))\in A^{\varepsilon}\};\quad G^{\varepsilon}:=\{\omega\in\Omega:\,(\omega,\sigma^{\varepsilon}_{\theta}(\omega))\in B^{\varepsilon}\}.
\]
Consider the following stopping systems 
\begin{equation}\label{dyn39}
\tau^{\varepsilon}_{\theta}\lfloor H^{\varepsilon}\quad\mathrm{and}\quad\sigma^{\varepsilon}_{\theta}\lfloor G^{\varepsilon}.
\end{equation}

\begin{lemma}\label{19sierpnia1}
Let $(Y,Z,R)$ be a solution to \textnormal{RBSDE}$^T(\xi,f,L,U)$. 
Then $Y$ is an $\mathbb{E}^f$-submartingale on $[[\theta,\tau^{\varepsilon}_{\theta}]]$ and an $\mathbb{E}^f$-supermartingale on $[[\theta,\sigma^{\varepsilon}_{\theta}]]$. 
\end{lemma}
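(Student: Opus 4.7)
The two claims are symmetric under swapping $L\leftrightarrow U$, $R^+\leftrightarrow R^-$ and sub- $\leftrightarrow$ super-martingale, so I focus on proving that $Y$ is an $\mathbb{E}^f$-submartingale on $[[\theta,\tau^{\varepsilon}_{\theta}]]$. The plan is to extract from the minimality condition that the Jordan-positive part $R^+$ of $R$ is constant on $[[\theta,\tau^{\varepsilon}_{\theta}]]$, and then invoke Proposition \ref{nonlinprop}(i).

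First I would exploit the very definition $\tau^{\varepsilon}_{\theta}=\inf\{t\ge\theta,\,Y_t\le L^{\xi}_t+\varepsilon\}\wedge T$ to obtain the strict separation $Y_s>L^{\xi}_s+\varepsilon\ge L_s+\varepsilon$ for every $s\in[\theta,\tau^{\varepsilon}_{\theta})$. Since $Y$ is regulated, passing to $\limsup_{s\uparrow r}$ on both sides yields the left-limit inequality $Y_{r-}\ge\overrightarrow{L}_r+\varepsilon$ for every $r\in(\theta,\tau^{\varepsilon}_{\theta}]$. Plugging these into the two parts of the $L$-side of the minimality condition
\[
\int_0^T(Y_{r-}-\overrightarrow{L}_r)\,dR^{+,*}_r+\sum_{0\le r<T}(Y_r-L_r)\Delta^+R^+_r=0,
\]
and using the pointwise non-negativity of each integrand/summand together with $dR^{+,*}\ge 0$ and $\Delta^+R^+\ge 0$, I conclude that $dR^{+,*}$ carries no mass on $(\theta,\tau^{\varepsilon}_{\theta}]$ and $\Delta^+R^+_r=0$ for every $r\in[\theta,\tau^{\varepsilon}_{\theta})$. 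Decomposing the increment $R^+_{\tau^{\varepsilon}_{\theta}}-R^+_{\theta}$ into its c\`adl\`ag-part increment and its sum of right-jumps, both pieces vanish, and the monotonicity of $R^+$ then forces $R^+$ to be constant on the whole of $[\theta,\tau^{\varepsilon}_{\theta}]$. Hence $R=R^+_{\theta}-R^-$ is non-increasing on this interval.

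Setting $V_t:=R_t-R_{\theta}$ for $t\in[\theta,\tau^{\varepsilon}_{\theta}]$, I have $V_{\theta}=0$, $V$ is of finite variation, and by the previous step $V$ is non-increasing, so $-V$ is increasing in the sense of Proposition \ref{nonlinprop}(i). Rewriting the RBSDE on $[\theta,\tau^{\varepsilon}_{\theta}]$ as
\[
Y_t=Y_{\tau^{\varepsilon}_{\theta}}+\int_t^{\tau^{\varepsilon}_{\theta}}f(r,Y_r,Z_r)\,dr+(V_{\tau^{\varepsilon}_{\theta}}-V_t)-\int_t^{\tau^{\varepsilon}_{\theta}}Z_r\,dB_r
\]
identifies $(Y,Z)$ as a solution to $\mathrm{BSDE}^{\theta,\tau^{\varepsilon}_{\theta}}(Y_{\tau^{\varepsilon}_{\theta}},f+dV)$ with $Y$ of class (D). Proposition \ref{nonlinprop}(i) applied in the submartingale direction ($-V$ increasing) then yields the claim. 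The supermartingale property on $[[\theta,\sigma^{\varepsilon}_{\theta}]]$ follows by the mirror argument: the $U$-side of the minimality condition, combined with $U_r-Y_r\ge\varepsilon$ on $[\theta,\sigma^{\varepsilon}_{\theta})$ and $\underrightarrow{U}_r-Y_{r-}\ge\varepsilon$ on $(\theta,\sigma^{\varepsilon}_{\theta}]$, forces $R^-$ to be constant on $[\theta,\sigma^{\varepsilon}_{\theta}]$, so $V:=R-R_{\theta}$ is non-decreasing there and Proposition \ref{nonlinprop}(i) applies directly with $V$ increasing.

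The main obstacle I expect is the passage from the pointwise strict separation $Y_s>L_s+\varepsilon$ on $[\theta,\tau^{\varepsilon}_{\theta})$ to the left-limit version $Y_{r-}\ge\overrightarrow{L}_r+\varepsilon$ on $(\theta,\tau^{\varepsilon}_{\theta}]$ (and its counterpart for the right-jump piece of the minimality condition). This is exactly where the non-c\`adl\`ag nature of the optional barriers enters and where having $Y$ regulated, so that left limits exist and dominate the $\limsup$ of $L$, is essential. Everything else is routine bookkeeping with the Jordan decomposition of $R$ plus a direct appeal to the already-proven Proposition \ref{nonlinprop}(i).
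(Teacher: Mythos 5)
Your proposal is correct and follows essentially the same route as the paper: strict $\varepsilon$-separation of $Y$ from $L$ on $[[\theta,\tau^{\varepsilon}_{\theta}[[$, its left-limit consequence $Y_{r-}\ge\overrightarrow{L}_r+\varepsilon$, the minimality condition forcing $R^+$ to be constant up to and including $\tau^{\varepsilon}_{\theta}$, and then Proposition \ref{nonlinprop}(i) applied to the resulting BSDE with the non-increasing finite-variation term $-R^-$. The only difference is cosmetic bookkeeping (you package the drift as $V=R-R_\theta$ rather than writing $-dR^-$ directly), so no further comment is needed.
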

\begin{proof}
We show that $Y$ is $\mathbb{E}^f$-submartingale on $[[\theta,\tau^{\varepsilon}_{\theta}]]$. The proof of the second assertion runs analogously. By the definition of $\tau^{\varepsilon}_{\theta}$ we have  $Y_t>L_t+\varepsilon$ on $[[\theta,\tau^{\varepsilon}_{\theta}[[$. This implies,  by the minimality condition, that $R^+$ is constant on $[[\theta,\tau^{\varepsilon}_{\theta}[[$. By  the last inequality,  we also have  $Y_{\tau^{\varepsilon}_{\theta}-}\ge\overrightarrow{L}_{\tau^{\varepsilon}_{\theta}}+\varepsilon$, so as a result,  by the minimality condition again, we get $\Delta^-R^+_{\tau^{\varepsilon}_{\theta}}=0$. Therefore,  $R^+$ is constant on $[[\theta,\tau^{\varepsilon}_{\theta}]]$. This implies that
\[
Y_t=Y_{\tau^{\varepsilon}_{\theta}}+\int^{\tau^{\varepsilon}_{\theta}}_tf(r,Y_r,Z_r)\,dr-R^-_{\tau^{\varepsilon}_{\theta}}+R^-_t-\int^{\tau^{\varepsilon}_{\theta}}_t Z_r\,dB_r,\quad t\in [\theta,\tau^\varepsilon_\theta].
\]
Thus, by Proposition \ref{nonlinprop} (i),  $Y$ is an $\mathbb{E}^f$-submartingale on $[[\theta,\tau^{\varepsilon}_{\theta}]]$. 
\end{proof}

\begin{lemma}\label{13sierpnia2}
Let $(Y,Z,R)$ be a solution to \textnormal{RBSDE}$^T(\xi,f,L,U)$. 
\begin{enumerate}
\item[(i)] For any $\theta\in\mathcal{T}$,
\begin{equation}\label{13sierpnia3}
Y_{\tau^{\varepsilon}_{\theta}\lfloor H^{\varepsilon}}\le L^{\xi,u}_{\tau^{\varepsilon}_{\theta}\lfloor H^{\varepsilon}}+\varepsilon\quad\mathrm{and}\quad Y_{\sigma^{\varepsilon}_{\theta}\lfloor G^{\varepsilon}}\ge U^{\xi,l}_{\sigma^{\varepsilon}_{\theta}\lfloor G^{\varepsilon}}-\varepsilon.
\end{equation}
\item[(ii)] For each  $\theta\in\mathcal{T}$ and any  $\tau\lfloor A,\sigma\lfloor B\in\mathcal{U}_\theta$,
\begin{equation}\label{13sierpnia4}
\begin{split}
&\mathbb{E}^f_{\theta,\tau^{\varepsilon}_{\theta}\wedge\sigma}(Y_{\tau^{\varepsilon}_{\theta}\lfloor H^{\varepsilon}}\mathbf{1}_{\{\tau^{\varepsilon}_{\theta}\le\sigma\}}
+Y_{\sigma\lfloor B}\mathbf{1}_{\{\sigma<\tau^{\varepsilon}_{\theta}\}})\ge Y_{\theta}
\quad\mathrm{and}\\
&\mathbb{E}^f_{\theta,\tau\wedge\sigma^{\varepsilon}_{\theta}}(Y_{\tau\lfloor A}\mathbf{1}_{\{\tau\le\sigma^{\varepsilon}_{\theta}\}}
+Y_{\sigma^{\varepsilon}_{\theta}\lfloor G^{\varepsilon}}\mathbf{1}_{\{\sigma^{\varepsilon}_{\theta}<\tau\}})\le Y_{\theta}.
\end{split}
\end{equation}
\end{enumerate}
\end{lemma}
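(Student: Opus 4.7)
I would split along the $\mathcal{F}_{\tau^\varepsilon_\theta}$-partition $\{H^\varepsilon,(H^\varepsilon)^c\}$. On $H^\varepsilon$ the inequality $Y_{\tau^\varepsilon_\theta}\le L^\xi_{\tau^\varepsilon_\theta}+\varepsilon$ is the very definition of membership in $A^\varepsilon$, and $L^{\xi,u}_{\tau^\varepsilon_\theta\lfloor H^\varepsilon}=L^\xi_{\tau^\varepsilon_\theta}$ there. On $(H^\varepsilon)^c$ the identity $Y_T=\xi=L^\xi_T$ forces $\{\tau^\varepsilon_\theta=T\}\subset H^\varepsilon$, so $(H^\varepsilon)^c\subset\{\tau^\varepsilon_\theta<T\}$; on this event the infimum defining $\tau^\varepsilon_\theta$ is not attained, furnishing a sequence $t_n\downarrow\tau^\varepsilon_\theta$ with $t_n>\tau^\varepsilon_\theta$ and $Y_{t_n}\le L^\xi_{t_n}+\varepsilon$. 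Since $Y$ is regulated the left-hand side tends to $Y_{\tau^\varepsilon_\theta+}$ while $\limsup_n L^\xi_{t_n}\le\overleftarrow{L^\xi}_{\tau^\varepsilon_\theta}$, whence $Y_{\tau^\varepsilon_\theta+}\le\overleftarrow{L^\xi}_{\tau^\varepsilon_\theta}+\varepsilon$. The bound at $\sigma^\varepsilon_\theta$ is perfectly symmetric.

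\textbf{Part (ii).} Both inequalities rest on Lemma~\ref{19sierpnia1}, by which $Y$ is an $\mathbb{E}^f$-submartingale on $[[\theta,\tau^\varepsilon_\theta]]$ and an $\mathbb{E}^f$-supermartingale on $[[\theta,\sigma^\varepsilon_\theta]]$, combined with the monotonicity of $\mathbb{E}^f$ (Proposition~\ref{nonlinprop}(ii)). For the first inequality I would plug $\rho:=\tau^\varepsilon_\theta\wedge\sigma\in\mathcal{T}_{\theta,\tau^\varepsilon_\theta}$ into the submartingale property to get $Y_\theta\le\mathbb{E}^f_{\theta,\rho}(Y_\rho)$, reducing the claim by monotonicity to the pointwise bound
\begin{equation*}
Y_{\tau^\varepsilon_\theta\wedge\sigma}\le Y_{\tau^\varepsilon_\theta\lfloor H^\varepsilon}\mathbf{1}_{\{\tau^\varepsilon_\theta\le\sigma\}}+Y_{\sigma\lfloor B}\mathbf{1}_{\{\sigma<\tau^\varepsilon_\theta\}}.
\end{equation*}
The workhorse observation is that on $\{Y_r>L_r\}$ the minimality condition $(Y_r-L_r)\Delta^+R^+_r=0$ forces $\Delta^+R^+_r=0$, so $\Delta^+Y_r=\Delta^+R^-_r\ge 0$ and therefore $Y_{r+}\ge Y_r$. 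I apply this at $r=\tau^\varepsilon_\theta$ on $(H^\varepsilon)^c$ (where $Y_{\tau^\varepsilon_\theta}>L_{\tau^\varepsilon_\theta}+\varepsilon$ by the very choice of $\tau^\varepsilon_\theta$) and at $r=\sigma$ on $\{\sigma<\tau^\varepsilon_\theta\}\cap B^c$ (where $\sigma<\tau^\varepsilon_\theta$ yields $Y_\sigma>L_\sigma+\varepsilon$); on $H^\varepsilon$ and on $B$ the comparison is an equality.

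\textbf{Second inequality and main obstacle.} The second inequality is the dual statement; here the pointwise target reads
\begin{equation*}
Y_{\tau\lfloor A}\mathbf{1}_{\{\tau\le\sigma^\varepsilon_\theta\}}+Y_{\sigma^\varepsilon_\theta\lfloor G^\varepsilon}\mathbf{1}_{\{\sigma^\varepsilon_\theta<\tau\}}\le Y_{\tau\wedge\sigma^\varepsilon_\theta},
\end{equation*}
and the dual jump observation is that on $\{Y_r<U_r\}$ the minimality condition forces $\Delta^+R^-_r=0$, hence $Y_{r+}\le Y_r$. This clinches $(G^\varepsilon)^c$ at $r=\sigma^\varepsilon_\theta$ (since $Y_{\sigma^\varepsilon_\theta}<U_{\sigma^\varepsilon_\theta}-\varepsilon$) and $\{\tau<\sigma^\varepsilon_\theta\}\cap A^c$ at $r=\tau$. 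The delicate case I foresee is the tie set $\{\tau=\sigma^\varepsilon_\theta\}\cap A^c$, where the strict separation $Y_\tau<U_\tau-\varepsilon$ is unavailable and $\Delta^+R^-_{\sigma^\varepsilon_\theta}$ may be strictly positive (exactly when $Y_{\sigma^\varepsilon_\theta}=U_{\sigma^\varepsilon_\theta}$). Handling this boundary case will require a finer reading of the minimality condition at the right endpoint of $[[\theta,\sigma^\varepsilon_\theta]]$, combined with the stopping-system constraint $\{\tau=T\}\subset A$ to control the set where $Y$ may jump upwards; the analogous tie case in the first inequality is harmless thanks to the $\le$ vs.\ $<$ convention chosen in the decomposition. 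Once this boundary analysis is completed, the remainder is a routine case inspection feeding into the monotonicity of $\mathbb{E}^f$.
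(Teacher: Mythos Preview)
Your overall architecture matches the paper's proof: part (i) is handled exactly as the authors do, and for part (ii) you correctly reduce both inequalities to pointwise comparisons of the form $Y_{\tau^{\varepsilon}_{\theta}\lfloor H^{\varepsilon}}\mathbf{1}_{\{\tau^{\varepsilon}_{\theta}\le\sigma\}}+Y_{\sigma\lfloor B}\mathbf{1}_{\{\sigma<\tau^{\varepsilon}_{\theta}\}}\ge Y_{\tau^{\varepsilon}_{\theta}\wedge\sigma}$ (and its dual), then feed these through the sub/supermartingale property from Lemma~\ref{19sierpnia1} and monotonicity of $\mathbb{E}^f$. Your jump analysis via the minimality condition is also the same mechanism the paper uses.

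The genuine gap is the tie case $\{\tau=\sigma^{\varepsilon}_{\theta}\}\cap A^c$ in the second inequality, which you flag but do not resolve. Your suggested ingredient---the stopping-system constraint $\{\tau=T\}\subset A$---is not what closes this case; it only tells you the tie set lives inside $\{\tau<T\}$, which is irrelevant to whether $\Delta^+Y_{\sigma^{\varepsilon}_{\theta}}\le 0$. The paper's actual argument is a contradiction that uses the \emph{left}-jump part of the minimality condition: suppose $\Delta^+R^-_{\sigma^{\varepsilon}_{\theta}}>0$. Minimality then forces $Y_{\sigma^{\varepsilon}_{\theta}}=U_{\sigma^{\varepsilon}_{\theta}}$, while the definition of $\sigma^{\varepsilon}_{\theta}$ gives $U_t-Y_t\ge\varepsilon$ on $[\theta,\sigma^{\varepsilon}_{\theta})$. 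The authors argue that this configuration forces a left jump $\Delta^-R^{-,*}_{\sigma^{\varepsilon}_{\theta}}>0$, whence the minimality condition for $R^{-,*}$ would yield $Y_{\sigma^{\varepsilon}_{\theta}-}=\underrightarrow{U}_{\sigma^{\varepsilon}_{\theta}}$; but passing to the limit in $U_t-Y_t\ge\varepsilon$ gives $\underrightarrow{U}_{\sigma^{\varepsilon}_{\theta}}-Y_{\sigma^{\varepsilon}_{\theta}-}\ge\varepsilon$, a contradiction. Thus $\Delta^+R^-_{\sigma^{\varepsilon}_{\theta}}=0$ unconditionally, and $Y_{\tau+}\le Y_\tau$ on $\{\tau=\sigma^{\varepsilon}_{\theta}\}$ follows. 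You should replace your sketch of the boundary case with this contradiction argument; the rest of your proof goes through as written.
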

\begin{proof}
 \textit{(i)} We shall prove the first inequality in \eqref{13sierpnia3}, the proof of the second one runs analogously. Due to the definitions of $\tau^{\varepsilon}_{\theta}\lfloor H^{\varepsilon},Y_{\tau^{\varepsilon}_{\theta}\lfloor H^{\varepsilon}},L^{\xi,u}_{\tau^{\varepsilon}_{\theta}\lfloor H^{\varepsilon}}$ and $H^{\varepsilon}$,  we have, on the set $H^{\varepsilon}$, that   $Y_{\tau^{\varepsilon}_{\theta}\lfloor H^{\varepsilon}}=Y_{\tau^{\varepsilon}_{\theta}}\le L^{\xi}_{\tau^{\varepsilon}_{\theta}}+\varepsilon=L^{\xi,u}_{\tau^{\varepsilon}_{\theta}\lfloor H^{\varepsilon}}+\varepsilon$,
while on the set $H^{\varepsilon,c}$, we have
\begin{equation}\label{13sierpnia5}
Y_{{\tau^{\varepsilon}_{\theta}\lfloor H^{\varepsilon}}}=Y_{\tau^{\varepsilon}_{\theta}+}\quad\mathrm{and}\quad L^{\xi,u}_{{\tau^{\varepsilon}_{\theta}\lfloor H^{\varepsilon}}}=\overleftarrow{L}^{\xi}_{\tau^{\varepsilon}_{\theta}}.
\end{equation}
On the other hand, by the definition of $\tau^{\varepsilon}_{\theta}$, 
for  P-a.e. $\omega\in\Omega$
 there exists nonincreasing sequence $(t_n)$ (depending on $\omega\in\Omega$) such that 
 $t_n\searrow\tau^{\varepsilon}_{\theta}$ and $Y_{t_n}\le L^{\xi}_{t_n}+\varepsilon$
 for all $n\in\mathbb{N}$. Therefore
 \[
 \limsup_{n\rightarrow\infty}Y_{t_n}\le\limsup_{n\rightarrow\infty}L^{\xi}_{t_n}+\varepsilon.
 \] 
Due to  the definiton of $\overleftarrow{L}^{\xi}$,  we have  
$\limsup_{n\rightarrow\infty}L^{\xi}_{t_n}\le\overleftarrow{L}^{\xi}_{\tau^{\varepsilon}_{\theta}}$. 
Since $Y$ is regulated,  $\limsup_{n\rightarrow\infty}Y_{t_n}=Y_{\tau^{\varepsilon}_{\theta}+}$. 
Thus, $Y_{\tau^{\varepsilon}_{\theta}+}\le\overleftarrow{L}^{\xi}_{\tau^{\varepsilon}_{\theta}}+\varepsilon$. 
This inequality combined with \eqref{13sierpnia5} implies that $Y_{\tau^{\varepsilon}_{\theta}\lfloor H^{\varepsilon}}\le L^{\xi,u}_{\tau^{\varepsilon}_{\theta}\lfloor H^{\varepsilon}}+\varepsilon$ 
on $H^{\varepsilon,c}$.

 \textit{(ii)} First, we shall prove the first inequality in \eqref{13sierpnia4}. We have  
\begin{equation}
\label{eq.fr0}
\begin{split}
&Y_{\tau^{\varepsilon}_{\theta}\lfloor H^{\varepsilon}}\mathbf{1}_{\{\tau^{\varepsilon}_{\theta}\le\sigma\}}
+Y_{\sigma\lfloor B}\mathbf{1}_{\{\sigma<\tau^{\varepsilon}_{\theta}\}}
=Y_{\tau^{\varepsilon}_{\theta}}\mathbf{1}_{H^{\varepsilon}}\mathbf{1}_{\{\tau^{\varepsilon}_{\theta}\le\sigma\}}
+Y_{\tau^{\varepsilon}_{\theta}+}\mathbf{1}_{H^{\varepsilon,c}}\mathbf{1}_{\{\tau^{\varepsilon}_{\theta}\le\sigma\}}
\\
&\quad+Y_{\sigma}\mathbf1_{B}\mathbf{1}_{\{\sigma<\tau^{\varepsilon}_{\theta}\}}
+Y_{\sigma+}\mathbf1_{B^c}\mathbf{1}_{\{\sigma<\tau^{\varepsilon}_{\theta}\}}.
\end{split}
\end{equation}
By Lemma \ref{19sierpnia1}  $Y$ is an $\mathbb{E}^f$-submartingale on $[[\theta,\tau^{\varepsilon}_{\theta}]]$, which implies that 
\begin{equation}
\label{eq.fr1}
\mathbf{1}_{\{\sigma<\tau^{\varepsilon}_{\theta}\}}Y_{\sigma+}\ge \mathbf{1}_{\{\sigma<\tau^{\varepsilon}_{\theta}\}}Y_{\sigma\wedge \tau^{\varepsilon}_{\theta}}.
\end{equation}
By the form of $H^\varepsilon$ and the {\em minimality condition}, $\mathbf1_{H^{\varepsilon,c}}\Delta^+R^+_{\tau^{\varepsilon}_{\theta}}=0$.
Thus,
\begin{equation}
\label{eq.fr2}
\mathbf{1}_{H^{\varepsilon,c}}Y_{\tau^{\varepsilon}_{\theta}+}\ge \mathbf{1}_{H^{\varepsilon,c}}Y_{\tau^{\varepsilon}_{\theta}}.
\end{equation}
By virtue of \eqref{eq.fr0}--\eqref{eq.fr2}, we conclude that   $Y_{\tau^{\varepsilon}_{\theta}\lfloor H^{\varepsilon}}\mathbf{1}_{\{\tau^{\varepsilon}_{\theta}\le\sigma\}}+Y_{\sigma\lfloor B}\mathbf{1}_{\{\sigma<\tau^{\varepsilon}_{\theta}\}}\ge Y_{\tau^{\varepsilon}_{\theta}\wedge\sigma}$. 
Since the operator $\mathbb{E}^f$ is nondecreasing (see Proposition \ref{nonlinprop} (ii))
\[
\mathbb{E}^f_{\theta,\tau^{\varepsilon}_{\theta}\wedge\sigma}(Y_{\tau^{\varepsilon}_{\theta}\lfloor H^{\varepsilon}}\mathbf{1}_{\{\tau^{\varepsilon}_{\theta}\le\sigma\}}+Y_{\sigma\lfloor B}\mathbf{1}_{\{\sigma<\tau^{\varepsilon}_{\theta}\}})\ge\mathbb{E}^f_{\theta,\tau^{\varepsilon}_{\theta}\wedge\sigma}(Y_{\tau^{\varepsilon}_{\theta}\wedge\sigma}).
\]
Using again the fact that $Y$ is $\mathbb{E}^f$-submartingale on $[[\theta,\tau^{\varepsilon}_{\theta}]]$, we conclude that 
$\mathbb{E}^f_{\theta,\tau^{\varepsilon}_{\theta}\wedge\sigma}(Y_{\tau^{\varepsilon}_{\theta}\wedge\sigma})\ge Y_{\theta}$, and as a result 
$\mathbb{E}^f_{\theta,\tau^{\varepsilon}_{\theta}\wedge\sigma}(Y_{\tau^{\varepsilon}_{\theta}\lfloor H^{\varepsilon}}\mathbf{1}_{\{\tau^{\varepsilon}_{\theta}\le\sigma\}}
+Y_{\sigma\lfloor B}\mathbf{1}_{\{\sigma<\tau^{\varepsilon}_{\theta}\}})\ge Y_{\theta}$. 

The proof of the second inequality in \eqref{13sierpnia4} requires slightly different arguments. We have
\begin{equation}
\label{eq.fr3}
\begin{split}
&Y_{\tau\lfloor A}\mathbf{1}_{\{\tau\le\sigma^{\varepsilon}_{\theta}\}}+Y_{\sigma^{\varepsilon}_{\theta}\lfloor G^{\varepsilon}}\mathbf{1}_{\{\sigma^{\varepsilon}_{\theta}<\tau\}}
=Y_{\tau}\mathbf{1}_{A}\mathbf{1}_{\{\tau\le\sigma^{\varepsilon}_{\theta}\}}
+Y_{\tau+}\mathbf{1}_{A^c}\mathbf{1}_{\{\tau\le\sigma^{\varepsilon}_{\theta}\}}
\\
&\quad+Y_{\sigma^{\varepsilon}_{\theta}}\mathbf1_{G^\varepsilon}\mathbf{1}_{\{\sigma^{\varepsilon}_{\theta}<\tau\}}
+Y_{\sigma^{\varepsilon}_{\theta}+}\mathbf1_{G^{\varepsilon,c}}\mathbf{1}_{\{\sigma^{\varepsilon}_{\theta}<\tau\}}.
\end{split}
\end{equation}
By the analogous argument as in the proof of \eqref{eq.fr0} -  the form of $G^\varepsilon$ combined with the definition of a solution to RBSDE give $\mathbf1_{G^{\varepsilon,c}}\Delta^+R^-_{\sigma^{\varepsilon}_{\theta}}=0$ - we find that 
\begin{equation}
\label{eq.fr4}
\mathbf1_{G^{\varepsilon,c}}Y_{\sigma^{\varepsilon}_{\theta}+}\le \mathbf1_{G^{\varepsilon,c}}Y_{\sigma^{\varepsilon}_{\theta}}.
\end{equation}
Using the fact that $Y$ is an $\mathbb{E}^f$-supermartingale on $[[\theta,\sigma^{\varepsilon}_{\theta}]]$ (see Lemma \ref{19sierpnia1}) we obtain that 
\begin{equation}
\label{eq.fr5}
\mathbf{1}_{\{\tau<\sigma^{\varepsilon}_{\theta}\}}Y_{\tau+}\le \mathbf{1}_{\{\tau<\sigma^{\varepsilon}_{\theta}\}}Y_{\tau}.
\end{equation}
But we also need 
\begin{equation}
\label{eq.fr6}
\mathbf{1}_{\{\tau=\sigma^{\varepsilon}_{\theta}\}}Y_{\tau+}\le \mathbf{1}_{\{\tau=\sigma^{\varepsilon}_{\theta}\}}Y_{\tau}.
\end{equation}
The above inequality may not hold only in case $\Delta^+R^-_{\sigma^{\varepsilon}_{\theta}}>0$.
The last relation implies, by using the {\em minimality condition}, that $Y_{\sigma^{\varepsilon}_{\theta}}=U_{\sigma^{\varepsilon}_{\theta}}$.
On the other hand, by the definition of $\sigma^{\varepsilon}_{\theta}$, $U_t-Y_t\ge\varepsilon,\, t\in [\theta,\sigma^{\varepsilon}_{\theta})$.
This and the previous equation force left positive jump, so $\Delta R^-_{\sigma^{\varepsilon}_{\theta}}>0$. Consequently, by the {\em minimality condition}, $Y_{\sigma^{\varepsilon}_{\theta}-}=\underleftarrow{U}_{\sigma^{\varepsilon}_{\theta}}$. This contradicts the relation 
$U_t-Y_t\ge\varepsilon,\, t\in [\theta,\sigma^{\varepsilon}_{\theta})$. Therefore, \eqref{eq.fr6} must hold. Combining \eqref{eq.fr3}--\eqref{eq.fr6}
gives 
\[
Y_{\tau\lfloor A}\mathbf{1}_{\{\tau\le\sigma^{\varepsilon}_{\theta}\}}+Y_{\sigma^{\varepsilon}_{\theta}\lfloor G^{\varepsilon}}\mathbf{1}_{\{\sigma^{\varepsilon}_{\theta}<\tau\}}
\le Y_{\tau\wedge \sigma^{\varepsilon}_{\theta}}.
\] 
With the aid of  monotonicity of the operator  $\mathbb{E}^f_{\theta,\tau\wedge\sigma^{\varepsilon}_{\theta}}$ and the fact that $Y$ is 
$\mathbb{E}^f$-supermartingale on $[[\theta,\sigma^{\varepsilon}_{\theta}]]$, we easily deduce from the above inequality the result.
\end{proof}

\begin{lemma}\label{16sierpnia2}
Let $(Y,Z,R)$ be a solution to \textnormal{RBSDE}$^T(\xi,f,L,U)$. We have the following inequalities:
\begin{equation}\label{16sierpnia3}
\mathbb{E}^f_{\theta,\tau\wedge\sigma^{\varepsilon}_{\theta}}J(\tau\lfloor A,\sigma^{\varepsilon}_{\theta}\lfloor G^{\varepsilon})-C\varepsilon\le Y_{\theta}\le \mathbb{E}^f_{\theta,\tau^{\varepsilon}_{\theta}\wedge\sigma}J(\tau^{\varepsilon}_{\theta}\lfloor H^{\varepsilon},\sigma\lfloor B)+C\varepsilon,
\end{equation}
\end{lemma}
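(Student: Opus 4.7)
The two inequalities in \eqref{16sierpnia3} are proved symmetrically; I will focus on the right-hand one and indicate the modifications for the left-hand one. The starting point is Lemma \ref{13sierpnia2}(ii), which gives
\[
Y_\theta\le \mathbb{E}^f_{\theta,\tau^\varepsilon_\theta\wedge\sigma}\bigl(Y_{\tau^\varepsilon_\theta\lfloor H^\varepsilon}\mathbf1_{\{\tau^\varepsilon_\theta\le\sigma\}}+Y_{\sigma\lfloor B}\mathbf1_{\{\sigma<\tau^\varepsilon_\theta\}}\bigr).
\]
The goal is to show that the random variable inside $\mathbb{E}^f$ is pointwise dominated by $J(\tau^\varepsilon_\theta\lfloor H^\varepsilon,\sigma\lfloor B)+\varepsilon$; then monotonicity of the nonlinear expectation (Proposition \ref{nonlinprop}(ii)) and a Lipschitz-type estimate finish the argument.

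To obtain the pointwise bound I would split into the two natural sets. On $\{\sigma<\tau^\varepsilon_\theta\}$ we have $\sigma<T$ and $J=U^l_{\sigma\lfloor B}$. Using $Y\le U$ pointwise together with the fact that $Y$ is regulated, on $B$ we have $Y_\sigma\le U_\sigma$, while on $B^c$ the relation $Y_{\sigma+}=\lim_{t\downarrow\sigma}Y_t\le\liminf_{t\downarrow\sigma}U_t=\underleftarrow U_\sigma$ follows from the pointwise barrier inequality passed to the liminf. This yields $Y_{\sigma\lfloor B}\le U^l_{\sigma\lfloor B}$ on this set. On $\{\tau^\varepsilon_\theta\le\sigma\}$ I would invoke Lemma \ref{13sierpnia2}(i) to write $Y_{\tau^\varepsilon_\theta\lfloor H^\varepsilon}\le L^{\xi,u}_{\tau^\varepsilon_\theta\lfloor H^\varepsilon}+\varepsilon$, and distinguish the subcases $\{\tau^\varepsilon_\theta\le\sigma<T\}$ (where $\tau^\varepsilon_\theta<T$ forces $L^{\xi,u}=L^u$, matching $J=L^u_{\tau^\varepsilon_\theta\lfloor H^\varepsilon}$) and $\{\tau^\varepsilon_\theta=\sigma=T\}$ (where $Y_T=\xi$ and $J=\xi$; here one also uses $\{\tau^\varepsilon_\theta=T\}\subset H^\varepsilon$, which holds because $Y_T=\xi=L^\xi_T$ gives $(\omega,T)\in A^\varepsilon$ for every $\omega$).

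Having established the pointwise bound, Proposition \ref{nonlinprop}(ii) gives $Y_\theta\le\mathbb{E}^f_{\theta,\tau^\varepsilon_\theta\wedge\sigma}(J+\varepsilon)$. The final step $\mathbb{E}^f_{\theta,\tau^\varepsilon_\theta\wedge\sigma}(J+\varepsilon)\le\mathbb{E}^f_{\theta,\tau^\varepsilon_\theta\wedge\sigma}(J)+C\varepsilon$ is an immediate consequence of Proposition \ref{nonlinprop}(v) applied with $f_1=f_2=f$ and $\beta_1=\beta_2=\tau^\varepsilon_\theta\wedge\sigma$: both correction integrals vanish and only the $|J+\varepsilon-J|^2=\varepsilon^2$ term survives. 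The left-hand inequality in \eqref{16sierpnia3} is proved in parallel: now start from the supermartingale bound in Lemma \ref{13sierpnia2}(ii) on $[[\theta,\sigma^\varepsilon_\theta]]$, use $Y\ge L$ (and $Y_{\tau+}\ge\overleftarrow L_\tau$ on $A^c$) on $\{\tau\le\sigma^\varepsilon_\theta\}$, and apply Lemma \ref{13sierpnia2}(i) on $\{\sigma^\varepsilon_\theta<\tau\}$ to deduce $Y_{\sigma^\varepsilon_\theta\lfloor G^\varepsilon}\ge U^{\xi,l}_{\sigma^\varepsilon_\theta\lfloor G^\varepsilon}-\varepsilon$.

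The main technical obstacle is the careful bookkeeping of the right-limit evaluations $Y_{\sigma+}$, $Y_{\tau^\varepsilon_\theta+}$ on the ``$H^c$-branches'' and their comparison with $\underleftarrow U_\sigma,\overleftarrow L_\tau$; the key observation that unlocks this is that the pointwise inequality $L\le Y\le U$ combined with the regulated nature of $Y$ forces the liminf/limsup envelopes to control the one-sided limits of $Y$. A secondary accounting point is the terminal-time case, where the inclusions $\{\tau^\varepsilon_\theta=T\}\subset H^\varepsilon$ and $\{\sigma^\varepsilon_\theta=T\}\subset G^\varepsilon$ (coming from $Y_T=\xi$) make the $\xi$-term of $J$ match $Y$ exactly, without producing any additional $\varepsilon$-loss.
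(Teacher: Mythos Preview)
Your proposal is correct and follows essentially the same route as the paper: start from Lemma \ref{13sierpnia2}(ii), dominate the $Y$-expression pointwise by $J(\tau^\varepsilon_\theta\lfloor H^\varepsilon,\sigma\lfloor B)+\varepsilon$ using Lemma \ref{13sierpnia2}(i) together with the barrier inequality $Y\le U$ (passed to the $\liminf$ on $B^c$), then apply Proposition \ref{nonlinprop}(ii),(v). Your handling of the terminal-time case and of the right-limit comparisons $Y_{\sigma+}\le\underleftarrow U_\sigma$, $Y_{\tau+}\ge\overleftarrow L_\tau$ is in fact more explicit than the paper's, which simply writes $Y_{\sigma\lfloor B}=Y^l_{\sigma\lfloor B}\le U^l_{\sigma\lfloor B}$ without unpacking the $B^c$-branch.
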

where $C$ is a constant depending only on $\lambda,\mu,\kappa,T, \|g\|_{L^1},\gamma$.
\begin{proof}
Let $\theta\in\mathcal{T}$ and $\varepsilon>0$. We shall show  the first  inequality  in \eqref{16sierpnia3} (the proof of the other one is analogous).
By Lemma \ref{13sierpnia2}
\begin{equation}\label{16sierpnia4}
Y_{\theta}\le\mathbb{E}^f_{\theta,\tau^{\varepsilon}_{\theta}\wedge\sigma}(Y_{\tau^{\varepsilon}_{\theta}\lfloor H^{\varepsilon}}\mathbf{1}_{\{\tau^{\varepsilon}_{\theta}\le\sigma\}}+Y_{\sigma\lfloor B}\mathbf{1}_{\{\sigma<\tau^{\varepsilon}_{\theta}\}}).
\end{equation}
We have 
\[
Y_{\tau^{\varepsilon}_{\theta}\lfloor H^{\varepsilon}}\mathbf{1}_{\{\tau^{\varepsilon}_{\theta}\le\sigma\}}+Y_{\sigma\lfloor B}\mathbf{1}_{\{\sigma<\tau^{\varepsilon}_{\theta}\}}=Y_{\tau^{\varepsilon}_{\theta}\lfloor H^{\varepsilon}}\mathbf{1}_{\{\tau^{\varepsilon}_{\theta}\le\sigma<T\}}+
Y_{\sigma\lfloor B}\mathbf{1}_{\{\sigma<\tau^{\varepsilon}_{\theta}\}}+\xi\mathbf{1}_{\{\tau^{\varepsilon}_{\theta}=\delta=T\}}.
\]
By Lemma \ref{13sierpnia2},  $Y_{\tau^{\varepsilon}_{\theta}\lfloor H^{\varepsilon}}\le L^u_{\tau^{\varepsilon}_{\theta}\lfloor H^{\varepsilon}}+\varepsilon$. Moreover, since $Y\le U$ and $Y$ is right-limited, we have  $Y_{\sigma\lfloor B}=Y^l_{\sigma\lfloor B}\le U^l_{\sigma\lfloor B}$. Consequently, 
\begin{equation*}
\begin{split}
Y_{\tau^{\varepsilon}_{\theta}\lfloor H^{\varepsilon}}\mathbf{1}_{\{\tau^{\varepsilon}_{\theta}\le\sigma\}}+Y_{\sigma\lfloor B}\mathbf{1}_{\{\sigma<\tau^{\varepsilon}_{\theta}\}}&\le(L^u_{\tau^{\varepsilon}_{\theta}\lfloor H^{\varepsilon}}+\varepsilon)\mathbf{1}_{\{\tau^{\varepsilon}_{\theta}\le\sigma<T\}}+U^l_{\sigma\lfloor B}\mathbf{1}_{\{\sigma<\tau^{\varepsilon}_{\theta}\}}\\
&\quad+\xi\mathbf{1}_{\{\tau^{\varepsilon}_{\theta}=\sigma=T\}}\le J(\tau^{\varepsilon}_{\theta}\lfloor H^{\varepsilon},\sigma\lfloor B)+\varepsilon.
\end{split}
\end{equation*}
By \eqref{16sierpnia4} and by properties of the operator $\mathbb{E}^f$ (see Proposition \ref{nonlinprop} (ii) and (v)), we get
\begin{equation}\label{16sierpnia5}
Y_{\theta}\le\mathbb{E}^f_{\theta,\tau^{\varepsilon}_{\theta}\wedge\sigma}(J(\tau^{\varepsilon}_{\theta}\lfloor H^{\varepsilon},\sigma\lfloor B)+\varepsilon)\le\mathbb{E}^f_{\theta,\tau^{\varepsilon}_{\theta}\wedge\sigma}(J(\tau^{\varepsilon}_{\theta}\lfloor H^{\varepsilon},\sigma\lfloor B))+C\varepsilon.
\end{equation}
\end{proof}

\begin{theorem}\label{17sierpnia1}
Let $(Y,Z,R)$ be a solution to \textnormal{RBSDE}$^T(\xi,f,L,U)$. The extended $\mathbb{E}^f$-Dynkin game has a value. What is more, for any stopping time $\theta\in\mathcal{T}$,
\[
\underline{V}(\theta)=Y_{\theta}=\overline{V}(\theta).
\]
Moreover, for every $\theta\in\mathcal{T}$ and $\varepsilon>0$ the pair of stopping systems $(\tau^{\varepsilon}_{\theta}\lfloor H^{\varepsilon},\delta^{\varepsilon}_{\theta}\lfloor G^{\varepsilon})$ defined in \eqref{dyn39} is $\varepsilon$-saddle point in time $\theta$ for extended $\mathbb{E}^f$-Dynkin game, i.e. satisfies inequalities \eqref{13sierpnia3}.
\end{theorem}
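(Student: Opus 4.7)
The plan is to deduce the theorem directly from the two-sided bound established in Lemma~\ref{16sierpnia2} by taking essential suprema and infima of both sides and then sending $\varepsilon\downarrow 0$. Recall that for every $\theta\in\mathcal T$, every $\varepsilon>0$, and every $\tau\lfloor A,\sigma\lfloor B\in\mathcal U_\theta$, Lemma~\ref{16sierpnia2} gives
\[
\mathbb E^f_{\theta,\tau\wedge\sigma^\varepsilon_\theta}J(\tau\lfloor A,\sigma^\varepsilon_\theta\lfloor G^\varepsilon)-C\varepsilon\;\le\;Y_\theta\;\le\;\mathbb E^f_{\theta,\tau^\varepsilon_\theta\wedge\sigma}J(\tau^\varepsilon_\theta\lfloor H^\varepsilon,\sigma\lfloor B)+C\varepsilon,
\]
with a universal constant $C$ independent of $\tau\lfloor A,\sigma\lfloor B$. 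Note that $\tau^\varepsilon_\theta\lfloor H^\varepsilon$ and $\sigma^\varepsilon_\theta\lfloor G^\varepsilon$ are legitimate elements of $\mathcal U_\theta$, since $Y_T=L^\xi_T=U^\xi_T=\xi$ forces $\{\tau^\varepsilon_\theta=T\}\subset H^\varepsilon$ and $\{\sigma^\varepsilon_\theta=T\}\subset G^\varepsilon$.

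I would first take $\mathop{\mathrm{ess\,inf}}_{\sigma\lfloor B\in\mathcal U_\theta}$ of the right-hand inequality; its right-hand side reduces to $\mathop{\mathrm{ess\,inf}}_{\sigma\lfloor B}\mathbb E^f_{\theta,\tau^\varepsilon_\theta\wedge\sigma}J(\tau^\varepsilon_\theta\lfloor H^\varepsilon,\sigma\lfloor B)+C\varepsilon$, which is dominated by $\mathop{\mathrm{ess\,sup}}_{\tau\lfloor A}\mathop{\mathrm{ess\,inf}}_{\sigma\lfloor B}\mathbb E^f J=\underline V(\theta)$ plus $C\varepsilon$; this yields $Y_\theta\le\underline V(\theta)+C\varepsilon$. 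A symmetric argument on the left-hand inequality, first applying $\mathop{\mathrm{ess\,sup}}_{\tau\lfloor A}$ and then $\mathop{\mathrm{ess\,inf}}_{\sigma\lfloor G}$, produces $\overline V(\theta)\le Y_\theta+C\varepsilon$. Combining these with the trivial general inequality $\underline V(\theta)\le\overline V(\theta)$ yields the sandwich
\[
Y_\theta-C\varepsilon\;\le\;\underline V(\theta)\;\le\;\overline V(\theta)\;\le\;Y_\theta+C\varepsilon,
\]
and letting $\varepsilon\downarrow 0$ shows that the value exists and equals $Y_\theta$.

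For the final assertion, the $\varepsilon$-saddle-point property in the sense of \eqref{13sierpnia3} is precisely Lemma~\ref{13sierpnia2}(i) and requires no further argument; the customary game-theoretic saddle-point inequality follows from Lemma~\ref{16sierpnia2} simply by specializing $\sigma\lfloor B=\sigma^\varepsilon_\theta\lfloor G^\varepsilon$ and $\tau\lfloor A=\tau^\varepsilon_\theta\lfloor H^\varepsilon$. No step presents a genuine obstacle, because all the analytic difficulty was absorbed into Lemma~\ref{16sierpnia2} (which in turn rested on the $\mathbb E^f$-martingale characterization of $Y$ on $[[\theta,\tau^\varepsilon_\theta]]$ and $[[\theta,\sigma^\varepsilon_\theta]]$ from Lemma~\ref{19sierpnia1}). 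The only point requiring care is verifying that the essential infima and suprema may be passed through the inequalities, which is legitimate precisely because $C\varepsilon$ is deterministic and independent of the stopping systems involved.
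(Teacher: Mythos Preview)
Your argument is correct and follows essentially the same route as the paper: both proofs apply $\mathop{\mathrm{ess\,inf}}$ and $\mathop{\mathrm{ess\,sup}}$ to the two-sided bound of Lemma~\ref{16sierpnia2}, combine the resulting estimates with the trivial inequality $\underline V(\theta)\le\overline V(\theta)$, and let $\varepsilon\downarrow 0$. Your additional remarks (that $\tau^\varepsilon_\theta\lfloor H^\varepsilon,\sigma^\varepsilon_\theta\lfloor G^\varepsilon\in\mathcal U_\theta$ because $\{\tau^\varepsilon_\theta=T\}\subset H^\varepsilon$ and $\{\sigma^\varepsilon_\theta=T\}\subset G^\varepsilon$, and that the $\varepsilon$-saddle-point property \eqref{13sierpnia3} is precisely Lemma~\ref{13sierpnia2}(i)) are accurate and fill in details the paper leaves implicit.
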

\begin{proof}
Since right-hand side inequality in \eqref{16sierpnia4} is satisfied for all $\sigma\lfloor B\in\mathcal{U}_{\theta}$ we have that
\begin{equation*}
\begin{split}
Y_{\theta}&\le\mathop{\mathrm{ess\,inf}}_{\sigma\lfloor B\in\mathcal{U}_{\theta}}\mathbb{E}^f_{\theta,\tau^{\varepsilon}_{\theta}\wedge\sigma}(J(\tau^{\varepsilon}_{\theta}\lfloor H^{\varepsilon},\sigma\lfloor B))+C\varepsilon\\
&\le\mathop{\mathrm{ess\,sup}}_{\tau\lfloor A\in\mathcal{U}_{\theta}}\mathop{\mathrm{ess\,inf}}_{\sigma\lfloor B\in\mathcal{U}_{\theta}}\mathbb{E}^f_{\theta,\tau\wedge\sigma}(J(\tau\lfloor A,\sigma\lfloor B))+C\varepsilon.
\end{split}
\end{equation*}
Thus, by the definition of $\underline{V}_{\theta}$ (see \eqref{13sierpnia1}) we have that $Y_{\theta}\le\underline{V}(\theta)+C\varepsilon$. Similarly, we show that $\overline{V}(\theta)-C\varepsilon\le Y_{\theta}$ for all $\varepsilon>0$. In consequence, $\overline{V}(\theta)\le Y_{\theta}\le\underline{V}(\theta)$, which combined with the obvious inequality $\underline{V}(\theta)\le\overline{V}(\theta)$ gives us $\underline{V}(\theta)=Y_{\theta}=\overline{V}(\theta)$.
\end{proof}

\medskip

\section{Nonlinear Dynkin games}\label{roz7}

Throughout the section, we assume that (H1)--(H5), (Z) are in force and that $L$ and $U$ are $\mathbb{F}$-optional processes of class (D).

For  $\tau,\sigma\in\mathcal T$ we define the pay-off 
\begin{equation}\label{dyn271}
J_0(\tau,\sigma):=L_{\tau}\mathbf{1}_{\{\tau \le\sigma<T\}}+U_{\sigma}\mathbf{1}_{\{\sigma<\tau\}}+\xi\mathbf{1}_{\{\tau=\sigma=T\}}.
\end{equation}

\begin{definition}
Let $\theta\in\mathcal{T}$. Upper and lower value of the game are defined respectively as
\begin{equation}\label{13sierpnia1pr}
\begin{split}
&\overline{V}_0(\theta):=\mathop{\mathrm{ess\,inf}}_{\sigma\ge \theta}\mathop{\mathrm{ess\,sup}}_{\tau\ge \theta}\mathbb{E}^f_{\theta,\tau\wedge\sigma}J_0(\tau,\sigma);\\
&\underline{V}_0(\theta):=\mathop{\mathrm{ess\,sup}}_{\tau\ge \theta}\mathop{\mathrm{ess\,inf}}_{\sigma\ge \theta}\mathbb{E}^f_{\theta,\tau\wedge\sigma}J_0(\tau,\sigma).
\end{split}
\end{equation}
\end{definition}

\begin{lemma}\label{dyn6}
Let $(Y,Z,R)$ be a solution to \textnormal{RBSDE}$^T(\xi,f,L,U)$. 
Assume that $L$ is right upper semicontinuous and $U$ is right lower semicontinuous. 
Then 
\begin{equation}\label{dyn7}
Y_{\tau^{\varepsilon}_{\theta}}\le L^{\xi}_{\tau^{\varepsilon}_{\theta}}+\varepsilon,\quad\quad Y_{\sigma^{\varepsilon}_{\theta}}\ge U^{\xi}_{\sigma^{\varepsilon}_{\theta}}-\varepsilon.
\end{equation}
\end{lemma}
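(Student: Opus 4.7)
The plan is to prove the first inequality; the second follows by a symmetric argument using right lower semicontinuity of $U$ and the minimality condition for $R^-$. The key observation is that, on the event $\{\tau^\varepsilon_\theta=T\}$, one has $Y_T=\xi=L^\xi_T$, so the desired inequality holds trivially. Thus it suffices to establish the inequality on $\{\tau^\varepsilon_\theta<T\}$, on which $L^\xi_{\tau^\varepsilon_\theta}=L_{\tau^\varepsilon_\theta}$. Comparing with the extended case (Lemma \ref{13sierpnia2}(i)), the role of the extra set $H^\varepsilon$ is now to be eliminated; I will argue that, under right upper semicontinuity of $L$, the set
\[
H^{\varepsilon,c}\cap\{\tau^\varepsilon_\theta<T\}=\{Y_{\tau^\varepsilon_\theta}>L_{\tau^\varepsilon_\theta}+\varepsilon\}\cap\{\tau^\varepsilon_\theta<T\}
\]
is negligible.

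So suppose for contradiction that $\omega$ belongs to this set. By definition of $\tau^\varepsilon_\theta$ as an infimum, there exists a sequence $t_n\searrow\tau^\varepsilon_\theta$ (depending on $\omega$) with $Y_{t_n}\le L_{t_n}+\varepsilon$ for all large $n$ (eventually $t_n<T$, so $L^\xi_{t_n}=L_{t_n}$). Since $Y$ is regulated, $Y_{t_n}\to Y_{\tau^\varepsilon_\theta+}$, hence
\[
Y_{\tau^\varepsilon_\theta+}\le\limsup_n L_{t_n}+\varepsilon\le\overleftarrow{L}_{\tau^\varepsilon_\theta}+\varepsilon.
\]
Right upper semicontinuity of $L$ yields $\overleftarrow{L}_{\tau^\varepsilon_\theta}\le L_{\tau^\varepsilon_\theta}$, so
\[
Y_{\tau^\varepsilon_\theta+}\le L_{\tau^\varepsilon_\theta}+\varepsilon<Y_{\tau^\varepsilon_\theta}.
\]

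The next step is to translate this strict inequality into information about $R$ via the equation for $Y$. Evaluating the RBSDE between $\tau^\varepsilon_\theta$ and $s\downarrow\tau^\varepsilon_\theta$, the Lebesgue and stochastic integral parts vanish in the limit, giving $Y_{\tau^\varepsilon_\theta}-Y_{\tau^\varepsilon_\theta+}=\Delta^+R_{\tau^\varepsilon_\theta}$. In particular $\Delta^+R_{\tau^\varepsilon_\theta}>0$. Because $R^+,R^-$ is the Jordan decomposition, the measures are mutually singular, so a positive jump of $R$ is carried by $R^+$; equivalently $\max\{R_{\tau^\varepsilon_\theta+}-R_{\tau^\varepsilon_\theta},0\}>0$ and $\Delta^+R^-_{\tau^\varepsilon_\theta}=0$. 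The second identity in the minimality condition \eqref{eq.2903} then forces $Y_{\tau^\varepsilon_\theta}=L_{\tau^\varepsilon_\theta}$, which contradicts $Y_{\tau^\varepsilon_\theta}>L_{\tau^\varepsilon_\theta}+\varepsilon$. Hence $H^{\varepsilon,c}\cap\{\tau^\varepsilon_\theta<T\}$ is $P$-negligible, as claimed.

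The only subtle point in the argument, and where I expect the most care to be needed, is the passage from the strict right jump of $Y$ to the identification $Y_{\tau^\varepsilon_\theta}=L_{\tau^\varepsilon_\theta}$ via the minimality condition; it relies crucially on the mutual singularity of $R^+$ and $R^-$ at jump times of $R$, together with the fact that the continuous $f$ and martingale integrals do not contribute to $\Delta^+Y$. The symmetric statement for $\sigma^\varepsilon_\theta$ is obtained by replacing $L$ by $-U$, $\tau^\varepsilon_\theta$ by $\sigma^\varepsilon_\theta$, using right lower semicontinuity of $U$, and invoking the second minimality identity controlling $R^-$ at right jumps.
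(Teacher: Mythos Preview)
Your proof is correct and uses essentially the same ingredients as the paper's: the contradiction on $\{\tau^\varepsilon_\theta<T,\ Y_{\tau^\varepsilon_\theta}>L_{\tau^\varepsilon_\theta}+\varepsilon\}$ via the approximating sequence from the definition of $\tau^\varepsilon_\theta$, right upper semicontinuity of $L$, the relation $\Delta^+Y=-\Delta^+R$, and the minimality condition for $R^+$. The only difference is the order of the steps: the paper first invokes minimality (since $Y_{\tau^\varepsilon_\theta}>L_{\tau^\varepsilon_\theta}$ forces $\Delta^+R^+_{\tau^\varepsilon_\theta}=0$, hence $Y_{\tau^\varepsilon_\theta+}\ge Y_{\tau^\varepsilon_\theta}>L_{\tau^\varepsilon_\theta}+\varepsilon$) and then derives the contradicting limit inequality $Y_{\tau^\varepsilon_\theta+}\le L_{\tau^\varepsilon_\theta}+\varepsilon$, whereas you derive the limit inequality first and then apply minimality in its contrapositive form; the two arguments are logically equivalent.
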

\begin{proof}
In case $\tau^{\varepsilon}_{\theta}=T$,   \eqref{dyn7} is obvious. 
Suppose that  $\tau^{\varepsilon}_{\theta}<T$. 
Suppose by contradiction that  
$P(Y_{\tau^{\varepsilon}_{\theta}}> L^{\xi}_{\tau^{\varepsilon}_{\theta}}+\varepsilon)>0$. 
Without loss of generality we may assume that properties attributed to  $Y,M,R,L,U$ and holding  
$P$-a.s. hold for any  $\omega\in\Omega$. 
Fix  $\omega\in \{Y_{\tau^{\varepsilon}_{\theta}}> L^{\xi}_{\tau^{\varepsilon}_{\theta}}+\varepsilon\}$.
By the minimality condition  $\Delta^+R^+_{\tau^{\varepsilon}_{\theta}}(\omega)=0$, 
and so $\Delta^+Y_{\tau^{\varepsilon}_{\theta}}(\omega)=-(\Delta^+R^+_{\tau^{\varepsilon}_{\theta}}-\Delta^+R^-_{\tau^{\varepsilon}_{\theta}})(\omega)=\Delta^+R^-_{\tau^{\varepsilon}_{\theta}}(\omega)>0$. 
Therefore
\begin{equation}\label{dyn8}
Y_{(\tau^{\varepsilon}_{\theta})+}(\omega)>L^{\xi}_{\tau^{\varepsilon}_{\theta}}(\omega)+\varepsilon.
\end{equation}
Take $\omega\in\Omega$. By the definition of $\tau^{\varepsilon}_{\theta}$ 
there exists a non-increasing sequence  $(t_n(\omega))\searrow\tau^{\varepsilon}_{\theta}(\omega)$ such that  $Y_{t_n}(\omega)\le L^{\xi}_{t_n}(\omega)+\varepsilon$ for any  $n\in\mathbb{N}$. Hence  $\limsup_{n\rightarrow\infty}Y_{t_n}(\omega)\le\limsup_{n\rightarrow\infty}L^{\xi}_{t_n}(\omega)+\varepsilon$. 
By the assumptions made  $L$ is right upper semicontinuous, thus   $\limsup_{n\rightarrow\infty}L^{\xi}_{t_n}(\omega)\le L^{\xi}_{\tau^{\varepsilon}_{\theta}}(\omega)$. 
On the other hand  $t_n(\omega)\searrow\tau^{\varepsilon}_{\theta}(\omega)$ implies  $\limsup_{n\rightarrow\infty}Y_{t_n}(\omega)=Y_{(\tau^{\varepsilon}_{\theta})+}(\omega)$. Consequently, $Y_{(\tau^{\varepsilon}_{\theta})+}(\omega)\le L^{\xi}_{\tau^{\varepsilon}_{\theta}}(\omega)+\varepsilon$, which contradicts  \eqref{dyn8}. From this we deduce that  $Y_{\tau^{\varepsilon}_{\theta}}\le L^{\xi}_{\tau^{\varepsilon}_{\theta}}+\varepsilon$ for  $P$-a.e. $\omega\in\Omega$. 
\end{proof}

\begin{theorem}\label{dyn20}
Assume that  $L$ is right upper semicontinuous and $U$ is right lower semicontinuous.
Let  $(Y,Z,R)$ be a solution to   \textnormal{RBSDE}$^T(\xi,f,L,U)$. Then for any $\theta\in\mathcal{T}$
\begin{equation}\label{dyn9}
Y_{\theta}=\overline{V}_0(\theta)=\underline{V}_0(\theta).
\end{equation}
Moreover,  for any  $(\tau,\sigma)\in\mathcal{T}_{\theta}\times \mathcal T_\theta$
\begin{equation}\label{dyn10}
\mathbb{E}^f_{\theta,\tau\wedge\sigma^{\varepsilon}_{\theta}}J_0(\tau,\sigma^{\varepsilon}_{\theta})-C\varepsilon\le Y_{\theta}\le\mathbb{E}^f_{\theta,\tau^{\varepsilon}_{\theta}\wedge\sigma}J_0(\tau^{\varepsilon}_{\theta},\sigma)+C\varepsilon,
\end{equation}
where $C$ is a constant depending only on $\lambda,\mu,\alpha,T, \|g\|_{L^1},\gamma$.
\end{theorem}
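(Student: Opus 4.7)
The plan is to prove the sandwich inequality \eqref{dyn10} first, from which \eqref{dyn9} follows by taking $\mathop{\mathrm{ess\,sup}}_{\tau\ge\theta}$, $\mathop{\mathrm{ess\,inf}}_{\sigma\ge\theta}$, sending $\varepsilon\to 0$, and combining with the trivial inequality $\underline{V}_0(\theta)\le \overline{V}_0(\theta)$. The overall strategy mirrors Lemma \ref{16sierpnia2} and Theorem \ref{17sierpnia1} for the extended game, but the regularity of $L$ and $U$ lets us dispense with the stopping systems $\tau\lfloor H$ and the envelopes $L^u, U^l$, working instead with plain stopping times and the pay-off $J_0$.

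First, for fixed $\theta\in\mathcal T$ and $\varepsilon>0$, I will invoke Lemma \ref{19sierpnia1}: $Y$ is an $\mathbb{E}^f$-submartingale on $[[\theta,\tau^\varepsilon_\theta]]$ and an $\mathbb{E}^f$-supermartingale on $[[\theta,\sigma^\varepsilon_\theta]]$. For any $\sigma\in\mathcal T_\theta$, since $\tau^\varepsilon_\theta\wedge\sigma$ lies in $[[\theta,\tau^\varepsilon_\theta]]$, the submartingale property gives
\[
Y_\theta\le \mathbb{E}^f_{\theta,\tau^\varepsilon_\theta\wedge\sigma}(Y_{\tau^\varepsilon_\theta\wedge\sigma}),
\qquad
Y_{\tau^\varepsilon_\theta\wedge\sigma}=Y_{\tau^\varepsilon_\theta}\mathbf 1_{\{\tau^\varepsilon_\theta\le\sigma\}}+Y_\sigma\mathbf 1_{\{\sigma<\tau^\varepsilon_\theta\}}.
\]
Similarly, for any $\tau\in\mathcal T_\theta$, $Y_\theta\ge \mathbb{E}^f_{\theta,\tau\wedge\sigma^\varepsilon_\theta}(Y_{\tau\wedge\sigma^\varepsilon_\theta})$.

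Next, I will use Lemma \ref{dyn6}, which is precisely the place where right upper semicontinuity of $L$ and right lower semicontinuity of $U$ enter: $Y_{\tau^\varepsilon_\theta}\le L^\xi_{\tau^\varepsilon_\theta}+\varepsilon$ and $Y_{\sigma^\varepsilon_\theta}\ge U^\xi_{\sigma^\varepsilon_\theta}-\varepsilon$. Splitting $\{\tau^\varepsilon_\theta\le\sigma\}=\{\tau^\varepsilon_\theta\le\sigma<T\}\cup\{\tau^\varepsilon_\theta=\sigma=T\}$, using $L^\xi_{\tau^\varepsilon_\theta}=L_{\tau^\varepsilon_\theta}$ on the first set and $Y_T=\xi$ on the second, and using $Y_\sigma\le U_\sigma$ on $\{\sigma<\tau^\varepsilon_\theta\}$, I conclude
\[
Y_{\tau^\varepsilon_\theta\wedge\sigma}\le J_0(\tau^\varepsilon_\theta,\sigma)+\varepsilon.
\]
A symmetric computation gives $Y_{\tau\wedge\sigma^\varepsilon_\theta}\ge J_0(\tau,\sigma^\varepsilon_\theta)-\varepsilon$. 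Applying the monotonicity of $\mathbb{E}^f$ (Proposition \ref{nonlinprop}(ii)) together with the Lipschitz-type estimate in Proposition \ref{nonlinprop}(v) to absorb the constant $\varepsilon$ into a term $C\varepsilon$, yields exactly \eqref{dyn10}.

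Finally, passing to $\mathop{\mathrm{ess\,inf}}_{\sigma\ge\theta}$ in the upper bound and then $\mathop{\mathrm{ess\,sup}}_{\tau\ge\theta}$ gives $Y_\theta\le\underline{V}_0(\theta)+C\varepsilon$; analogously $\overline{V}_0(\theta)-C\varepsilon\le Y_\theta$. Letting $\varepsilon\downarrow 0$ and using $\underline{V}_0(\theta)\le\overline{V}_0(\theta)$ yields \eqref{dyn9}. I expect no real obstacle in this argument: the substantive work is concentrated in Lemma \ref{dyn6}, which has already been done, and the rest is a direct adaptation of the extended-game proof with ordinary stopping times in place of stopping systems. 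The only subtlety to double-check is that the constant $C$ in \eqref{dyn10} depends only on the parameters listed (which is clear from Proposition \ref{nonlinprop}(v), noting that the process $Y$ — being fixed — contributes a constant moment bound absorbed into $C$).
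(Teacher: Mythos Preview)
Your proposal is correct and follows essentially the same approach as the paper's own proof: invoke Lemma \ref{19sierpnia1} for the $\mathbb{E}^f$-sub/supermartingale property, apply Lemma \ref{dyn6} and the barrier inequalities to bound $Y_{\tau^\varepsilon_\theta\wedge\sigma}$ by $J_0(\tau^\varepsilon_\theta,\sigma)+\varepsilon$ (and symmetrically), use Proposition \ref{nonlinprop}(ii),(v) to obtain \eqref{dyn10}, and conclude \eqref{dyn9} by the ess\,inf/ess\,sup argument and letting $\varepsilon\to 0$. The only remark is that the constant $C$ in \eqref{dyn10} comes directly from Proposition \ref{nonlinprop}(v) and does not involve moments of $Y$, so your parenthetical comment at the end is unnecessary.
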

\begin{proof}
Let  $\theta\in\mathcal{T}$  and  $\varepsilon>0$. We shall prove that  
$(\tau^{\varepsilon}_{\theta},\sigma^{\varepsilon}_{\theta})$ satisfies  \eqref{dyn10}. 
By Lemma  \ref{19sierpnia1}  $Y$ is an  $\mathbb{E}^f$-submartingale  on $[[\theta,\tau^{\varepsilon}_{\theta}]]$.
We thus have
\begin{equation}\label{dyn11}
Y_{\theta}\le\mathbb{E}^f_{\theta,\tau^{\varepsilon}_{\theta}\wedge\sigma}[Y_{\tau^{\varepsilon}_{\theta}\wedge\sigma}].
\end{equation}
By the assumptions made on  $L$ and Lemma  \ref{dyn6}, $Y_{\tau^{\varepsilon}_{\theta}}\le L_{\tau^{\varepsilon}_{\theta}}+\varepsilon$. 
From this and the fact that    $Y\le U$ we have
\begin{equation*}
Y_{\tau^{\varepsilon}_{\theta}\wedge\sigma}\le (L_{\tau^{\varepsilon}_{\theta}}+\varepsilon)\mathbf{1}_{\{\tau^{\varepsilon}_{\theta}\le\sigma<T\}}+U_{\sigma}\mathbf{1}_{\{\sigma<\tau^{\varepsilon}_{\theta}\}}+\xi\mathbf{1}_{\{\tau^{\varepsilon}_{\theta}=\sigma=T\}}\le J_0(\tau^{\varepsilon}_{\theta},\sigma)+\varepsilon.
\end{equation*}
Applying  \eqref{dyn11} and properties of the operator $\mathbb{E}^f$ (see Proposition   \ref{nonlinprop} (ii) and (v)) yields
\begin{equation}\label{dyn12}
Y_{\theta}\le\mathbb{E}^f_{\theta,\tau^{\varepsilon}_{\theta}\wedge\sigma}(J_0(\tau^{\varepsilon}_{\theta},\sigma)+\varepsilon)\le\mathbb{E}^f_{\theta,\tau^{\varepsilon}_{\theta}\wedge\sigma}J_0(\tau^{\varepsilon}_{\theta},\sigma)+C\varepsilon.
\end{equation}
By Lemma \ref{19sierpnia1}  $Y$ is an $\mathcal{E}^f$-supermartingale  on  $[[\theta,\sigma^{\varepsilon}_{\theta}]]$. As a result
\begin{equation}\label{dyn13}
Y_{\theta}\ge \mathbb{E}^f_{\theta,\tau\wedge\sigma^{\varepsilon}_{\theta}}(Y_{\tau\wedge\sigma^{\varepsilon}_{\theta}}).
\end{equation}
By the assumptions made on $U$ and Lemma  \ref{dyn6} we have  
$Y_{\sigma^{\varepsilon}_{\theta}}\ge U_{\sigma^{\varepsilon}_{\theta}}-\varepsilon$. 
Applying analogous arguments as in case of $L$ yields
$Y_{\theta}\ge\mathbb{E}^f_{\theta,\tau\wedge\sigma^{\varepsilon}_{\theta}}J_0(\tau,\sigma^{\varepsilon}_{\theta})-C\varepsilon$, 
 which combined with \eqref{dyn12} gives  \eqref{dyn10}. Consequently,
\begin{equation*}
Y_{\theta}\le \mathop{\mathrm{ess\,inf}}_{\sigma\ge \theta}\mathbb{E}^f_{\theta,\tau^{\varepsilon}_{\theta}\wedge\sigma}J_0(\tau^{\varepsilon}_{\theta},\sigma)+\varepsilon\le\mathop{\mathrm{ess\,sup}}_{\tau\ge \theta} \mathop{\mathrm{ess\,inf}}_{\sigma\ge \theta}\mathbb{E}^f_{\theta,\tau\wedge\sigma}J_0(\tau,\sigma)+\varepsilon,
\end{equation*}
which combined with the definition of  $\underline{V}_0(\theta)$ yields  $Y_{\theta}\le\underline{V}(\theta)+\varepsilon$. 
Analogous reasoning gives $\overline{V}(\theta)-\varepsilon\le Y_{\theta}$. Letting  $\varepsilon\to 0$ we find that  $\overline{V}(\theta)\le Y_{\theta}\le\underline{V}(\theta)$, which combined with the obvious inequality $\underline{V}(\theta)\le\overline{V}(\theta)$ gives $\underline{V}(\theta)=Y_{\theta}=\overline{V}_{\theta}$. 

\end{proof}

\section{Existence of saddle points.}\label{roz8}

In the whole section, we assume that (H1)--(H5), (Z) are in force and that $L$ and $U$ are $\mathbb{F}$-optional processes of class (D).

Let $(Y,Z,R)$ be a solution to \textnormal{RBSDE}$^T(\xi,f,L,U)$. 
We shall prove that there exists a saddle point for a nonlinear Dynkin game with sufficiently regular payoffs. 
For  $\theta\in\mathcal{T}$ we define:
\begin{equation}\label{dyn14}
\tau^*_{\theta}:=\inf\{t\ge\theta,\,Y_t=L^{\xi}_t\}\wedge T;\quad\sigma^*_{\theta}:=\inf\{t\ge\theta,\,Y_t=U^{\xi}_t\}\wedge T
\end{equation}
and
\begin{equation}\label{dyn15}
\begin{split}
&\bar{\tau}_{\theta}:=\inf\{t\ge\theta,\,R^{+}_t>R^{+}_{\theta}\}\wedge T;\quad\bar{\sigma}_{\theta}:=\inf\{t\ge\theta,\,R^{-}_t>R^{-}_{\theta}\}\wedge T.
\end{split}
\end{equation}

\begin{proposition}\label{dyn23pr}
Assume that  $L$ is right upper semicontinuous and $U$ is right lower semicontinuous.
Let  $(Y,Z,R)$ be a solution to   \textnormal{RBSDE}$^T(\xi,f,L,U)$ and let $\theta\in\mathcal T$.
\begin{enumerate}
\item [1)] If $R^{-,*}$ is continuous, then   $Y$ is an  $\mathbb{E}^f$-supermartingale on  $[[\theta,\bar{\sigma}_{\theta}]]$. Moreover,
\begin{equation}\label{dyn16}
Y_{\sigma^*_{\theta}}=U^{\xi}_{\sigma^*_{\theta}}\quad\mathrm{and}\quad Y_{\bar{\sigma}_{\theta}}=U^{\xi}_{\bar{\sigma}_{\theta}}.
\end{equation}
\item[2)] If  $R^{+,*}$ is continuous, then  $Y$ is an  $\mathbb{E}^f$-submartingale on   $[[\theta,\bar{\tau}_{\theta}]]$. Moreover, 
\begin{equation}\label{dyn17}
Y_{\tau^*_{\theta}}=L^{\xi}_{\tau^*_{\theta}}\quad\mathrm{and}\quad Y_{\bar{\tau}_{\theta}}=L^{\xi}_{\bar{\tau}_{\theta}}.
\end{equation}
\end{enumerate}
\end{proposition}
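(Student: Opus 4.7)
The two parts are symmetric, so I focus on part~1 and then indicate how part~2 follows by a mirror argument. The key structural observation is that continuity of $R^{-,*}$ implies $R^-$ is left continuous on $[0,T]$, since the left jumps of a regulated process coincide with those of its c\`adl\`ag part. Combined with $\bar\sigma_\theta=\inf\{t\ge\theta:R^-_t>R^-_\theta\}\wedge T$, this forces $R^-$ to be constant on the \emph{closed} interval $[\theta,\bar\sigma_\theta]$: on $[\theta,\bar\sigma_\theta)$ it equals $R^-_\theta$ by definition, and left continuity at $\bar\sigma_\theta$ extends this to the endpoint. Therefore on $[\theta,\bar\sigma_\theta]$ the reflected equation reduces to
\[
Y_t=Y_{\bar\sigma_\theta}+\int_t^{\bar\sigma_\theta} f(r,Y_r,Z_r)\,dr+(R^+_{\bar\sigma_\theta}-R^+_t)-\int_t^{\bar\sigma_\theta}Z_r\,dB_r,
\]
that is, BSDE$^{\theta,\bar\sigma_\theta}(Y_{\bar\sigma_\theta},f+dR^+)$ with $R^+$ nondecreasing. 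Proposition~\ref{nonlinprop}(i) then yields the $\mathbb{E}^f$-supermartingale property on $[[\theta,\bar\sigma_\theta]]$.

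For $Y_{\sigma^*_\theta}=U^\xi_{\sigma^*_\theta}$, the case $\sigma^*_\theta=T$ is immediate. On $\{\sigma^*_\theta<T\}$, pick $t_n\downarrow\sigma^*_\theta$ with $Y_{t_n}=U_{t_n}$; right lower semicontinuity of $U$ gives $Y_{\sigma^*_\theta+}=\lim_n Y_{t_n}\ge\liminf_n U_{t_n}\ge U_{\sigma^*_\theta}$. Writing the right jump as $Y_{\sigma^*_\theta+}-Y_{\sigma^*_\theta}=\Delta^+R^-_{\sigma^*_\theta}-\Delta^+R^+_{\sigma^*_\theta}$ and distinguishing the cases $\Delta^+R^-_{\sigma^*_\theta}=0$ (combine with $Y\le U$) and $\Delta^+R^-_{\sigma^*_\theta}>0$ (apply the jump minimality $\sum(U-Y)\Delta^+R^-=0$) gives the identity. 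For $Y_{\bar\sigma_\theta}=U^\xi_{\bar\sigma_\theta}$, after handling the trivial $\bar\sigma_\theta=T$, note that on $\{\bar\sigma_\theta<T\}$, $R^-$ strictly exceeds $R^-_\theta$ on every right-neighborhood $(\bar\sigma_\theta,\bar\sigma_\theta+\delta]$. If $\Delta^+R^-_{\bar\sigma_\theta}>0$, apply jump minimality directly; otherwise the required strict growth of $R^-$ just to the right of $\bar\sigma_\theta$ must come either from continuous growth of $R^{-,*}$ or from right jumps at times strictly greater than $\bar\sigma_\theta$. In the first sub-case pick $r_n\downarrow\bar\sigma_\theta$ with $Y_{r_n-}=\underrightarrow{U}_{r_n}$ (from the $dR^{-,*}$ minimality), in the second pick $r_n\downarrow\bar\sigma_\theta$ with $Y_{r_n}=U_{r_n}$; passing to the limit via regularity of $Y$ and right lsc of $U$ gives $Y_{\bar\sigma_\theta+}\ge U_{\bar\sigma_\theta}$. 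Since $\Delta^+R^-_{\bar\sigma_\theta}=0$ we have $Y_{\bar\sigma_\theta+}=Y_{\bar\sigma_\theta}-\Delta^+R^+_{\bar\sigma_\theta}\le Y_{\bar\sigma_\theta}$, and $Y\le U$ forces equality.

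Part~2 follows by the mirror argument: continuity of $R^{+,*}$ makes $R^+$ constant on $[\theta,\bar\tau_\theta]$, so on this interval $R$ is nonincreasing and $Y$ solves BSDE$^{\theta,\bar\tau_\theta}(Y_{\bar\tau_\theta},f-dR^-)$ with $R^-$ nondecreasing, and Proposition~\ref{nonlinprop}(i) yields the $\mathbb{E}^f$-submartingale property. The identities $Y_{\tau^*_\theta}=L^\xi_{\tau^*_\theta}$ and $Y_{\bar\tau_\theta}=L^\xi_{\bar\tau_\theta}$ are obtained by the same scheme, with right upper semicontinuity of $L$ in place of right lsc of $U$ and the minimality relations $(Y_r-L_r)\Delta^+R^+_r=0$ and $(Y_{r-}-\overrightarrow{L}_r)\,dR^{+,*}_r=0$ in place of the $R^-$/$U$ counterparts.

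The main obstacle is the boundary identity $Y_{\bar\sigma_\theta}=U^\xi_{\bar\sigma_\theta}$ (and, symmetrically, $Y_{\bar\tau_\theta}=L^\xi_{\bar\tau_\theta}$) in the delicate sub-case $\Delta^+R^-_{\bar\sigma_\theta}=0$: there one must transfer information from the strict increase of $R^-$ on arbitrarily small right-neighborhoods of $\bar\sigma_\theta$ to an inequality at $\bar\sigma_\theta$ itself, and because $U$ is only assumed optional and right lsc (not regulated) while $Y$ is regulated, one has to reconcile two different kinds of one-sided limits, exploiting $\liminf_{r\downarrow\bar\sigma_\theta}\underrightarrow{U}_r\ge U_{\bar\sigma_\theta}$ together with $\lim_{r\downarrow\bar\sigma_\theta,\,r>\bar\sigma_\theta}Y_{r-}=Y_{\bar\sigma_\theta+}$ in a single coherent passage to the limit.
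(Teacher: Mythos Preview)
Your proof is correct and follows essentially the same route as the paper for the supermartingale/submartingale claim and for the identity $Y_{\sigma^*_\theta}=U^\xi_{\sigma^*_\theta}$ (resp.\ $Y_{\tau^*_\theta}=L^\xi_{\tau^*_\theta}$). The only genuine divergence is in establishing $Y_{\bar\sigma_\theta}=U^\xi_{\bar\sigma_\theta}$. You proceed \emph{directly}, extracting from the minimality conditions a sequence $r_n\downarrow\bar\sigma_\theta$ along which $Y$ touches $U$ (either pointwise via $\Delta^+R^-$ jumps or in left limits via the support of $dR^{-,*}$), and then passing to the limit; this works once one checks $\liminf_{r\downarrow\bar\sigma_\theta}\underrightarrow{U}_r\ge U_{\bar\sigma_\theta}$ (immediate from right lsc of $U$) and $\lim_{r\downarrow\bar\sigma_\theta,\,r>\bar\sigma_\theta}Y_{r-}=Y_{\bar\sigma_\theta+}$ (immediate from $Y$ regulated). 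The paper instead argues by \emph{contradiction}: assuming $P(Y_{\bar\sigma_\theta}<U^\xi_{\bar\sigma_\theta})>0$, it first gets $\Delta^+R^-_{\bar\sigma_\theta}=0$ from jump minimality, hence $\Delta^+Y_{\bar\sigma_\theta}=-\Delta^+R^+_{\bar\sigma_\theta}\le 0$, and then uses right lsc of $U$ together with right usc of $Y$ at $\bar\sigma_\theta$ to produce a whole right-neighborhood $[\bar\sigma_\theta,\bar\sigma_\theta+\delta]$ on which $\underrightarrow{U}^\xi_r-Y_{r-}>2\varepsilon$; since $R^{-,*}$ must strictly increase there, the integral $\int_{\bar\sigma_\theta}^{\bar\sigma_\theta+\delta}(\underrightarrow{U}^\xi_r-Y_{r-})\,dR^{-,*}_r$ is strictly positive, contradicting minimality. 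The contradiction argument sidesteps the double-liminf passage you correctly flag as the delicate point, at the (minor) cost of an implicit step: the growth of $R^-$ to the right of $\bar\sigma_\theta$ cannot come from right jumps on the separation interval, since any such jump would force $Y_r=U_r$ there. Your direct approach and the paper's contradiction approach are equivalent in strength; the paper's is slightly shorter because the uniform separation on a right-neighborhood handles both minimality mechanisms at once, while your approach treats them as separate sub-cases.
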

\begin{proof}
Ad 1). Assume  that   $R^{-,*}$ is continuous. By the definition of  $\bar{\sigma}_{\theta}$ we have that  $R^{-}_{\bar{\sigma}_{\theta}}=R^{-}_{\theta}$. Thus, for any  $a\ge0$,
\[
Y_t=Y_{\bar{\sigma}_{\theta}}+\int^{\bar{\sigma}_{\theta}}_t f(r,Y_r,Z_r)\,dr+R^+_{\bar{\sigma}_{\theta}}-R^+_t-\int^{\bar{\sigma}_{\theta}}_t Z_r\,dB_r,\quad t\in[\theta,\bar{\sigma}_{\theta}].
\]
By Proposition  \ref{nonlinprop},   $Y$ is an  $\mathbb{E}^f$-supermartingale on $[[\theta,\bar{\sigma}_{\theta}]]$.
We shall prove  that $Y_{\bar{\sigma}_{\theta}}=U^{\xi}_{\bar{\sigma}_{\theta}}$. Assume that  $\bar{\sigma}_{\theta}<T$ (in case  $\bar{\sigma}_{\theta}=T$ the desired equality is obvious). Suppose, by contradiction, that   $P(Y_{\bar{\sigma}_{\theta}}<U^{\xi}_{\bar{\sigma}_{\theta}})>0$. 
By the {\em minimality condition},  $\Delta^+R^-_{\bar{\sigma}_{\theta}}=0$ on  $\{Y_{\bar{\sigma}_{\theta}}<U^{\xi}_{\bar{\sigma}_{\theta}}\}$. 
Observe that  $\Delta^+Y_{\bar{\sigma}_{\theta}}=-(\Delta^+R^+_{\bar{\sigma}_{\theta}}-\Delta^+R^-_{\bar{\sigma}_{\theta}})=-\Delta^+R^+_{\bar{\sigma}_{\theta}}\le 0$, which  implies that $Y$ is right upper semicontinuous on $\{Y_{\bar{\sigma}_{\theta}}<U^{\xi}_{\bar{\sigma}_{\theta}}\}$.
Let  $a\in\mathbb R$ and $\varepsilon>0$ (depending on  $\omega\in\Omega$) be such that  $U^{\xi}_{\bar{\sigma}_{\theta}}>a+\varepsilon$ 
and $Y_{\bar{\sigma}_{\theta}}<a-\varepsilon$. Since $Y$ is right upper semicontinuous on $\{Y_{\bar{\sigma}_{\theta}}<U^{\xi}_{\bar{\sigma}_{\theta}}\}$, and $U$ is right lower semicontinuous, there exists  $\delta>0$ (depending on  $\omega\in\Omega$) such that  $U^{\xi}_{\bar{\sigma}_{\theta}+s}>a+\varepsilon$ and $Y_{\bar{\sigma}_{\theta}+s}<a-\varepsilon,\, s\in [0,\delta]$. 
Furthermore, from the definition of $\bar{\sigma}_{\theta}$ we have  $R^{-,*}_{\bar{\sigma}_{\theta}+\delta}>R^{-,*}_{\bar{\sigma}_{\theta}}$. 
Consequently, on the set $\{Y_{\bar{\sigma}_{\theta}}<U^{\xi}_{\bar{\sigma}_{\theta}}\}$ the following holds
\[
\int^{\bar{\sigma}_{\theta}+\delta}_{\bar{\sigma}_{\theta}}(\underrightarrow{U}^{\xi}_r-Y_{r-})\,dR^{-,*}_r>2\varepsilon(R^{-,*}_{\bar{\sigma}_{\theta}+\delta}-R^{-,*}_{\bar{\sigma}_{\theta}})>0.
\]
This contradicts the {\em minimality condition}. 

What is left is to show that  $Y_{\sigma^*_{\theta}}=U^{\xi}_{\sigma^*_{\theta}}$. 
In case  $\sigma^*_{\theta}=T$ the  equation follows at once. Suppose that  $\sigma^*_{\theta}<T$.
If  $\Delta^+R^-_{\sigma^*_{\theta}}(\omega)>0$, then  by the very definition of a solution to RBSDE, we have 
$Y_{\sigma^*_{\theta}}(\omega)=U^{\xi}_{\sigma^*_{\theta}}(\omega)$. 
Suppose that  $\Delta^+R^-_{\sigma^*_{\theta}}(\omega)=0$. 
Observe that 
\[
\Delta^+Y_{\sigma^*_{\theta}}(\omega)=-(\Delta^+_{\sigma^*_{\theta}}R^+_{\sigma^*_{\theta}}(\omega)-\Delta^+R^-_{\sigma^*_{\theta}}(\omega))=-\Delta^+_{\sigma^*_{\theta}}R^+_{\sigma^*_{\theta}}(\omega)\le 0. 
\]
Thus, $Y_{\sigma^*_{\theta}+}(\omega)\le Y_{\sigma^*_{\theta}}(\omega)$. 
By the definition of  $\sigma^*_{\theta}$ there exists a non-increasing sequence  $(t_n(\omega))_{n\ge 1}$ 
such that  $t_n(\omega)\searrow\sigma^*_{\theta}(\omega)$ and $Y_{t_n}(\omega)=U^{\xi}_{t_n}(\omega)$. 
Letting $n\to \infty$ and using right lower semicontinuity of  $U$ we find that $Y_{\sigma^*_{\theta}+}(\omega)\ge U^{\xi}_{\sigma^*_{\theta}}(\omega)$, which combined  with  $Y_{\sigma^*_{\theta}+}(\omega)\le Y_{\sigma^*_{\theta}}(\omega)$ gives the result.

Ad A2). The case when  $R^{+,*}$ is supposed to be continuous runs analogously. 
\end{proof}

\begin{corollary}\label{luty06}
Under assumptions of  Proposition \ref{dyn23pr} we have that continuity of  $R^{-,*}$ (resp. $R^{+,*}$)  implies $\sigma^*_{\theta}\le\bar{\sigma}_{\theta}$ (resp. $\tau^*_{\theta}\le\bar{\tau}_{\theta}$).
\end{corollary}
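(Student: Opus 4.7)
The plan is to read off the corollary as an essentially immediate consequence of Proposition \ref{dyn23pr}. Let me treat the case where $R^{-,*}$ is continuous; the other case is symmetric.

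First I would observe that, by definition, $\bar{\sigma}_{\theta}\ge\theta$ and $\bar{\sigma}_{\theta}\le T$. Proposition \ref{dyn23pr} part 1) asserts precisely that, under continuity of $R^{-,*}$, one has $Y_{\bar{\sigma}_{\theta}}=U^{\xi}_{\bar{\sigma}_{\theta}}$ pointwise (after redefining on a $P$-null set if needed). In other words, for $P$-a.e.\ $\omega$ the value $\bar{\sigma}_{\theta}(\omega)$ belongs to the set
\[
\{t\in[\theta(\omega),T]:\,Y_t(\omega)=U^{\xi}_t(\omega)\}\cup\{T\},
\]
where the union with $\{T\}$ accounts for the truncation appearing in the definition of $\sigma^*_{\theta}$.

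Since the infimum of a non-empty set is bounded above by any of its elements, this immediately yields $\sigma^*_{\theta}(\omega)\le\bar{\sigma}_{\theta}(\omega)$ in the case $\bar{\sigma}_{\theta}(\omega)<T$, and the inequality is trivial when $\bar{\sigma}_{\theta}(\omega)=T$, since then $\sigma^*_{\theta}(\omega)\le T=\bar{\sigma}_{\theta}(\omega)$. Thus $\sigma^*_{\theta}\le\bar{\sigma}_{\theta}$ $P$-a.s.

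The symmetric claim, that continuity of $R^{+,*}$ implies $\tau^*_{\theta}\le\bar{\tau}_{\theta}$, follows by the same argument invoking part 2) of Proposition \ref{dyn23pr} in place of part 1), replacing $U^{\xi}$ by $L^{\xi}$ and $R^{-}$ by $R^{+}$. There is no real obstacle here: the whole content of the corollary has already been absorbed into the identities \eqref{dyn16}--\eqref{dyn17} of Proposition \ref{dyn23pr}, and this corollary is only a notational reformulation in terms of hitting times.
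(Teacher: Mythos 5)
Your argument is correct and is exactly the (implicit) reasoning behind the corollary in the paper, which states it without proof as an immediate consequence of \eqref{dyn16}--\eqref{dyn17}: since $\theta\le\bar{\sigma}_{\theta}\le T$ and $Y_{\bar{\sigma}_{\theta}}=U^{\xi}_{\bar{\sigma}_{\theta}}$, the time $\bar{\sigma}_{\theta}$ lies in the set whose truncated infimum defines $\sigma^*_{\theta}$, whence $\sigma^*_{\theta}\le\bar{\sigma}_{\theta}$, and symmetrically for $\tau^*_{\theta}$. Nothing is missing.
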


\begin{proposition}\label{dyn21}
Let  $(Y,Z,R)$ be a solution to \textnormal{RBSDE}$^T(\xi,f,L,U)$. If  $L$ (resp. $U$) is left upper semicontinuous (resp. left lower semicontinuous), then   $R^{+,*}$ (resp. $R^{-,*}$) is continuous. 
\end{proposition}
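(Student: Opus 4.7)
I will prove the first assertion; the second one is symmetric. The central input is the minimality condition
\[
\int^T_0(Y_{r-}-\overrightarrow{L}_r)\,dR^{+,*}_r=0,
\]
together with the fact that, when $L$ is left upper semicontinuous, a hypothetical left jump of $R^{+,*}$ at time $r$ forces $Y_r<L_r$, in contradiction with $L\le Y$.

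First, I will record the elementary identity $\Delta^-R^{+,*}_r=\Delta^-R^+_r$ for every $r\in(0,T]$; this follows directly from the definition $R^{+,*}_t=R^+_t-\sum_{0\le s<t}\Delta^+R^+_s$, since the subtracted sum is left-continuous in $t$. Next, using that $Y$ is regulated and satisfies $Y\ge L$, for any sequence $s_n\uparrow r$ one has $Y_{s_n}\ge L_{s_n}$, hence $Y_{r-}=\lim_n Y_{s_n}\ge\limsup_n L_{s_n}$; taking the sup over such sequences yields $Y_{r-}\ge\overrightarrow L_r$. In particular the integrand above is nonnegative, so it must vanish $dR^{+,*}$-a.e., and at every atom of $dR^{+,*}$ one has $Y_{r-}=\overrightarrow L_r$.

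Now suppose by contradiction that $\Delta^-R^+_r>0$ at some time $r\in(0,T]$. By the previous step, $Y_{r-}=\overrightarrow L_r$, and by left upper semicontinuity of $L$, $\overrightarrow L_r\le L_r$, hence $Y_{r-}\le L_r$. On the other hand, the equation (d) in the definition of a solution, combined with continuity of the Brownian stochastic integral, yields
\[
\Delta^-Y_r=-\Delta^-R_r=-\Delta^-R^+_r+\Delta^-R^-_r.
\]
The Jordan decomposition makes $dR^+$ and $dR^-$ mutually singular, so their atoms are disjoint; since $\Delta^-R^+_r>0$ we get $\Delta^-R^-_r=0$, and therefore $Y_r=Y_{r-}-\Delta^-R^+_r<Y_{r-}\le L_r$, contradicting $Y_r\ge L_r$. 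Thus $\Delta^-R^+_r=0$ for every $r$, which by the identity of Step~1 means $R^{+,*}$ has no left jumps; being c\`adl\`ag with only possible left jumps, $R^{+,*}$ is continuous.

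The argument for the second statement is obtained by interchanging the roles of $L$ and $U$, of $R^+$ and $R^-$, and of $\overrightarrow L$ and $\underrightarrow U$, using the symmetric minimality condition $\int_0^T(\underrightarrow U_r-Y_{r-})\,dR^{-,*}_r=0$ and the bound $Y_{r-}\le\underrightarrow U_r$. The only step that requires any care is Step~1 (checking that the "starred" process has exactly the same left jumps as the original), but once that is in hand the rest of the proof is essentially a one-line jump analysis driven by the minimality condition; no delicate analytic estimate is needed.
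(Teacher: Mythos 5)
Your proof is correct and follows essentially the same route as the paper's: both arguments combine the minimality condition (which forces $Y_{r-}=\overrightarrow{L}_r$ at any atom of $dR^{+,*}$), the left upper semicontinuity of $L$ (giving $\overrightarrow{L}_r\le L_r\le Y_r$), the mutual singularity of $dR^+$ and $dR^-$, and the continuity of the martingale part to rule out a left jump of $R^{+,*}$. The only difference is presentational — the paper quantifies over predictable stopping times and the indicator sets $D,D'$, whereas you argue pathwise at a fixed jump time — and your preliminary observations ($\Delta^-R^{+,*}=\Delta^-R^+$ and $Y_{r-}\ge\overrightarrow{L}_r$) are correct and implicitly used by the paper as well.
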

\begin{proof}
Let  $\tau\in\mathcal{T}$ be predictable.  We shall prove that  $\Delta^-R^{+,*}_{\tau}=0$. 
We have 
\begin{equation}\label{dyn19}
\begin{split}
\Delta Y_{\tau}=-\Delta R^{+,*}_{\tau}+\Delta R^{-,*}_{\tau}=-\Delta R^{+,*}_{\tau}\mathbf{1}_{\{Y_{\tau-}=\overrightarrow{L}_{\tau}\}\cap D}
+\Delta R^{-,*}_{\tau}\mathbf{1}_{\{Y_{\tau-}=\underrightarrow{U}_{\tau}\}\cap D'},
\end{split}
\end{equation}
where  $D:=\{\Delta R^{+,*}_{\tau}>0\}$ and  $D':=\{\Delta R^{-,*}_{\tau}>0\}$. 
Since $dR^+\perp dR^-$,  $D\cap D'=\emptyset$. Thus, on the set  $D$, $\Delta Y_\tau\le 0.$
From this and the regularity assumption on $L$, $\overrightarrow{L}_{\tau}\le L_{\tau}\le Y_{\tau}\le Y_{\tau-}$ on $D$.
Consequently, $\Delta Y_\tau= 0$ on  $\{Y_{\tau-}=\overrightarrow{L}_{\tau}\}\cap D$. This combined with  \eqref{dyn19} implies $\Delta^-R^{+,*}_{\tau}=0$. Since the last inequality holds for any predictable $\tau\in\mathcal{T}$, we deduce that  $R^{+,*}$ is continuous. 
The similar reasoning may be applied to  $U$.
\end{proof}

\begin{theorem}\label{dyn26}
Suppose that  $L$ is upper semicontinuous and $U$ is lower semicontinuous. 
Let  $(Y,Z,R)$ be a solution to \textnormal{RBSDE}$^T(\xi,f,L,U)$.
Then for any  $\theta\in\mathcal{T}$ couples  \eqref{dyn14} and  \eqref{dyn15} are saddle points at  $\theta$ for the nonlinear Dynkin game
with the payoff  function \eqref{dyn271}.
\end{theorem}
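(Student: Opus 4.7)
The plan is to reduce the saddle-point property of both candidate pairs to the $\mathbb{E}^f$-sub/super-martingale structure of $Y$ provided by Proposition \ref{dyn23pr}, combined with the barrier-touching identities established there. First, I would observe that since $L$ is upper semicontinuous and $U$ is lower semicontinuous on both sides, Proposition \ref{dyn21} forces continuity of both $R^{+,*}$ and $R^{-,*}$. Hence both parts of Proposition \ref{dyn23pr} apply: $Y$ is an $\mathbb{E}^f$-submartingale on $[[\theta,\bar\tau_\theta]]$ and an $\mathbb{E}^f$-supermartingale on $[[\theta,\bar\sigma_\theta]]$, and one has the equalities $Y_{\tau^*_\theta}=L^\xi_{\tau^*_\theta}$, $Y_{\bar\tau_\theta}=L^\xi_{\bar\tau_\theta}$, $Y_{\sigma^*_\theta}=U^\xi_{\sigma^*_\theta}$, $Y_{\bar\sigma_\theta}=U^\xi_{\bar\sigma_\theta}$. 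Corollary \ref{luty06} additionally gives $\tau^*_\theta\le\bar\tau_\theta$ and $\sigma^*_\theta\le\bar\sigma_\theta$, so the submartingale (resp.\ supermartingale) property transfers to $[[\theta,\tau^*_\theta]]$ (resp.\ $[[\theta,\sigma^*_\theta]]$).

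Next, I would verify that $(\tau^*_\theta,\sigma^*_\theta)$ is a saddle at $\theta$. On $[[\theta,\tau^*_\theta\wedge\sigma^*_\theta]]$ the process $Y$ is simultaneously an $\mathbb{E}^f$-sub- and supermartingale, hence an $\mathbb{E}^f$-martingale; consequently $Y_\theta=\mathbb{E}^f_{\theta,\tau^*_\theta\wedge\sigma^*_\theta}(Y_{\tau^*_\theta\wedge\sigma^*_\theta})$, and a case-by-case inspection (using $Y_{\tau^*_\theta}=L^\xi_{\tau^*_\theta}$ on $\{\tau^*_\theta\le\sigma^*_\theta\}$, $Y_{\sigma^*_\theta}=U^\xi_{\sigma^*_\theta}$ on $\{\sigma^*_\theta<\tau^*_\theta\}$, and $Y_T=\xi$) identifies $Y_{\tau^*_\theta\wedge\sigma^*_\theta}$ with $J_0(\tau^*_\theta,\sigma^*_\theta)$, giving $Y_\theta=\mathbb{E}^f_{\theta,\tau^*_\theta\wedge\sigma^*_\theta}J_0(\tau^*_\theta,\sigma^*_\theta)$. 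For arbitrary $\sigma\in\mathcal{T}_\theta$, the submartingale property on $[[\theta,\tau^*_\theta\wedge\sigma]]\subset[[\theta,\tau^*_\theta]]$ yields $Y_\theta\le\mathbb{E}^f_{\theta,\tau^*_\theta\wedge\sigma}(Y_{\tau^*_\theta\wedge\sigma})$; combining $Y_{\tau^*_\theta}=L^\xi_{\tau^*_\theta}$ on $\{\tau^*_\theta\le\sigma\}$ with $Y_\sigma\le U_\sigma$ on $\{\sigma<\tau^*_\theta\}$ shows $Y_{\tau^*_\theta\wedge\sigma}\le J_0(\tau^*_\theta,\sigma)$, and the monotonicity of $\mathbb{E}^f$ in Proposition \ref{nonlinprop}(ii) finishes the upper bound. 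The lower bound $\mathbb{E}^f_{\theta,\tau\wedge\sigma^*_\theta}J_0(\tau,\sigma^*_\theta)\le Y_\theta$ for $\tau\in\mathcal{T}_\theta$ is obtained by the symmetric argument based on supermartingality on $[[\theta,\tau\wedge\sigma^*_\theta]]$, the equality $Y_{\sigma^*_\theta}=U^\xi_{\sigma^*_\theta}$, and $Y_\tau\ge L_\tau$.

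The same scheme yields the saddle-point property of $(\bar\tau_\theta,\bar\sigma_\theta)$, the only change being the substitution of $\bar\tau_\theta,\bar\sigma_\theta$ for $\tau^*_\theta,\sigma^*_\theta$ and the use of the corresponding identities $Y_{\bar\tau_\theta}=L^\xi_{\bar\tau_\theta}$, $Y_{\bar\sigma_\theta}=U^\xi_{\bar\sigma_\theta}$; note that $\bar\tau_\theta\wedge\bar\sigma_\theta$ still lies inside both the sub- and supermartingale intervals, so $Y$ is again an $\mathbb{E}^f$-martingale on $[[\theta,\bar\tau_\theta\wedge\bar\sigma_\theta]]$. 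The single genuinely delicate step is the terminal case analysis required to pass from $Y_{\tau^*_\theta\wedge\sigma}$ (or its analog) to the piecewise-defined $J_0$: on the coincidence set $\{\tau^*_\theta=\sigma^*_\theta\}$ the relations $L\le Y\le U$ together with the two barrier-touching equalities force $L=U$ at that instant, and the event $\{\tau^*_\theta=\sigma^*_\theta=T\}$ is absorbed by the convention $L_T=U_T=\xi$ built into $L^\xi,U^\xi$.
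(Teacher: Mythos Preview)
Your proof is correct and follows essentially the same route as the paper: Proposition~\ref{dyn21} gives continuity of $R^{\pm,*}$, Proposition~\ref{dyn23pr} together with Corollary~\ref{luty06} supplies the $\mathbb{E}^f$-sub/supermartingale structure and the barrier-touching identities, and monotonicity of $\mathbb{E}^f$ yields the saddle-point inequalities. The only cosmetic difference is that you obtain the equality $Y_\theta=\mathbb{E}^f_{\theta,\tau^*_\theta\wedge\sigma^*_\theta}J_0(\tau^*_\theta,\sigma^*_\theta)$ directly from the $\mathbb{E}^f$-martingale property on $[[\theta,\tau^*_\theta\wedge\sigma^*_\theta]]$, whereas the paper deduces it by specializing the two one-sided inequalities to $\tau=\tau^*_\theta$, $\sigma=\sigma^*_\theta$.
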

\begin{proof}
Let  $\theta\in\mathcal{T}$. By Theorem  \ref{dyn20} $Y_{\theta}=\overline{V}_0(\theta)=\underline{V}_0(\theta)$. 
By Proposition  \ref{dyn21}   $R^{+,*}$, $R^{-,*}$ are continuous. Let  $\tau\in\mathcal T_\theta$.
Since  $\sigma^*_{\theta}\le\bar{\sigma}_{\theta}$ (see Corollary  \ref{luty06}), by Proposition \ref{dyn23pr} process $Y$ is an  $\mathbb{E}^f$-supermartingale on  $[[\theta,\tau\wedge\sigma^*_{\theta}]]$. Therefore,
\begin{equation}\label{dyn24}
Y_{\theta}\ge\mathbb{E}^f_{\theta,\tau\wedge\sigma^*_{\theta}}[Y_{\tau\wedge\sigma^*_{\theta}}].
\end{equation}
 Since  $Y\ge L$ and  $Y_{\sigma^*_{\theta}}=U_{\sigma^*_{\theta}}$ (see  Proposition  \ref{dyn23pr}), we also have 
 \[
 Y_{\tau\wedge\sigma^*_{\theta}}=Y_{\tau}\mathbf{1}_{\{\tau\le\sigma^*_{\theta}\}}+Y_{\sigma^*_{\theta}}\mathbf{1}_{\{\sigma^*_{\theta}<\tau\}}\ge L_{\tau}\mathbf{1}_{\{\tau\le\sigma^*_{\theta}\}}+U_{\sigma^*_{\theta}}\mathbf{1}_{\{\sigma^*_{\theta}<\tau\}}=J_0(\tau,\sigma^*_{\theta}).
 \]
 Using  \eqref{dyn24} and the fact that $\mathbb{E}^f$ is a non-decreasing operator, we deduce that   $Y_{\theta}\ge\mathbb{E}^f_{\theta,\tau\wedge\sigma^*_{\theta}}J_0(\tau,\sigma^*_{\theta})$ for any $\tau\in\mathcal T_{\theta}$, in particular  $\mathbb E^f_{\theta,\tau^*_{\theta}\wedge\sigma^*_{\theta}}J_0(\tau^*_{\theta},\sigma^*_{\theta})\le Y_{\theta}$. In the similar  way we arrive at  $Y_{\theta}\le\mathbb{E}^f_{\theta,\tau^*_{\theta}\wedge\sigma}J_0(\tau^*_\theta,\sigma)$ for any $\sigma\in\mathcal{T}_{\theta}$, and so $Y_{\theta}\le \mathbb E^f_{\theta,\tau^*_{\theta}\wedge\sigma^*_{\theta}}J_0(\tau^*_{\theta},\sigma^*_{\theta})$. Consequently,  $Y_{\theta}=\mathbb E^f_{\theta,\tau^*_{\theta}\wedge\sigma^*_{\theta}}J(\tau^*_{\theta},\sigma^*_{\theta})$ and  $(\tau^*_{\theta},\sigma^*_{\theta})$ is a saddle point at  $\theta$. Analogously, one  shows,  by using Proposition  \ref{dyn23pr}, that  $(\bar{\tau}_{\theta},\bar{\sigma}_{\theta})$ is a saddle point at  $\theta$.
\end{proof}

\section{Existence result}\label{roz9}

In the whole section, we assume   that $L,U$ are $\mathbb{F}$-optional processes of class (D).

Let  us consider the following assumption, which is called in the literature  {\em weak Mokobodzki's condition}.

\begin{enumerate}
\item[(WM)] There exists a  semimartingale $X$ such that $L\le X\le U$.
\end{enumerate}

\begin{proposition}
Assume that $L, U$ are left-limited, and 
\begin{equation}
\label{eq.lus}
 \overleftarrow{L}_t<\underleftarrow{U}_t,\quad L_{t-}<U_{t-},\quad t\in [0,T].
\end{equation}
Then weak Mokobodzki's condition \textnormal{(WM)} holds for $L,U$.
\end{proposition}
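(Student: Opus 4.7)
The approach is to construct $X$ as a progressively measurable, làdlàg step function taking rational values with $L \le X \le U$ on $[0,T]$ and finite variation on each path; such an $X$ belongs to $\mathcal V_{\mathbb F}$ and is therefore a semimartingale in the sense used throughout the paper. Pathwise, fix $\omega$. The three strict separations in \eqref{eq.lus} supply, at each $t \in [0,T]$: a rational $\bar q_t$ with $L_t(\omega) < \bar q_t < U_t(\omega)$; a rational $q_t^-$ and width $\delta_t^- > 0$ such that $L_s(\omega) < q_t^- < U_s(\omega)$ on $(t - \delta_t^-, t)$, coming from $\overrightarrow L_t < \underrightarrow U_t$; and a rational $q_t^+$ with width $\delta_t^+ > 0$ so that $L_s(\omega) < q_t^+ < U_s(\omega)$ on $(t, t + \delta_t^+)$, coming from $\overleftarrow L_t < \underleftarrow U_t$. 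The open intervals $(t - \delta_t^-, t + \delta_t^+)$ cover $[0,T]$, and by compactness a finite subcover yields a piecewise-constant function between $L(\omega)$ and $U(\omega)$.

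To upgrade this to a jointly measurable construction, fix an enumeration $(q_k)_{k \ge 1}$ of $\mathbb Q$ and iterate via stopping times: set $\sigma_0 := 0$ and, given a stopping time $\sigma_n$ with $\sigma_n < T$, let $\bar a_n$ be the smallest-index rational in $(L_{\sigma_n}, U_{\sigma_n})$ and $a_n$ the smallest-index rational in $(\overleftarrow L_{\sigma_n}, \underleftarrow U_{\sigma_n})$; both are $\mathcal F_{\sigma_n}$-measurable. Define
\[
\sigma_{n+1} := \inf\{t > \sigma_n : L_t \ge a_n \text{ or } U_t \le a_n\} \wedge T,
\]
which is a stopping time by optionality of $L, U$ and the début theorem. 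Since $\overleftarrow L_{\sigma_n} < a_n < \underleftarrow U_{\sigma_n}$ forces $L_s < a_n < U_s$ on a right neighborhood $(\sigma_n, \sigma_n + \eta)$, one has $\sigma_{n+1} > \sigma_n$ strictly. Setting $X_{\sigma_n} := \bar a_n$ and $X_t := a_n$ for $t \in (\sigma_n, \sigma_{n+1})$ yields an adapted, làdlàg process satisfying $L \le X \le U$ on each stochastic interval $[\sigma_n, \sigma_{n+1})$.

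The main obstacle is to show that $(\sigma_n)_{n \ge 0}$ is a chain in the sense of Section~2, so that $X$ has finitely many jumps per path and hence finite variation. Arguing by contradiction, suppose $\sigma_n(\omega) \nearrow \sigma^*(\omega) \le T$ strictly on some $\omega$. Then $\overrightarrow L_{\sigma^*} < \underrightarrow U_{\sigma^*}$ furnishes $b \in \mathbb Q$ and $\delta > 0$ with $L_s(\omega) < b < U_s(\omega)$ on $(\sigma^* - \delta, \sigma^*)$; for $n$ large, $\sigma_n \in (\sigma^* - \delta, \sigma^*)$, and letting $s \downarrow \sigma_n$ inside this interval gives $\overleftarrow L_{\sigma_n} \le b \le \underleftarrow U_{\sigma_n}$. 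A careful analysis of the smallest-index selection rule, exploiting that only finitely many rationals have index no larger than that of $b$ and that $\overleftarrow L_{\sigma_n} < \underleftarrow U_{\sigma_n}$ strictly, shows that for some $n$ the chosen $a_n$ coincides with a rational separating $L$ from $U$ on an entire left neighborhood of $\sigma^*$, so that $\sigma_{n+1} \ge \sigma^*$, contradicting $\sigma_n < \sigma^*$ for all $n$. With termination in hand, $X$ is on each $\omega$ a finite step function with $L \le X \le U$, hence progressively measurable and of bounded variation, establishing (H7). The delicate point throughout is reconciling the discrete smallest-index selection with the limiting behavior of $\overleftarrow L, \underleftarrow U$ at the accumulation point $\sigma^*$.
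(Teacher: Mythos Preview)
Your overall strategy coincides with the paper's: build a piecewise-constant adapted process $X$ by iterating stopping times $\sigma_{n+1}:=\inf\{t>\sigma_n: L_t\ge a_n\text{ or }U_t\le a_n\}\wedge T$, and then argue that $(\sigma_n)$ is a chain so that $X$ has finite variation. The only substantive difference is the choice of level $a_n$. The paper takes the midpoint $a_n:=\tfrac12(\overleftarrow L_{\sigma_n}+\underleftarrow U_{\sigma_n})$; you take the smallest-index rational in $(\overleftarrow L_{\sigma_n},\underleftarrow U_{\sigma_n})$.

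This difference is not cosmetic: the midpoint is what makes the termination argument go through cleanly. If $\sigma_n\nearrow\sigma^*$ strictly, the definition of $\sigma_{n+1}$ forces (after a limit in the approximating sequence) $\overleftarrow L_{\sigma_{n-1}}+\underleftarrow U_{\sigma_{n-1}}\le 2\overleftarrow L_{\sigma_n}$ or $\ge 2\underleftarrow U_{\sigma_n}$; passing $n\to\infty$ yields $\underrightarrow U_{\sigma^*}\le \overrightarrow L_{\sigma^*}$, contradicting \eqref{eq.lus}. The algebra closes precisely because $a_n$ is expressed in terms of $\overleftarrow L$ and $\underleftarrow U$, the same quantities that reappear in the limit.

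By contrast, your termination sketch has a genuine gap. You correctly observe that for large $n$ the open interval $(\overleftarrow L_{\sigma_n},\underleftarrow U_{\sigma_n})$ contains a fixed rational $c\in(\overrightarrow L_{\sigma^*},\underrightarrow U_{\sigma^*})$, so the indices of the $a_n$ are bounded and some value $a^*$ recurs infinitely often. But you then assert without proof that ``for some $n$ the chosen $a_n$ coincides with a rational separating $L$ from $U$ on an entire left neighborhood of $\sigma^*$.'' This does not follow: the recurring value $a^*$ need not equal $c$, and a rational of smaller index than $c$ may well lie in $(\overleftarrow L_{\sigma_n},\underleftarrow U_{\sigma_n})$ for infinitely many $n$ while failing to separate $L$ from $U$ uniformly on $(\sigma_n,\sigma^*)$. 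The smallest-index rule gives you no control over which rational is selected beyond the index bound, and the index bound alone does not force $\sigma_{n+1}\ge\sigma^*$. Dropping the rational selection in favor of the midpoint removes the difficulty entirely and costs nothing, since finite variation does not require rational values.
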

\begin{proof}
We let  $\tau_0:=0$, and for $n\ge 1$,
\[
\tau_n:=\inf\{t>\tau_{n-1}:  (\overleftarrow{L}_{\tau_{n-1}}+\underleftarrow{U}_{\tau_{n-1}})<2L_t\text{ or }  (\overleftarrow{L}_{\tau_{n-1}}+\underleftarrow{U}_{\tau_{n-1}})>2U_t\}\wedge T.
\]
Obviously, $(\tau_n)$ is nondecreasing. Observe that by the definition of $\tau_n$ for each $\omega\in\Omega$
there exists a sequence $\{t^n_m\}$ such that $t^n_m\searrow\tau_n(\omega)$ and for all $m\in\mathbb{N}$
\begin{equation}
\label{eq.lu1}
\begin{split}
&\overleftarrow{L}_{\tau_{n-1}(\omega)}(\omega)+\underleftarrow{U}_{\tau_{n-1}(\omega)}(\omega)<2L_{t^n_m}(\omega)\quad\text{or}\\
&\overleftarrow{L}_{\tau_{n-1}(\omega)}(\omega)+\underleftarrow{U}_{\tau_{n-1}(\omega)}(\omega)>2U_{t^n_m}(\omega).
\end{split}
\end{equation}
Letting $m\to \infty$ yields 
\begin{equation}
\label{eq.lu2}
\begin{split}
&\overleftarrow{L}_{\tau_{n-1}(\omega)}(\omega)+\underleftarrow{U}_{\tau_{n-1}(\omega)}(\omega)\le 2\overleftarrow{L}_{\tau_{n}(\omega)}(\omega)\quad\text{or}\\
&\overleftarrow{L}_{\tau_{n-1}(\omega)}(\omega)+\underleftarrow{U}_{\tau_{n-1}(\omega)}(\omega)\ge 2\underleftarrow{U}_{\tau_{n}(\omega)}(\omega),\quad n\ge 1.
\end{split}
\end{equation}

{\bf Step 1}. We shall prove that $(\tau_n)$ is a chain.
First, note that
\begin{equation}
\label{eq.ul3}
P(\tau_{n-1}=\tau_n<T)=0,\quad n\ge 1.
\end{equation}
Indeed, suppose that for some $n\ge 1$, $P(\tau_{n-1}=\tau_n<T)>0$.
Let  $\omega\in\{\tau_{n-1}=\tau_n<T\}$. Then, 
by \eqref{eq.lu2}
\[
\underleftarrow{U}_{\tau_{n}(\omega)}(\omega)\le \overleftarrow{L}_{\tau_{n}(\omega)}(\omega).
\]
Therefore, $P(\underleftarrow{U}_{\tau_{n}}\le \overleftarrow{L}_{\tau_{n}})>0$,
which contradicts   \eqref{eq.lus}.  Suppose that  $(\tau_n)$ is not a chain. Then, according to \eqref{eq.ul3},
there must exist $\tau\in\mathcal{T}$ such that $\tau_n\nearrow\tau$ and $P(\bigcap^{\infty}_{n=1}\{\tau_n<\tau\})>0$. 
Let $\omega\in\bigcap^{\infty}_{n=1}\{\tau_n<\tau\}$. 
By the second inequality in \eqref{eq.lus} for any $\delta>0$ there exists $n_\delta\ge 1$  such that 
\[
\overleftarrow{L}_{\tau_{n-1}(\omega)}(\omega)\le L_{\tau(\omega)-}(\omega)+\delta, \quad
U_{\tau(\omega)-}(\omega)-\delta\le \underleftarrow{U}_{\tau_{n-1}(\omega)}(\omega),\quad n\ge n_\delta.
\]
Suppose that the first inequality in \eqref{eq.lu2} holds for infinitely many $n\ge 1$ (the proof in the second case is analogous).
Then, by the above inequalities, we conclude from  \eqref{eq.lu2} that 
\[
L_{\tau(\omega)-}(\omega)+U_{\tau(\omega)-}(\omega)-2\delta\le L_{\tau(\omega)-}(\omega)+\delta.
\]
Letting $\delta\searrow 0$, we  obtain that $U_{\tau(\omega)-}(\omega)\le  L_{\tau(\omega)-}(\omega)$.
Therefore, $P(U_{\tau-}\le  L_{\tau-})>0$,
which contradicts  \eqref{eq.lus}. Thus, $(\tau_n)$ is a chain.

{\bf Step 2.} We shall construct a semimartingale lying between barriers $L,U$.
Define
\[
X_t:= \frac12\sum_{n=1}^\infty\Big((\overleftarrow{L}_{\tau_{n-1}}+\underleftarrow{U}_{\tau_{n-1}})\mathbf1_{(\tau_{n-1},\tau_n)}(t)+(L_{\tau_{n-1}}+U_{\tau_{n-1}})\mathbf1_{\{\tau_{n-1}\}}(t)\big),
\]
for $t\in [0,T]$. Clearly, $L_t\le X_t\le U_t,\, t\in [0,T]$ and $X$ is $\mathbb F$-adapted. Since $(\tau_n)$
is a chain, we get that $X$ is of  finite variation, thus a semimartingale. This completes the proof.
\end{proof}

\begin{proposition}\label{19listopada1}
Assume that   $f_1,f_2$   satisfy \textnormal{(H1)--(H4)}, $\xi_1,\xi_2,f_1,f_2$ satisfy \textnormal{(H5)} with $p=2$, 
and $|f^1-f^2|(\cdot,Y^2,Z^2)\in L^{1,2}_{\mathbb{F}}(0,T)$.
 Let $(Y^i,Z^i,R^i)$ be a solution to \textnormal{RBSDE}$^T(\xi_i,f^i,L,U)$ such that $Y^i\in \mathcal{S}^2_{\mathbb{F}}(0,T)$, $i=1,2$. 
 Then there exists $c>0$, depending only on $T,\mu,\lambda$,  such that
\begin{equation}
\label{eq.stabr2}
\begin{split}
\|Y^1-Y^2\|_{\mathcal D^2(0,T)}\le c \big( \|\xi_1-\xi_2\|_{L^2}+\||f_1-f_2|(\cdot,Y^2,Z^2)\|_{L^{1,2}_{\mathbb{F}}(0,T)}\big).
\end{split}
\end{equation}
\end{proposition}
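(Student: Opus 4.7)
The plan is to apply an Itô-type formula to $|Y^1-Y^2|^2$ weighted by $e^{ar}$, use the monotonicity and Lipschitz hypotheses to absorb quadratic contributions, exploit the minimality condition to force the reflection cross-term to be non-positive, and finally close the estimate in $\mathcal{S}^2_{\mathbb{F}}(0,T)$, from which the $\mathcal{D}^2$ bound follows since $\|\cdot\|_{\mathcal{D}^2}\le\|\cdot\|_{\mathcal{S}^2}$. Write $\delta Y:=Y^1-Y^2$, $\delta Z:=Z^1-Z^2$, $\delta R:=R^1-R^2$, $\delta\xi:=\xi_1-\xi_2$, and $g_r:=|f_1-f_2|(r,Y^2_r,Z^2_r)$. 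Localize via the chain
\[
\tau_k:=\inf\Big\{t\ge 0:\int_0^t(|Z^1_r|^2+|Z^2_r|^2)\,dr\ge k\Big\}\wedge T
\]
and apply the generalized Itô formula to $e^{ar}|\delta Y_r|^2$ on $[\alpha,\tau_k]$ for an arbitrary $\alpha\in\mathcal{T}$ and a parameter $a\ge 0$ to be chosen.

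The structural heart of the argument is that the cross-term
\[
\mathcal{R}_k:=2\int_\alpha^{\tau_k}e^{ar}\delta Y_{r-}\,d(\delta R)^*_r+2\sum_{\alpha\le r<\tau_k}e^{ar}\delta Y_r\,\Delta^+(\delta R)_r
\]
is non-positive. Decomposing $\delta R=(R^{1,+}-R^{2,+})-(R^{1,-}-R^{2,-})$ and invoking the minimality condition for each solution, we have $Y^1_{r-}=\overrightarrow{L}_r$ on the support of $dR^{1,+,*}$, while $Y^2_{r-}\ge\overrightarrow{L}_r$ by passing to the left limit in $Y^2\ge L$; hence $\int\delta Y_{r-}\,dR^{1,+,*}_r=\int(\overrightarrow{L}_r-Y^2_{r-})\,dR^{1,+,*}_r\le 0$. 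The three analogous continuous pairings against $dR^{1,-,*},\,dR^{2,+,*},\,dR^{2,-,*}$ carry definite signs for the same reason, and combine to a non-positive contribution; the four right-jump counterparts are handled identically using the second line of the minimality condition together with $\overrightarrow{L}\le Y^i_-\le\underrightarrow{U}$ and $L\le Y^i\le U$.

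Next, use the decomposition
\[
f_1(r,Y^1_r,Z^1_r)-f_2(r,Y^2_r,Z^2_r)=[f_1(r,Y^1_r,Z^1_r)-f_1(r,Y^2_r,Z^1_r)]+[f_1(r,Y^2_r,Z^1_r)-f_1(r,Y^2_r,Z^2_r)]+[f_1-f_2](r,Y^2_r,Z^2_r)
\]
to bound the first piece by $\mu|\delta Y|^2$ via (H2), the second by $2\lambda^2|\delta Y|^2+\tfrac12|\delta Z|^2$ via (H1) and Young, and the third pointwise by $|\delta Y|g$. Choose $a>2\mu+4\lambda^2$ so the quadratic $|\delta Y|^2$-contributions are absorbed by the $a\int e^{ar}|\delta Y|^2\,dr$ term arising from Itô, while $\tfrac12|\delta Z|^2$ is absorbed on the left. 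Since $\int\delta Y_r\,\delta Z_r\,dB_r$ is a true $L^2$-martingale on $[0,\tau_k]$, taking $\mathcal{F}_\alpha$-conditional expectation, passing $k\to\infty$ (using $Y^i\in\mathcal{S}^2$ and dominated convergence, with $\delta Y_{\tau_k}\to\delta\xi$), yields
\[
\mathbb{E}|\delta Y_\alpha|^2+\mathbb{E}\int_\alpha^T|\delta Z_r|^2\,dr\le C\,\mathbb{E}|\delta\xi|^2+C\,\mathbb{E}\int_\alpha^T|\delta Y_r|\,g_r\,dr.
\]

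Finally, applying Burkholder-Davis-Gundy to $\sup_t\big|\int_t^Te^{ar}\delta Y_r\,\delta Z_r\,dB_r\big|$ and absorbing the resulting $\tfrac12\mathbb{E}\sup|\delta Y|^2$ term yields an inequality of the form
\[
\mathbb{E}\sup_{t\le T}|\delta Y_t|^2\le C\,\mathbb{E}|\delta\xi|^2+C\,\mathbb{E}\int_0^T|\delta Y_r|\,g_r\,dr+C\,\mathbb{E}\int_0^T|\delta Z_r|^2\,dr.
\]
The cross integral is further controlled via
\[
\mathbb{E}\int_0^T|\delta Y_r|\,g_r\,dr\le\mathbb{E}\Big[\sup_{t\le T}|\delta Y_t|\cdot\int_0^Tg_r\,dr\Big]\le\varepsilon\,\mathbb{E}\sup_{t\le T}|\delta Y_t|^2+C_\varepsilon\|g\|^2_{L^{1,2}_{\mathbb{F}}(0,T)},
\]
and, for $\varepsilon$ small enough, absorption delivers the $\mathcal{S}^2$ bound $\|\delta Y\|^2_{\mathcal{S}^2_{\mathbb{F}}(0,T)}\le C(\|\delta\xi\|_{L^2}^2+\|g\|^2_{L^{1,2}_{\mathbb{F}}(0,T)})$, which implies \eqref{eq.stabr2} since $\mathcal{D}^2\subset\mathcal{S}^2$ with $\|\cdot\|_{\mathcal{D}^2}\le\|\cdot\|_{\mathcal{S}^2}$. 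The principal obstacle is the correct application of Itô's formula to the regulated (not necessarily c\`adl\`ag) trajectories $Y^1,Y^2$, together with the bookkeeping that ensures all eight signed contributions to $\mathcal{R}_k$ are controlled by the minimality condition.
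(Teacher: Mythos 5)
Your proof is correct in substance, but it follows a genuinely different route from the paper's. The paper does not touch the reflection terms at all: it rewrites $(Y^1,Z^1,R^1)$ as a solution of RBSDE$^T(\xi_1,\tilde f,L,U)$ with $\tilde f(t,y,z):=f_1(t,Y^1_t,Z^1_t)-f_2(t,Y^1_t,Z^1_t)+f_2(t,y,z)$, invokes the extended Dynkin game representation of Theorem \ref{17sierpnia1} for both $Y^1_\theta$ and $Y^2_\theta$, bounds $|Y^1_\theta-Y^2_\theta|$ by the essential supremum over stopping systems of the difference of the two nonlinear expectations, and concludes by the plain BSDE stability estimate of Proposition \ref{nonlinprop}(vi). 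This yields the $\mathcal D^2$ bound pointwise in $\theta$ and sidesteps any It\^o calculus for the regulated processes $Y^i$ and any analysis of $R^1-R^2$, at the price of relying on the full machinery of Section \ref{sec9}. Your direct argument --- Gal'chouk--Lenglart formula for $e^{ar}|Y^1_r-Y^2_r|^2$, sign analysis of the eight reflection cross-terms via the minimality condition and $L\le Y^i\le U$ (which works precisely because both triples are reflected against the \emph{same} barriers), absorption via (H1)--(H2), localization along $\tau_k$, and a BDG step to close in $\mathcal S^2$ --- is the classical approach and is self-contained; it additionally produces bounds on $\|Z^1-Z^2\|_{\mathcal H^2}$ and on the stronger $\mathcal S^2$ norm, from which the $\mathcal D^2$ estimate follows. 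The two technical points you must get right, and which you correctly flag, are (i) the use of the It\^o formula for l\`adl\`ag optional semimartingales, with the right-jump sums $\sum\delta Y_r\,\Delta^+(\delta R)_r$ controlled by the second line of the minimality condition, and (ii) the fact that $Y^i_{r-}\ge\overrightarrow{L}_r$ and $Y^i_{r-}\le\underrightarrow{U}_r$ (obtained by passing to left limits in $L\le Y^i\le U$) combine with $Y^i_{r-}=\overrightarrow{L}_r$ $dR^{i,+,*}$-a.e.\ to give the definite signs. Both are standard in this setting, so I regard your proof as complete.
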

\begin{proof}
We let $J^i$ denote the right-hand side of \eqref{dyn27} but with $\xi$
replaced by $\xi_i,\, i=1,2$. Let
\[
\tilde f(t,y,z):= f_1(t,Y^1_t,Z^1_t)-f_2(t,Y^1_t,Z^1_t)+f_2(t,y,z),\quad t\in [0,T],\,y\in\mathbb R,\, z\in \mathbb R^d.
\]
Observe that $(Y^1,Z^1,R^1)$ is a solution to RBSDE$^T(\xi_1,\tilde f,L,U)$. By Theorem \ref{17sierpnia1}
\[
Y^1_\theta=\mathop{\mathrm{ess\,sup}}_{\tau\lfloor A\in\mathcal{U}_{\theta}}\mathop{\mathrm{ess\,inf}}_{\sigma\lfloor B\in\mathcal{U}_{\theta}}\mathbb{E}^{\tilde f}_{\theta,\tau\wedge\sigma}J^1(\tau\lfloor A,\sigma\lfloor B)
\]
and 
\[
Y^2_\theta=\mathop{\mathrm{ess\,sup}}_{\tau\lfloor A\in\mathcal{U}_{\theta}}\mathop{\mathrm{ess\,inf}}_{\sigma\lfloor B\in\mathcal{U}_{\theta}}\mathbb{E}^{f_2}_{\theta,\tau\wedge\sigma}J^2(\tau\lfloor A,\sigma\lfloor B).
\]
Hence
\begin{equation}
\label{eq.mm11}
|Y^1_\theta-Y^2_\theta|\le \mathop{\mathrm{ess\,sup}}_{\tau\lfloor A\in\mathcal{U}_{\theta}}\mathop{\mathrm{ess\,sup}}_{\sigma\lfloor B\in\mathcal{U}_{\theta}}
\Big|\mathbb{E}^{\tilde f}_{\theta,\tau\wedge\sigma}J^1(\tau\lfloor A,\sigma\lfloor B)-\mathbb{E}^{f_2}_{\theta,\tau\wedge\sigma}J^2(\tau\lfloor A,\sigma\lfloor B)\Big|.
\end{equation}
Applying Proposition \ref{nonlinprop}(vi) yields
\begin{align*}
\mathbb E\Big|\mathbb{E}^{\tilde f}_{\theta,\tau\wedge\sigma}J^1(\tau\lfloor A,\sigma\lfloor B)&-\mathbb{E}^{f_2}_{\theta,\tau\wedge\sigma}J^2(\tau\lfloor A,\sigma\lfloor B)\Big|^2
\\&\le  c\mathbb E\Big[|\xi_1-\xi_2|^2+\Big(\int^T_0|f_1-f_2|(r,Y^2_r,Z^2_r)\,dr\Big)^2\Big].
\end{align*}
Combining the last two inequalities gives at once the result. 
\end{proof}

\begin{theorem}
Assume that \textnormal{(H1)-(H4),(H7)} are in force. Suppose that  \textnormal{(H5)} is satisfied with  $p=1$.
Then there exists a  solution $(Y,Z,R)$ to \textnormal{RBSDE}$^T(\xi,f,L,U)$.
\end{theorem}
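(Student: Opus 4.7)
The plan is to combine the \emph{localization procedure} from \cite{kl} with the stability estimate of Proposition~\ref{19listopada1}, passing simultaneously from $L^1$ to $L^2$ data and from weak Mokobodzki's condition (H7) to its strong form (H6). First, fix a semimartingale $X$ with $L \le X \le U$ provided by (H7) and decompose $X = X_0 + M + V$ with $M$ a continuous local martingale and $V$ of finite variation. Define a chain $(\tau_k)_{k \ge 1}$ by
\[
\tau_k := \inf\{t \ge 0 : |V|_t + \langle M\rangle_t + |X_t| \ge k\} \wedge T.
\]
On each $[[0, \tau_k]]$ the stopped semimartingale $X^{\tau_k}$ is bounded with bounded total and quadratic variation, so Mokobodzki's condition (H6) with $p = 2$ holds there.

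Simultaneously I would approximate $\xi$ by $\xi^n \in L^2(\mathcal F_T)$ with $L_T \le \xi^n \le U_T$ and $\xi^n \to \xi$ in $L^1$ (obtainable by truncating $\xi - X_T$ at level $n$ and projecting back into $[L_T, U_T]$), and approximate $f(\cdot, 0, 0)$ by its level-$n$ truncation $f^n(\cdot,0,0) \in L^{1,2}_{\mathbb F}(0,T)$, preserving (H1)--(H4). For each pair $(n, k)$, Theorem~\ref{18listopada1} applied on $[[0, \tau_k]]$ with terminal $\eta^{n,k} := \xi^n \mathbf{1}_{\{\tau_k = T\}} + X_{\tau_k}\mathbf{1}_{\{\tau_k < T\}}$ (bounded and sandwiched between $L_{\tau_k}$ and $U_{\tau_k}$) yields a solution $(Y^{n,k}, Z^{n,k}, R^{n,k}) \in \mathcal S^2_{\mathbb F}(0, \tau_k) \times \mathcal H^2_{\mathbb F}(0, \tau_k) \times \mathcal V^2_{0, \mathbb F}(0, \tau_k)$. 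The sandwich $L \le Y^{n,k} \le U$ together with class (D) of $L, U$ provides a uniform (D)-bound on $Y^{n,k}$.

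The limits are taken in the order $n \to \infty$, then $k \to \infty$. Applying Proposition~\ref{19listopada1} on $[[0, \tau_k]]$ to pairs $(Y^{n,k}, Y^{m,k})$ gives Cauchy-ness in $\mathcal D^2$-norm as $n, m \to \infty$, producing a limit $(Y^k, Z^k, R^k)$ that solves RBSDE$^{\tau_k}(\eta^k, f, L, U)$ with $\eta^k := \xi \mathbf{1}_{\{\tau_k = T\}} + X_{\tau_k}\mathbf{1}_{\{\tau_k < T\}}$ and $Y^k$ of class (D). For the limit $k \to \infty$ I would use $P(\tau_k = T) \to 1$ so that $\eta^k \to \xi$ in $L^1$, and combine the chain property with an $L^1$-stability type argument (using the observation from the introduction that a triple solves RBSDE$^T$ iff it solves RBSDE$^{\tau_k}$ for each $k$) to extract the solution $(Y, Z, R)$ of RBSDE$^T(\xi, f, L, U)$ with $Y$ of class (D).

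The main obstacle will be the passage $k \to \infty$: the placeholder terminal values $X_{\tau_k}$ assigned on $\{\tau_k < T\}$ do not match the target $Y_{\tau_k}$, so the family $\{(Y^k, Z^k, R^k)\}_k$ does not paste directly. The remedy is a comparison-based consistency argument bounding $|Y^k - Y^{k+1}|$ on $[[0, \tau_k]]$ by a term driven by $|X_{\tau_k} - Y^{k+1}_{\tau_k}|$, combined with the uniform (D)-bound from the sandwich and dominated convergence, to force the discrepancy to vanish as $k \to \infty$. A secondary technical difficulty is passing the \emph{minimality condition} through the limit: it involves the singular and jump parts of $R^k$ together with the left- and right-regularised envelopes $\overrightarrow L, \underrightarrow U$ of the barriers, and must be handled via optional-section arguments applied on each $[[0, \tau_k]]$ separately.
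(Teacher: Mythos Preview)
Your overall architecture---localization via a chain built from the semimartingale $X$, combined with the $\mathcal D^2$-stability of Proposition~\ref{19listopada1}---is correct and matches the paper's philosophy. The gap is precisely where you flag it: the passage $k\to\infty$. Your proposed remedy does not close it. Bounding $|Y^k-Y^{k+1}|$ on $[[0,\tau_k]]$ by $|X_{\tau_k}-Y^{k+1}_{\tau_k}|$ via Proposition~\ref{19listopada1} would require this terminal discrepancy to vanish in $L^2$, but there is no mechanism forcing $Y^{k+1}_{\tau_k}$ close to the arbitrary placeholder $X_{\tau_k}$ on $\{\tau_k<T\}$; the two are unrelated except that both lie in $[L_{\tau_k},U_{\tau_k}]$. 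The chain property $P(\tau_k<T)\to 0$ is of no help here, since the comparison takes place on $[[0,\tau_k]]$ for fixed $k$. Consequently $\{Y^k\}$ need not be consistent and the pasting fails.

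The paper avoids this circularity by a different construction. Instead of solving localized problems with artificial terminal data, it builds approximants \emph{globally} on $[0,T]$: it introduces the Snell envelopes $\hat L\le L$ and $\hat U\ge U$ (which are genuine class~(D) super/submartingales on all of $[0,T]$), modifies the barriers to $L^n:=L\mathbf 1_{[0,\gamma_n]}+\hat L\mathbf 1_{(\gamma_n,T]}$ and $U^m:=U\mathbf 1_{[0,\gamma_m]}+\hat U\mathbf 1_{(\gamma_m,T]}$, and truncates the generator to $f_{n,m}:=(f\wedge n)\vee(-m)$. With these modifications the \emph{strong} Mokobodzki condition (H6*) holds globally, so Theorem~\ref{28wrzesnia1} yields solutions $(Y^{n,m},Z^{n,m},R^{n,m})$ of RBSDE$^T(\xi,f_{n,m},L^n,U^m)$ on the full interval with the true terminal value $\xi$. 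Comparison gives monotonicity of $Y^{n,m}$ in $n$ and $m$, so the limits $Y^m:=\sup_n Y^{n,m}$ and $Y:=\inf_m Y^m$ exist \emph{a priori} and are of class~(D). Only then does the paper invoke Proposition~\ref{19listopada1} on each $[[0,\gamma_k]]$, now with the self-consistent terminal condition $Y_{\gamma_k}$, to show that $Y$ coincides with the solution of RBSDE$^{\gamma_k}(Y_{\gamma_k},f,L,U)$. The pasting is then automatic by uniqueness. The key idea you are missing is to define the candidate limit $Y$ \emph{before} attempting any localized identification, via a monotone global scheme; this is what the barrier modification $L^n,U^m$ and the double truncation $f_{n,m}$ buy.
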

\begin{proof}
Let $X$ be the process appearing in (H7). Since $X$ is a special semimartingale, there exists a chain $(\hat\gamma_k)$
and processes   $H\in\mathcal{H}_{\mathbb{F}}(0,T)$ and $C\in\mathcal{V}_{\mathbb{F}}(0,T)$ such that $H\in\mathcal{H}^2_{\mathbb{F}}(0,\hat\gamma_k)$, $C\in\mathcal{V}^2_{\mathbb{F}}(0,\hat\gamma_k),\, k\ge 1$, and 
\[
X_t=X_0+C_t+\int_0^t H_r\,dB_r,\quad t\in[0,T].
\]
Let
\[
f_{n,m}(t,y,z)=\{f(t,y,z)\wedge n\}\vee (-m).
\]
Note that $f_{n,m}$ is nondecreasing with respect to $n$ and nonincreasing with respect to $m$. Moreover, $f_{n,m}(t,y,z)\nearrow f_{m}(t,y,z):=f(t,y,z)\vee (-m)$, when $n\to\infty$, and $f_m(t,y,z)\searrow f(t,y,z)$, when $m\to\infty$. By \cite[page 417]{DM1} there exist regulated processes  $\hat{L}$, $\hat{U}$  satisfying
\[
\hat{L}_{\alpha}=\mathop{\mathrm{ess\,inf}}_{\tau\in\mathcal{T}_{\alpha}}\mathbb{E}(L_{\tau}|\mathcal{F}_{\alpha}),\quad\hat{U}_{\alpha}=\mathop{\mathrm{ess\,sup}}_{\tau\in\mathcal{T}_{\alpha}}\mathbb{E}(U_{\tau}|\mathcal{F}_{\alpha}),\,\alpha\in\mathcal{T}.
\]
Furthermore, by \cite[Proposition 3.8]{KRzS}, $-\hat{L}$, $\hat{U}$ are supermartingales of class (D) on $[0,T]$. 
As a result, there exist processes $F,G\in\mathcal{H}_{\mathbb{F}}(0,T)$ and $A,D\in\mathcal{V}^{+,1}_{\mathbb{F}}(0,T)$ such that
\[
\hat{L}_t=\hat{L}_T-\int^T_t\,dA_r-\int^T_t F_r\,dB_r,\quad t\in[0,T].
\]
and
\[
\hat{U}_t=\hat{U}_T-\int^T_t\,dD_r-\int^T_t G_r\,dB_r,\quad t\in[0,T].
\]
Obviously, $\hat{L}\le L\le U\le\hat{U}$. Since $\hat{L}$ and $\hat{U}$ are of class (D), by (H4) there exists a chain $(\tau^1_k)$ on $[0,T]$ such that
\begin{equation}\label{7pazdziernika1}
\mathbb{E}\Big(\int^{\tau_k}_0|f(r,\hat{L}_r,0)|\,dr\Big)^2+\mathbb{E}\Big(\int^{\tau_k}_0|f(r,\hat{U}_r,0)|\,dr\Big)^2\le k.
\end{equation}
Moreover, let us consider  chain $(\tau^2_k)$ on $[0,T]$ such that $\hat{L},\hat{U}\in\mathcal{S}^2_{\mathbb{F}}(0,\tau^2_k)$,  $f(\cdot,0,0)\in L^{1,2}_{\mathbb{F}}(0,\tau_k^2)$, and   $A,D\in\mathcal{V}^2_{\mathbb{F}}(0,\tau^2_k),\, k\ge 1$.
We let  $\gamma_k:=\hat \gamma_k\wedge\tau^1_k\wedge\tau^2_k$ 
Define
\[
L^n_t=L_t\mathbf{1}_{\{t\le\gamma_n\}}+\hat{L}\mathbf{1}_{\{t>\gamma_n\}},\quad U^n_t=U_t\mathbf{1}_{\{t\le\gamma_n\}}+\hat{U}_t\mathbf{1}_{\{t>\gamma_n\}}.
\]
Note that
\begin{equation}\label{7pazdziernika2}
\hat{L}\le L^n\le L^{n+1}\le L\le U\le U^{n+1}\le U^n\le\hat{U},\quad n\ge 1.
\end{equation}
Moreover, $L^n\nearrow L$ and $U^n\searrow U$. Finally, we define
\[
X^{n,m}_t=X_t\mathbf{1}_{\{t\le\gamma_n\wedge\gamma_m\}}+\hat{L}_t\mathbf{1}_{\{t>\gamma_n\ge\gamma_m\}}+\hat{U}_t\mathbf{1}_{\{t>\gamma_m>\gamma_n\}}.
\]
Note that $L^n\le X^{n,m}\le U^m$ and $X^{n,m}$ is a difference of two supermartingales of class (D).
Therefore, by the definition of $f_{n,m}$, strong Mokobodzki's condition \textnormal{(H6*)} is satisfied with $L^n$, $U^m$, $X^{n,m}$ and $f_{n,m}$. By Theorem \ref{28wrzesnia1} there exists a unique solution $(Y^{n,m},Z^{n,m},R^{n,m})$ to \textnormal{RBSDE}$^T(\xi,f_{n,m},L^n,U^m)$ such that $Y^{n,m}$ is of class \textnormal{(D)}, $Z^{n,m}\in\mathcal{H}^q_{\mathbb{F}}(0,T)$, $q\in(0,1)$ and $R^{n,m}\in\mathcal{V}^1_{0,\mathbb{F}}(0,T)$. By \cite[Proposition 3.2, Lemma 3.3]{KRzS2} , $Y^{n,m}$ is nondecreasing with respect to $n$ and nonincreasing with respect to $m$. Let us define
\[
Y^m=\sup_{n\ge 1}Y^{n,m},\quad Y=\inf_{m\ge 1} Y^m.
\]
Obviously, $Y^m$ and $Y$ are of class (D). The remainder of the proof, we divide into two steps.

\textbf{Step 1.} We shall prove that or any $k\le m$, process $Y^m$ is the first component of a solution to \textnormal{RBSDE}$^{\gamma_k}(Y^m_{\gamma_k},f_m,L,U^m)$. Let $k\le  m\le n$. 
Since $\hat{L}\le Y^{n,m}\le \hat{U}$, we have  $Y^{n,m},Y^m\in\mathcal{S}^2_{\mathbb{F}}(0,\gamma_k)$.  
According to  Theorem \ref{18listopada1} - observe that  (H5), (H6) are satisfied with $L,U^m,f_m,Y^m,X$ and $p=2$ on $[[0,\gamma_k]]$ 
- there exists a solution  $(\tilde{Y}^{k,m},\tilde{Z}^{k,m},\tilde{R}^{k,m})$ to 
RBSDE$^{\gamma_k}(Y^m_{\gamma_k},f_m,L,U^m)$ such that $\tilde{Y}^{k,m}\in\mathcal{S}^2_{\mathbb{F}}(0,\gamma_k)$, 
$\tilde{Z}^{k,m}\in\mathcal{H}^2_{\mathbb{F}}(0,\gamma_k)$ and $\tilde{R}^{k,m}\in\mathcal{V}^2_{0,\mathbb{F}}(0,\gamma_k)$. 
We shall show that $Y^m=\tilde{Y}^{k,m}$ on $[[0,\gamma_k]]$. By Proposition \ref{19listopada1}
\begin{equation}\label{4grudnia1}
\begin{split}
&\|\tilde{Y}^{k,m}-Y^{n,m}\|^2_{\mathcal D^2(0,\gamma_k)}\le c\mathbb{E} \Big[\Big(\int^{\gamma_k}_0|f_m-f_{n,m}|(r,\tilde{Y}^{k,m}_r,\tilde{Z}^{k,m}_r)\,dr\Big)^2\Big]\\
&\quad+|\tilde{Y}^{k,m}_{\gamma_k}-Y^{n,m}_{\gamma_k}|^2= c\mathbb{E}\Big[|Y^m_{\gamma_k}-Y^{n,m}_{\gamma_k}|^2\\
&\quad+\Big(\int^{\gamma_k}_0|f(r,\tilde{Y}^{k,m}_r,\tilde{Z}^{k,m}_r)|\mathbf{1}_{\{f(r,\tilde{Y}^{k,m}_r,\tilde{Z}^{k,m}_r)>n\}}\,dr\Big)^2\Big].
\end{split}
\end{equation}
Observe that $0\le Y^m_{\gamma_k}-Y^{n,m}_{\gamma_k}\le Y^m_{\gamma_k}\in L^2(\Omega,\mathcal{F}_{\gamma_k})$
(the last assertion is a consequence of the fact that $Y^m\in\mathcal{S}^2_{\mathbb{F}}(0,\gamma_k)$).
Therefore, by the Lebesgue dominated  convergence theorem, the first term on the right-hand side of  \eqref{4grudnia1}
tends to zero as $n\to \infty$.
Note that, by the definition of $\gamma_k$, (H1), \eqref{7pazdziernika1} and Jensen's inequality
\begin{equation*}
\begin{split}
&\mathbb{E}\Big(\int^{\gamma_k}_0|f(r,\tilde{Y}^{k,m}_r,\tilde{Z}^{k,m}_r)|\,dr\Big)^2
\le \mathbb{E}\Big(\lambda\int^{\gamma_k}_0|\tilde{Z}^{k,m}_r|\,dr\Big)^2\\
&\quad\quad+\mathbb{E}\Big(\int^{\gamma_k}_0|f(r,\tilde{Y}^{k,m}_r,0)|\,dr\Big)^2\le T\lambda^2\mathbb{E}\int^{\gamma_k}_0|\tilde{Z}^{k,m}_r|^2\,dr
\\
&\quad\quad+\mathbb{E}\Big(\int^{\gamma_k}_0|f(r,\hat{L}_r,0)|+|f(r,\hat{U}_r,0)|\,dr\Big)^2\\
&\quad\le T\lambda^2\mathbb{E}\int^{\gamma_k}_0|\tilde{Z}^{k,m}_r|^2\,dr+T^2\cdot k^2<\infty.
\end{split}
\end{equation*}
Consequently, by the   Lebesgue dominated convergence theorem, the most right term in \eqref{4grudnia1}
tends to zero as $n\to \infty$. As a result, letting  $n\to\infty$ in \eqref{4grudnia1}, 
we obtain that $Y^{n,m}\to \tilde{Y}^{k,m}$ in $\mathcal D^2_{\mathbb{F}}(0,\gamma_k)$. 
This completes the proof of step 1.

\textbf{Step 2.} We shall prove that $Y$ is the first component of a solution to  \textnormal{RBSDE}$(\xi,f,L,U)$. Let $k\le m$. 
Since $\hat{L}\le Y^{m}\le \hat{U}$, we have that $Y^{m},Y\in\mathcal{S}^2_{\mathbb{F}}(0,\gamma_k)$. 
Observe that conditions  (H5) and (H6) are met by  $L,U,f, X$ on $[[0,\gamma_k]]$ with $p=2$.
Therefore, by   Theorem \ref{18listopada1},
there exists a solution  $(\tilde{Y}^k,\tilde{Z}^k,\tilde{R}^k)$  to RBSDE$^{\gamma_k}(Y_{\gamma_k},f,L,U)$ such that 
$\tilde{Y}^k\in\mathcal{S}^2_{\mathbb{F}}(0,\gamma_k)$, 
$\tilde{Z}^k\in\mathcal{H}^2_{\mathbb{F}}(0,\gamma_k)$, 
and $\tilde{R}^k\in\mathcal{V}^2_{0,\mathbb{F}}(0,\gamma_k)$. 
We shall show that $Y=\tilde{Y}^k$ on $[[0,\gamma_k]]$. By Proposition \ref{19listopada1}
\begin{equation}\label{4grudnia2}
\begin{split}
\|\tilde{Y}^k-Y^m\|_{\mathcal D^2(0,\gamma_k)}&\le C\mathbb{E}\Big[\Big(\int^{\gamma_k}_0|f(r,\tilde{Y}^k_r,\tilde{Z}^k_r)-f_m(r,\tilde{Y}^k_r,\tilde{Z}^k_r)|\,dr\Big)^2
\\+|Y_{\gamma_k}-Y^m_{\gamma_k}|^2\Big]&= C\mathbb{E}\Big[\Big(\int^{\gamma_k}_0|f(r,\tilde{Y}^k_r,\tilde{Z}^k_r)|
\mathbf{1}_{\{f(r,\tilde Y^k_r,\tilde Z^k_r)<-m\}}\,dr\Big)^2\\
&\quad+|Y_{\gamma_k}-Y^m_{\gamma_k}|^2\Big].
\end{split}
\end{equation}
By the Lebesgue dominated convergence theorem the first term on the right-hand side of \eqref{4grudnia2} tends to zero
when $m\to \infty$. By combining \eqref{7pazdziernika1}, the definition of $\gamma_k$,  condition (H1), and Jensen's inequality,
we conclude that
\begin{equation*}
\begin{split}
\mathbb{E}\Big(\int^{\gamma_k}_0&|f(r,\tilde{Y}^k_r,\tilde{Z}^k_r)|\,dr\Big)^2
\le \mathbb{E}\Big(\lambda\int^{\gamma_k}_0|\tilde{Z}^k_r|\,dr\Big)^2+\mathbb{E}\Big(\int^{\gamma_k}_0|f(r,\tilde{Y}^k_r,0)|\,dr\Big)^2\\
&\le T\lambda^2\mathbb{E}\int^{\gamma_k}_0|\tilde{Z}^k_r|^2\,dr
+\mathbb{E}\Big(\int^{\gamma_k}_0|f(r,\hat{L}_r,0)|+|f(r,\hat{U}_r,0)|\,dr\Big)^2\\
&\le T\lambda^2\mathbb{E}\int^{\gamma_k}_0|\tilde{Z}^k_r|^2\,dr+T^2\cdot k^2<\infty.
\end{split}
\end{equation*}
Therefore, by the Lebesgue dominated convergence theorem, the most right term in \eqref{4grudnia2}
tends to zero as $m\to \infty$. Consequently, letting $m\to \infty$ in \eqref{4grudnia2}, we deduce that 
 $Y^m\to \tilde{Y}^k$ in $\mathcal D^2_{\mathbb{F}}(0,\gamma_k)$. 
Hence, $\tilde{Y}^k=Y$ on $[0,\gamma_k],\, k\ge 1$. 
In other words, for any $k\ge 1$, process $Y$ is the first component of a solution to RBSDE$^{\gamma_k}(Y_{\gamma_k},f,L,U)$.
This in turn implies, by using the uniqueness argument (see Theorem \ref{18listopada1}), that $\tilde Z^k=\tilde Z^{k+1}$, and $\tilde R^k=\tilde R^{k+1}$ on $[0,\gamma_k],\, k\ge 1$. 
With the aid of these properties, one easily checks that the triple $(Y,M,R)$ is a solution to RBSDE$^{\gamma_k}(Y_{\gamma_k},f,L,U)$ for each $k\ge 1$, where
\[
Z_t:=\sum_{k=0}^\infty \tilde Z^k_t\mathbf1_{(\gamma_k,\gamma_{k+1}]}(t),\quad R_t:=\sum_{k=0}^\infty \tilde R^k_t\mathbf1_{(\gamma_k,\gamma_{k+1}]}(t).
\]
This combined with the fact that $(\gamma_k)$ is a chain implies that $(Y,Z,R)$ is a solution to RBSDE$^T(\xi,f,L,U)$.

\end{proof}

\subsection*{Acknowledgements}
{\small T. Klimsiak is supported by Polish National Science Centre: Grant No. 2017/25/B/ST1/00878. M. Rzymowski acknowledges the support of the Polish National Science Centre: Grant No. 2018/31/N/ST1/00417.}


\end{document}